\definecolor{darkgreen}{rgb}{0,.6,0}
\definecolor{lightg}{gray}{0.8}
\numberwithin{equation}{section}\theoremstyle{definition}
 \newtheorem{Theorem}[equation]{Theorem}
 \newtheorem{Prop}[equation]{Proposition}
 \newtheorem{Lemma}[equation]{Lemma}
 \newtheorem{Cor}[equation]{Corollary}
 \newtheorem{Notation}[equation]{Notation}
 \newtheorem{Defn}[equation]{Definition}
 \newtheorem{Example}[equation]{Example}
 \newtheorem{Remark}[equation]{Remark}
\def\enumerate{\begingroup\ifnum\@enumdepth>3\@toodeep\else
      \advance\@enumdepth\@ne
      \edef\@enumctr{enum\romannumeral\the\@enumdepth}%
      \topsep\z@\parskip\z@
      \list{\csname label\@enumctr\endcsname}
        {\@nmbrlisttrue\let\@listctr\@enumctr
         \parsep\z@\itemsep\z@\topsep\z@
         \setcounter{\@enumctr}{0}
         \def \fMakelabel##1{\hss\llap{\rm ##1}}
       }\fi}
 \def\tril{\raisebox{-0.15em}{\tikz{\draw[black, fill=lightg, thick](0,0)--(0.5em,0)--(0.5em,-0.5em)--cycle;  \draw [densely dotted, thick] (0.5em,0)--(1em,0)--(1em,-1em)--(0.5em,-0.5em);}}}
 \def\trir{\raisebox{-0.15em}{\tikz{\draw[densely dotted, thick](0.5em,0)-- (0,0)--(1em,-1em)--(1em,0em); 
 \draw [black, fill=lightg,thick] (0.5em,0)--(1em,0)--(0.5em,-0.5em);}}}
 \def\pKL{\raisebox{-0.15em}{\tikz[scale=0.9]{\draw(0,0)--(1em,0)--(0em,-1em)--cycle;
\draw(0em,-0.2em)--(0.8em,-0.2em);
\draw(0em,-0.4em)--(0.6em,-0.4em);
\draw(0em,-0.6em)--(0.4em,-0.6em);
\draw[densely dotted, thick](1em,0)--(1em,-1em)--(0,-1em);
}}}
 \def\KL{\raisebox{-0.15em}{\tikz[scale=0.9]{\filldraw [black, fill opacity=0.4](1em,0)--(0,0)--(0em,-1em); \draw[densely dotted, thick](1em,0)--(1em,-1em)--(0,-1em);
}\,}}
 \def\Square{\raisebox{-0.15em}{\tikz[scale=0.9]{\draw(0,0)--(1em,0)--(1em,-1em)--(0,-1em)--cycle;
}}}
 \def\diag{\raisebox{-0.15em}{\tikz{\draw(0,0)--(1em,-1em);
}}}
 \def\UR{\raisebox{-0.15em}{\tikz[scale=0.9]{\filldraw [black, fill opacity=0.4](0,0)--(1em,0)--(1em,-1em); \draw[densely dotted, thick](1em,-1em)--(0,-1em)--(0,0);
}\,}}
 \def\CC{\raisebox{0.15em}{\tikz{\draw(0.5em,-0.5em)--(1em,0);
}}}
 \def\UP{\raisebox{-0.15em}{\tikz[scale=0.9]{\filldraw [black, fill opacity=0.4](1em,0)--(0,0)--(0.5em,-0.5em); \draw [densely dotted, thick] (1 em,0)--(1 em,-1em)--(0.5em,-0.5em);
}\,}}
 \def\RP{\raisebox{-0.15em}{\tikz[scale=0.9]{\draw [densely dotted](0,0)--(1em,0)--(0.5em,-0.5em)--cycle; \filldraw [black, fill opacity=0.4] (1 em,0)--(1 em,-1em)--(0.5em,-0.5em);
}\,}}
 \def\UPC{\raisebox{0.15em}{\tikz[scale=1.4]{\filldraw[black, fill opacity=0.4](0.8em,0)--(0,0)--(0.4em,-0.4em); \draw(0.6em,-0.4em)--(1em,0 em);
}}}
 \def\RPC{\raisebox{-0.15em}{\tikz[scale=1.4]{\filldraw[black, fill opacity=0.4](0.8em,0)--(0.8em,-0.8em)--(0.4em,-0.4em);\draw(0.3em,-0.3em)--(0.8em,0.2 em);
}\,}}
 \def\pUP{\raisebox{-0.25em}{\tikz[scale=0.9]{\draw(0,0)--(1.2em,0)--(0.6em,-0.6em)--cycle; \draw(0.15em,-0.15em)--(1.05em,-0.15em);
\draw(0.3em,-0.3em)--(0.9em,-0.3em);
\draw(0.45em,-0.45em)--(0.75em,-0.45em);
 \draw [densely dotted, thick] (1.2 em,0)--(1.2 em,-1.2em)--(0.6em,-0.6em);
}\,}}
 \def\pUPt{\,\raisebox{-0.2em}{\tikz[scale=0.42]{\draw(0,0)--(1.2em,0)--(0.6em,-0.6em)--cycle; \draw(0.15em,-0.15em)--(1.05em,-0.15em);
\draw(0.3em,-0.3em)--(0.9em,-0.3em);
\draw(0.45em,-0.45em)--(0.75em,-0.45em);
 \draw [densely dotted, thick] (1.2 em,0)--(1.2 em,-1.2em)--(0.6em,-0.6em);
}\,}}
 \def\pUPs{\,\raisebox{-0.2em}{\tikz[scale=0.5]{\draw(0,0)--(1.2em,0)--(0.6em,-0.6em)--cycle; \draw(0.15em,-0.15em)--(1.05em,-0.15em);
\draw(0.3em,-0.3em)--(0.9em,-0.3em);
\draw(0.45em,-0.45em)--(0.75em,-0.45em);
 \draw [densely dotted, thick] (1.2 em,0)--(1.2 em,-1.2em)--(0.6em,-0.6em);
}\,}}
 \def\UPs{\raisebox{-0.25em}{\,\tikz[scale=0.7]{\filldraw [black, fill opacity=0.4](1em,0)--(0,0)--(0.5em,-0.5em); \draw [densely dotted, thick] (1 em,0)--(1 em,-1em)--(0.5em,-0.5em);
}\,}}
 \def\UPCs{\raisebox{0.1em}{\,\tikz[scale=0.7]{\filldraw[black, fill opacity=0.4](0.8em,0)--(0,0)--(0.4em,-0.4em); \draw(0.6em,-0.4em)--(1em,0 em);
}}}
 \def\URs{\raisebox{-0.1em}{\,\tikz[scale=0.6]{\filldraw [black, fill opacity=0.4](0,0)--(1em,0)--(1em,-1em); \draw[densely dotted, thick](1em,-1em)--(0,-1em)--(0,0);
}\,}}
 \newcommand{\on}{\operatorname}
 \newcommand{\mc}{\on{main}}
\newcommand{\lmc}{\on{l.\! main}}
\newcommand{\rmc}{\on{r.\! main}}
\newcommand{\minc}{\on{minor}}
\newcommand{\core}{\on{core}}
\newcommand{\Limb}{\on{Limb}}
\def\leq{\leqslant}
\def\p{\mathfrak p}
\def\f{\mathfrak f}
\def\C{\mathbb C}
\def\F{\mathbb F}
\def\cO{\mathcal O}
\def\cA{\mathcal A}
\def\cL{\mathcal L}
\def\cR{\mathcal R}
\def\cE{\mathcal E}
\def\cD{\mathcal D}
\def\cV{\mathcal V}
\def\fX{\mathfrak X}
\def\fM{\mathfrak M}
\def\fD{\mathfrak D}
\def\fC{\mathfrak C}
\def\fB{\mathfrak B}
\def\fA{\mathfrak A}
\def\fm{\mathfrak m}
\def\Pl{\mathbb{PL}}
\DeclareMathOperator{\Hom}{Hom}
\DeclareMathOperator{\Res}{res}
\DeclareMathOperator{\RRes}{Res}
\DeclareMathOperator{\Ind}{Ind}
\DeclareMathOperator{\row}{row}
\DeclareMathOperator{\Stab}{Stab}
\DeclareMathOperator{\main}{main}
\DeclareMathOperator{\verge}{verge}
\DeclareMathOperator{\suppl}{suppl}
\DeclareMathOperator{\supp}{support}
\DeclareMathOperator{\Mat}{Mat}
\begin{document}

\title{Orbit method for $p$-Sylow subgroups of finite classical groups}

\author{Qiong Guo$^{*}$, Markus Jedlitschky$^{**}$, Richard Dipper$^{**}$\\ \\$^{*}${\footnotesize College of Sciences, Shanghai Institute of Technology} \\ {\footnotesize 201418 Shanghai, PR China}
\smallskip \\$^{**}$ {\footnotesize Institut f\"{u}r Algebra und Zahlentheorie}\\ {\footnotesize Universit\"{a}t Stuttgart, 70569 Stuttgart, Germany}
\setcounter{footnote}{-1}\footnote{\scriptsize E-mail: qiongguo@hotmail.com, markus.jedlitschky@gmail.com, richard.dipper@mathematik.uni-stuttgart.de}
\setcounter{footnote}{-1}\footnote{
{\scriptsize This work was  supported by NSFC no.11601338.}}
\setcounter{footnote}{-1}\footnote{\scriptsize\emph{2010 Mathematics Subject Classification.} Primary 20C15, 20D15. Secondary 20C33, 20D20 }
\setcounter{footnote}{-1}\footnote{\scriptsize\emph{Key words and phrases.}  $p$-Sylow  subgroups, Monomial linearisation, Supercharacter}}
\date{}


\maketitle

\begin{center}
\today
\end{center}

\begin{abstract}

For the $p$-Sylow subgroups $U$ of the finite classical groups of untwisted Lie type, $p$ an odd prime,  we construct a monomial $\C U$-module $M$ which is isomorphic to the regular representation of $\C U$ by a modification of Kirillov's orbit method called monomial linearisation. We classify a certain subclass of orbits of the $U$-action on the monomial basis of $M$ consisting of so called staircase orbits and show, that every orbit module in $M$ is isomorphic to a staircase one. Finally we decompose the Andr\'{e}-Neto supercharacters of $U$ into a sum of $U$-characters afforded by staircase orbit modules contained in $M$.   
\end{abstract}

\section{Introduction}

In the early sixties of the last century, A. Kirillov devised his orbit method (now known also as Kirillov theory) for his investigation of irreducible unitary representations of nilpotent and other classes of Lie groups.
 Starting point in this approach is the observation, that the action of Lie group on the dual of its Lie algebra provides a lot of information on representations of the group. For example for nilpotent groups Kirillov established a correspondence between irreducible unitary representations and the orbits (called coadjoint orbits) of the group acting on the dual of its Lie algebra. For a good exposition of this theory, see [\cite{kirillov}].

C. Andr\'{e} modified in a long series of paper [\cite{andre}-\cite{andre6}] the orbit method for his investigation on the characters of finite general unitriangular groups $U_n(q)$. It is known that the classification of the conjugacy classes and irreducible characters of $U_n(q)$ simultaneously for all $q$ and $n$ is a wild problem in the categorical sense. Indeed by [\cite{AriasetAl}] they are considered unknowable. More precisely 
Gudivok et al  showed in [\cite{gudivok}] that a nice description of the conjugacy classes leads to a nice
description of wild quivers. Hence it seems to be a good idea to replace the problem with an easier, doable one. Andr\'{e} just did this: His basic characters share important properties with irreducible ones and he classified them. 
 
 N. Yan developed in [\cite{yan}] a completely algebraic and combinatorial construction of Andr\'{e}'s basic characters. He showed that these are afforded by  orbits of $U_n(q)$ acting monomially on the space $\hat{\mathfrak u}$ of linear characters of its Lie algebra $\mathfrak u$. In addition he investigated biorbits of $U_n(q)$ acting on $\mathfrak u$, yielding certain unions of conjugacy classes of $U_n(q)$. Indeed he constructed, what nowadays is called a supercharacter theory, a term which was coined in [\cite{super}] by Diaconis and Isaacs. Such a theory consists of a set of characters, called supercharacters, and a set of unions of conjugacy classes, called superclasses, such that each irreducible character is constituent of precisely one supercharacter, each conjugacy class is contained in precisely one superclass, the superclasses and supercharacters are in one by one correspondence and supercharacters are constant on superclasses. 
 
To extend directly Andr\'{e}'s or Yan's method to the $p$-Sylow subgroups $U$ of other finite classical groups  does not work, basically since these are not algebra groups. Andr\'{e} and Neto  hence defined in [\cite{andreneto1},\cite{andreneto2},\cite{andreneto3}] supercharacter theories for $U$ of  of Lie type of type $\fB_n, \fC_n, \fD_n$  by restricting certain supercharacters from the overlying full unitriangular group $U_N(q)$ to $U_n$, (see e.g. [\cite{andreneto2}, 3.4]), respectively intersecting superclasses of $U_N(q)$ with $U_n$.  Here we set $N=2n$ for Dynkin types $\fC_n, \fD_n$ and $N=2n+1$ for type $\fB_n$.  More recently in  [\cite{andrefreitasneto}]  Andr\'{e}, Freitas  and Neto  and in [\cite{andrews}]  Andrews generalized this work extending it among other things to all finite classical groups of Lie type in a uniform way.
  
 Andr\'{e}-Yan supercharacters for $U_N(q)$ are afforded by orbit modules of a monomial action of $U_N(q)$ on $\hat{\mathfrak u}$.  The subgroup $U$ of $U_N(q)$ does in general not act on the latter transitively anymore. The main goal of this paper is to decompose the restrictions to $U$ of the relevant monomial $U_N(q)$-orbit modules into a direct sum of monomial $U$-orbit modules. 

In order to establish this main result we apply the concept of monomial linearisation of a group, introduced by the second named author in his doctoral thesis [\cite{markus}]. It may be considered as a modification of the original orbit method of Kirillov. The basic idea is to exhibit a special basis of the group algebra $\C G$ on which $G$ acts monomially by right multiplication exhibiting many orbits. So we may decompose the regular representation $\C G$ into a direct sum of many orbit modules. 

In this paper we obtain a monomial linearisation of the regular representation for the $p$-Sylow subgroups $U$ of finite classical groups of untwisted Lie type $\fB_n, \fC_n, \fD_n$, where $p$ is the characteristic of their underlying field and, throughout, is different from 2. 
We will not obtain a full classification of the orbits of $U$ acting on the monomial basis of $\C U$.
However we will show that there exists a certain subset of the collection of the orbits, called staircase orbits, such that every orbit module is isomorphic to the orbit module of a staircase orbit. Then we shall produce a classification of the staircase orbits. In the final section we shall construct  the elementary Andr\'{e}-Neto supercharacters as characters afforded by certain unions of our orbits. From this we finally obtain the desired decomposition of the relevant monomial $U_N(q)$-orbits into a disjoint union of $U$-orbits.

This paper is based on ideas developed by the second named author in his doctoral thesis [\cite{markus}], containing part of the results there. Here we provide among other things in particular a generalization to all classical groups of untwisted Lie type.

\section{$p$-Sylow subgroups of finite classical groups}

Let $p$ be an odd prime and $q$ be some power of $p$ and let $U$ be a $p$-Sylow subgroup of classical group $G(q)$ over $\F_q$. We restrict ourselves here to groups $G(q)$ of untwisted Lie type, that is of  type $\fB$, $\fC$ and $\fD$ defined over $\F_q$. For our purpose it is convenient to set up these groups as subgroups of the canonical overlying full unitriangular groups.  More precisely  let $n$ be a natural number and set  $N=2n+1, \tilde n = n+1$ for Dynkin type $\fB_n$ and $N=2n, \tilde n = n$ for types $\fC_n, \fD_n$. Let $G=G_n(q)$ be a finite group of Lie type  $\fB_n$, $\fC_n$ or $\fD_n$ defined over $\F_q$. We identify $G$ with a corresponding orthogonal or symplectic group contained in $GL_N(q)$ and construct the $p$-Sylow subgroup $U$ of $G$ as subgroup $G\cap U_N(q)$ of the group $U_N(q)$ of upper unitriangular $N\times N$-matrices over $\F_q$.  

For any rectangular matrix $A$, we denote the $(i,j)$-th entry of $A$ by $A_{ij}$, the transpose of $A$ by $A^t$  and the $(i,j)$-th matrix unit by $e_{ij}$.  Moreover $\supp(A)=\{(i,j)\,|\, A_{ij}\neq 0\}$. Thus
$$
A = \sum_{(i,j)\in\supp(A)}A_{ij}e_{ij}
$$

We let $\epsilon = -1$, if $G$ is of type $\fC_n$ and $n+1\leq i\leq N$ and set $\epsilon = 1$ otherwise. We define the {\bfseries mirror map}\index{mirror map}
$\bar{}:\; \{1, \dots, N\} \rightarrow \{1, \dots, N\}:\; i \longmapsto \bar i := N+1-i$
which mirrors every entry on $\frac{N+1}{2}$. It satisfies $\bar{\bar i} = i$, $ i < j \Longleftrightarrow \bar i > \bar j$, and $i =  j  \Longleftrightarrow \bar i = \bar j$ for all  $i,j \in \{1, \dots, N\}$. Setting 
$$
S=  \sum_{i=1}^n e_{i\bar i} +  \sum_{i=n+1}^N\epsilon e_{i\bar i}
$$

we define the bilinear form $\langle  \, , \,  \rangle: \F_q^N\times \F_q^N \rightarrow \F_q: (v,w)\rightarrow \langle v,w\rangle=v^t S w$, where the vectors of $\F_q^N$ are written as column vectors. Then $S$ is the Gram matrix of $\langle\,,\,\rangle$ with respect to the natural basis $\cE_N=\{e_1,\ldots, e_N\}$ of $\F_q^N$, that is $S_{ij}=\langle e_i, e_j \rangle$. Since $S$ is invertible, 
$\langle\,,\,\rangle$  is nondegenerate and is  symmetric for Lie types $\fB_n, \fD_n$ and symplectic for type $\fC_n$.

Then the corresponding classical group  $G$ is given as group of invertible $N\times N$ matrices leaving $\langle\,,\,\rangle$ invariant. 

For $A\in \Mat_{N\times N}(q)$ we define $A^R=S^{-1}A^tS$,
then $A\mapsto A^R$ is an $\F_q$-algebra antiautomorphism  of $\Mat_{N \times N}(q)$, the algebra of $N\times N$-matrices, and  $g\in GL_n(q)$ is contained in $G$ if and only if $g^R=g^{-1}.$

For $1\leqslant i, j\leqslant N$, we define $\varepsilon_{ij}=S_{\bar i i}S_{\bar j j}$. Then $\varepsilon_{ij}=1$ unless $G$ is of type $\fC_n$ and $1\leq i\leq  n$ and $n+1\leq j \leq N$ or $n+1 \leq i \leq N$ and $1\leq j\leq n$. In this case we have $\varepsilon_{ij} = -1$.

A direct calculation lets us  express the map $-^R$ explicitly on the matrix entries:

\begin{Lemma}\label{description of R}
Let $A$ be a $N\times N$-matrix. Then   $(A^R)_{ij}=\varepsilon_{ij} A_{\bar j\, \bar i}$ for all $1\leqslant i, j\leqslant N$. Moreover
for types $\fB_n, \fD_n$ , the matrix  $A^R$ is obtained from $A$ by reflecting the entries of $A$ along the antidiagonal and in type $\fC_n$ this holds up to a sign. In particular, $\big(U_N(q)\big)^R=U_N(q)$. Thus,  if $A$ is $R$-invariant, then $A_{ij}=0$ if and only if  $A_{\bar j\, \bar i}=0$.\hfill $\square$

\end{Lemma}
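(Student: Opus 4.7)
My plan is to prove the entry formula by direct computation from the definition $A^R = S^{-1} A^t S$, then read off all the stated consequences from that formula.

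First I would make the structure of $S$ explicit. By construction $S = \sum_{i=1}^N S_{i\bar i}\, e_{i\bar i}$, i.e.\ $S$ is a monomial (signed permutation) matrix whose only nonzero entry in row $i$ sits in column $\bar i$ with value $S_{i\bar i} = \pm 1$. A quick check of $S^2$ splits into two cases: for types $\fB_n, \fD_n$ all signs are $+1$ and $S^2 = I$, so $S^{-1} = S$; for type $\fC_n$ the signs on rows $n+1,\dots,N$ are $-1$ and $S^2 = -I$, so $S^{-1} = -S$. In both cases $S^{-1}$ has the same support pattern as $S$, with $(S^{-1})_{i\bar i} = \pm S_{i\bar i}$.

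Now I would plug in and compute
\[
(A^R)_{ij} \;=\; \sum_{k,l} (S^{-1})_{ik}\, (A^t)_{kl}\, S_{lj} \;=\; (S^{-1})_{i\bar i}\, A_{\bar j\,\bar i}\, S_{\bar j\, j},
\]
since only $k=\bar i$ and $l=\bar j$ survive. The remaining step is to verify that $(S^{-1})_{i\bar i}\, S_{\bar j\, j} = \varepsilon_{ij} = S_{\bar i\, i} S_{\bar j\, j}$. In types $\fB_n, \fD_n$ both factors are $+1$ and the identity is trivial. In type $\fC_n$ a short case check on whether $i \leq n$ or $i \geq n+1$ shows that the extra minus sign coming from $S^{-1} = -S$ is exactly what converts $S_{i\bar i}$ into $S_{\bar i\, i}$. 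This is the only place where one has to be careful; it is the only genuine sign-bookkeeping in the argument.

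With the entry formula $(A^R)_{ij} = \varepsilon_{ij} A_{\bar j\, \bar i}$ in hand the remaining claims are immediate. Because the map $(i,j) \mapsto (\bar j, \bar i)$ is exactly the reflection along the antidiagonal and $\varepsilon_{ij} = \pm 1$, the geometric description of $A \mapsto A^R$ as antidiagonal reflection (up to a sign in the symplectic case) follows. For $A \in U_N(q)$, if $i > j$ then $\bar i < \bar j$ so $A_{\bar j\,\bar i} = 0$, and for $i=j$ one has $\varepsilon_{ii} = S_{\bar i\, i}^2 = 1$, so $(A^R)_{ii} = A_{\bar i\, \bar i} = 1$; hence $A^R \in U_N(q)$ and, since $R$ is an involution (a one-line check using $S^2 = \pm I$), we get $U_N(q)^R = U_N(q)$. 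Finally if $A^R = A$ then $A_{ij} = \varepsilon_{ij} A_{\bar j\, \bar i}$ with $\varepsilon_{ij} \in \{\pm 1\}$, so $A_{ij} = 0 \Leftrightarrow A_{\bar j\, \bar i} = 0$.
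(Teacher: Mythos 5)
Your computation is correct and is precisely the ``direct calculation'' the paper invokes without writing out: the lemma is stated with no proof, and your expansion of $(S^{-1}A^tS)_{ij}$ using the monomial structure of $S$, together with the sign check $(S^{-1})_{i\bar i}=S_{\bar i\,i}$ in type $\fC_n$, supplies exactly the missing details. The derivation of the consequences (antidiagonal reflection, $U_N(q)^R=U_N(q)$ via the involution property, and the vanishing equivalence for $R$-invariant $A$) is also sound.
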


\begin{Remark}
For $A\in \Mat_{N\times N}(q)$, $AA^R$ is $R$-invariant and hence in particular the last assertion of the lemma above applies to it.\hfill $\square$
\end{Remark}

In the following we frequently work with various linear subspaces of  $\Mat_{N\times N}(q)$ defined as sets of all matrices being supported in special subsets of $\{(i,j)\,|\, 1\leq i,j\leq N\}$. To help with the bookkeeping of those we shall use specially designed descriptive symbols for these subsets:

\begin{Notation}
\quad
\begin{itemize}
\item [1)] Let $\Square$  $=$  $\{ (i,j) \; | \; 1 \leq i,j \leq N \}=\tilde \Phi,$
and $\UR=$  $\{ (i,j) \in \Square \; | \; i < j \}=\tilde \Phi^+$.
\item [2)] We denote the diagonal by $\diag :=\{(i,j) \in \Square \; | \; i=j \}$ and 
half of the antidiagonal by ${\CC}:=\{ (i,j)\in \UR \; | \;  i = \bar j \}$. 
\item [3)] Let  ${\UP}= \{ (i,j) \in \Square \; | \; i < j < \bar i \}$ and 
${\RP}=\{ (i,j)\in \Square \; | \; \bar j < i < j \}$, illustrated as follows:
\begin{center}
\begin{tikzpicture}[scale=0.7]

\draw (0,-2) rectangle (4,2);
\draw [color=black, ultra thick] (2,0)--(4,2);
\draw [ultra thick] (0,2)--(4,-2);
\draw (6,1) node {${\RP}$};
\draw [thick] (3.5,0)--(5.6,1);

\draw (0.3,3) node {${\UP}$};
\draw [thick] (2,1.5)--(0.6,3);

\draw [thick] (4,2)--(5,2.6);
\draw (5.3,2.5) node {$\CC$};
\end{tikzpicture}
\end{center}
Thus $\UR=\UP\cup \CC \cup \RP$.
\item[4)] Set $\UPC=\UP\cup \CC$ and $\RPC=\RP\cup \CC$.
\item[5)] To simplify notation we define $\pUP$ to be $\UP$ for types  $\fB_n, \fD_n$ and $\UPC$ for type $\fC_n$.
\end{itemize}\hfill $\square$
\end{Notation}

\begin{Lemma}\label{lemmaurs} Let $u\in U_N(q)$. Then $u\in U=G\cap U_N(q)$  if and only if $(uu^R)_{rs}=0$  for all $(r,s)\in \RPC$ and then for all $(r,s)\in \RPC$:

\begin{equation}\label{urs}
u_{rs}=-\varepsilon_{rs} u_{\bar s\, \bar r}-\sum_{r<l<s}\varepsilon_{ls} u_{rl}u_{\bar s\, \bar l}.
\end{equation}
\end{Lemma}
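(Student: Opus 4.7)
My plan is to rewrite $u\in U$ as the matrix identity $uu^R=I_N$, halve the resulting equations using the $R$-invariance of $uu^R$, and then expand the remaining entries directly. Since $G$ is defined by preserving $\langle\,,\,\rangle$, for $u\in U_N(q)$ we have $u\in U=G\cap U_N(q)$ precisely when $u^R=u^{-1}$, equivalently $uu^R=I_N$. By Lemma~\ref{description of R}, $u^R\in U_N(q)$, so both $u^R$ and $uu^R$ are unitriangular, and the equalities $(uu^R)_{ij}=\delta_{ij}$ are automatic for $i\geq j$. Thus $u\in U$ if and only if $(uu^R)_{ij}=0$ for all $(i,j)\in\UR=\UP\cup\CC\cup\RP$.

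Next, by the Remark immediately after Lemma~\ref{description of R}, $uu^R$ is $R$-invariant, so the last clause of that lemma yields $(uu^R)_{ij}=0\iff(uu^R)_{\bar j\,\bar i}=0$. The involution $(i,j)\mapsto(\bar j,\bar i)$ fixes $\CC$ pointwise and exchanges $\UP$ with $\RP$, since $i<j<\bar i$ is equivalent to $i<\bar j<\bar i$. Hence the equations indexed by $\UP$ are redundant given those indexed by $\RP$, and the full system collapses to $(uu^R)_{rs}=0$ for all $(r,s)\in\RPC=\RP\cup\CC$.

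It remains to extract formula~(\ref{urs}) from this vanishing. Substituting $(u^R)_{ls}=\varepsilon_{ls}\,u_{\bar s\,\bar l}$ from Lemma~\ref{description of R},
\[
(uu^R)_{rs}=\sum_{l} u_{rl}\,\varepsilon_{ls}\,u_{\bar s\,\bar l},
\]
and unitriangularity of $u$ confines the sum to $r\leq l\leq s$ (the factor $u_{rl}$ forces $l\geq r$ and the factor $u_{\bar s\,\bar l}$ forces $\bar l\leq\bar s$, i.e.\ $l\leq s$). The boundary contributions are $\varepsilon_{rs}\,u_{\bar s\,\bar r}$ at $l=r$ (using $u_{rr}=1$) and $u_{rs}$ at $l=s$ (using $u_{\bar s\,\bar s}=1$ and $\varepsilon_{ss}=1$); equating the total to zero and isolating $u_{rs}$ yields~(\ref{urs}). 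The only real pitfalls are the sign bookkeeping with $\varepsilon_{ls}$ in type $\fC_n$ and verifying that the mirror involution genuinely swaps $\UP$ with $\RP$, so that the reduction from $\UR$ to $\RPC$ is a two-way equivalence; once these are in hand, the lemma follows from the single matrix-multiplication calculation above.
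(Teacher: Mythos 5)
Your proof is correct and follows essentially the same route as the paper: reduce $u\in U$ to $uu^R=1$, use the $R$-invariance of $uu^R$ to cut the conditions on $\UR$ down to $\RPC$, and expand the $(r,s)$ entry of the product to isolate $u_{rs}$. (One harmless slip: unitriangularity of $u$ forces $\bar s\leq\bar l$, not $\bar l\leq\bar s$, but your conclusion $l\leq s$ is the right one.)
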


\begin{proof}
Recall that $u\in U$ if and only if $u^{-1}=u^R$ or equivalently 
$uu^R=u^Ru=1$.
Since $u\in U_N(q)$, we have $(uu^R)_{ii}=1$ for $1\leqslant i\leqslant N$
Since $uu^R=(uu^R)^R$ we have $(uu^R)_{ij}=\varepsilon_{ij}(uu^R)_{\bar j \, \bar i} $ by \ref{description of R}, hence $(uu^R)_{ij}=0 $ if and only if $(uu^R)_{\bar j \, \bar i}=0 $ for $1\leqslant i<j \leqslant N$. Thus $u\in U$ if and only if $(uu^R)_{rs}=0$ for all $(r,s)\in \RPC$. Thus suppose $u\in U$ and let $(r,s)\in \RPC$. Then we obtain from \ref{description of R}  using the fact that $u, u^R\in U_N(q)$:
\begin{eqnarray*}
0=(uu^R)_{rs}&=&\sum_{l=1}^N u_{rl} (u^R)_{ls}
=\sum_{l=r}^s u_{rl}(u^R)_{ls}\\
&=& u_{rs}+(u^R)_{rs}+\sum_{r<l<s} u_{rl}(u^R)_{ls}\\
&=& u_{rs}+\varepsilon_{rs}u_{\bar s \, \bar r}+ \sum_{r<l<s} \varepsilon_{ls} u_{rl}u_{\bar s\, \bar l}.
\end{eqnarray*}
Thus $u_{rs}=-\varepsilon_{rs} u_{\bar s\, \bar r}-\sum_{r<l<s}\varepsilon_{ls} u_{rl}u_{\bar s\, \bar l}$. In addition, if $r<l<s$, then $\bar r>\bar l >\bar s$  and hence $(\bar s,\bar l)\in \UP$ since $(\bar s,\bar r)\in \UPC$.
\end{proof}

\begin{Cor}\label{depend on the left}
Let $u\in U$ and $(r,s)\in \RPC$. Then  $u_{rs}$ is determined by the  entries on the positions of row $r$ to the left of $(r,s)$ or of row $\bar s$ to the left of or on  $(\bar s, \bar r)$.\hfill$\square$
\end{Cor}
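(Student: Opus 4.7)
The plan is to read the claim directly off the explicit formula for $u_{rs}$ supplied by Lemma~\ref{lemmaurs}, namely
$$u_{rs}=-\varepsilon_{rs} u_{\bar s\,\bar r}-\sum_{r<l<s}\varepsilon_{ls}\, u_{rl} u_{\bar s\,\bar l}.$$
All one has to do is verify, term by term, that each matrix entry appearing on the right-hand side is located either in row $r$ strictly to the left of column $s$, or in row $\bar s$ at or strictly to the left of column $\bar r$.

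The inhomogeneous term $u_{\bar s\,\bar r}$ is manifestly the entry at position $(\bar s,\bar r)$, which is the rightmost admissible position in the second region. For a fixed index $l$ with $r<l<s$ occurring in the sum, the factor $u_{rl}$ lies in row $r$ at column $l<s$, hence strictly to the left of $(r,s)$, and therefore belongs to the first region. For the other factor $u_{\bar s\,\bar l}$, I would invoke the order-reversing property of the mirror map recalled at the outset of this section: since $r<l<s$, we have $\bar s<\bar l<\bar r$, so $u_{\bar s\,\bar l}$ sits on row $\bar s$ in column $\bar l<\bar r$, i.e.\ strictly to the left of $(\bar s,\bar r)$, and again lies in the second region.

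Assembling these observations shows that every quantity that enters the right-hand side of \eqref{urs} is one of the entries permitted by the statement, so $u_{rs}$ is determined by data from precisely those two strips. There is no substantive obstacle here; the only point requiring attention is consistently applying $i<j\Longleftrightarrow \bar i>\bar j$ when converting statements about rows and columns indexed by $l$ into statements about those indexed by $\bar l$. This gives the corollary.
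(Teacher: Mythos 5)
Your proposal is correct and is exactly the argument the paper intends: the corollary is stated as an immediate consequence of equation (2.1) in Lemma~\ref{lemmaurs}, and your term-by-term check (using $r<l<s\Rightarrow\bar s<\bar l<\bar r$ for the factors $u_{\bar s\,\bar l}$) is precisely the verification the paper leaves to the reader. Nothing is missing.
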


We iterate the argument and obtain recursively:

\begin{Theorem}\label{unique}

If $G$ is of type $\fB_n$ or $\fD_n$ let $(r,s)\in \RPC$, and if it is of type  $\fC_n$, then let $(r,s)\in \RP$. Then there exists a polynomial $p_{rs}(t_{ij})$ in variables $t_{ij}$ with $(i,j)\in \pUP$  such that $u_{rs}=p_{rs}(u_{ij})$. Moreover, for each $(i,j)\in \pUP$  choose $\lambda_{ij}\in\F_q$. Then there exists a  unique element $u\in U$ such that $u_{ij}=\lambda_{ij}$ for all $(i,j)\in \pUP$.
\end{Theorem}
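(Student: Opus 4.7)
The plan is to induct on the width $s-r$, exploiting the recursion (\ref{urs}) of Lemma~\ref{lemmaurs} as the engine that expresses $u_{rs}$ in terms of entries further to the left on row $r$ and on row $\bar s$. Once the polynomial form of $u_{rs}$ is in place, uniqueness of $u$ follows at once: unitriangularity fixes the diagonal and strict lower part, the $\lambda_{ij}$ fix the entries on $\pUP$, and the polynomials $p_{rs}$ fix everything else. Existence reduces, by Lemma~\ref{lemmaurs}, to checking $(uu^R)_{rs}=0$ on $\RPC$.

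For the induction step at $(r,s)\in\RP$, I would inspect each term on the right of
\[
u_{rs} = -\varepsilon_{rs}\,u_{\bar s\,\bar r} - \sum_{r<l<s}\varepsilon_{ls}\,u_{rl}\,u_{\bar s\,\bar l}.
\]
The positions $(\bar s,\bar r)$ and $(\bar s,\bar l)$ for $r<l<s$ all lie in $\UP\subseteq\pUP$; this is a brief unwrapping of the inequalities $\bar s<\bar l<s$, using $\bar s<r$ from $(r,s)\in\RP$. The row-$r$ entries $u_{rl}$ split by position: those with $l<\bar r$ already sit in $\UP$; the one with $l=\bar r$ (if present) lies on $\CC$ and is handled by the antidiagonal special case below; and those with $\bar r<l<s$ lie in $\RP$ at strictly smaller width $l-r<s-r$, so the induction hypothesis covers them.

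The delicate point, and main obstacle, is the antidiagonal case $(r,\bar r)\in\CC$. Substituting $s=\bar r$ and $\bar s=r$ into (\ref{urs}) yields
\[
(1+\varepsilon_{r\bar r})\,u_{r\bar r} \;=\; -\sum_{r<l<\bar r}\varepsilon_{l\bar r}\,u_{rl}\,u_{r\bar l},
\]
with the right-hand side manifestly a polynomial in $\UP$-entries. In types $\fB_n,\fD_n$ one has $\varepsilon_{r\bar r}=1$, so the coefficient $2$ on the left is invertible because $p\neq 2$, and $u_{r\bar r}$ is expressed polynomially in $\pUP=\UP$-entries, with no further recursion needed. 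In type $\fC_n$ one has $\varepsilon_{r\bar r}=-1$, so the left-hand side vanishes; pairing $l\leftrightarrow\bar l$ in the sum and using $\varepsilon_{l\bar r}=-\varepsilon_{\bar l\bar r}$, the symmetry of $u_{rl}u_{r\bar l}$, and the fact that $N=2n$ is even (so there is no fixed point $l=\bar l$), shows the right-hand side vanishes identically. This explains both why the theorem excludes $\CC$ in type $\fC_n$ and why $\pUP=\UPC$ is the right choice there: the entries $u_{r\bar r}$ are genuinely free parameters.

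For existence, given $\lambda_{ij}\in\F_q$ for $(i,j)\in\pUP$, define $u\in U_N(q)$ by placing these values on $\pUP$ and by the polynomials $p_{rs}$ on the remaining positions of $\RP$ (and on $\CC$ in the orthogonal cases). By construction $(uu^R)_{rs}=0$ on $\RP$ and, in types $\fB_n,\fD_n$, also on $\CC$; in type $\fC_n$ the pairing cancellation just used shows $(uu^R)_{r\bar r}=0$ automatically, regardless of the free choice of $\lambda_{r\bar r}$. Lemma~\ref{lemmaurs} then yields $u\in U$, completing existence.
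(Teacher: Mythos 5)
Your proof is correct and follows essentially the same route as the paper's: the recursion of Lemma \ref{lemmaurs}, an induction that resolves the positions of $\RPC$ from the freely chosen $\pUP$-entries, and the special treatment of the antidiagonal where $p\neq 2$ enters through the factor $2=1+\varepsilon_{r\bar r}$. The only place you go beyond the paper is the explicit type-$\fC_n$ verification that the antidiagonal instance of (\ref{urs}) degenerates to $0=0$ via the pairing $l\leftrightarrow\bar l$ and $\varepsilon_{l\bar r}=-\varepsilon_{\bar l\,\bar r}$ (the paper disposes of this case with ``similar arguments apply''); that computation is correct and is precisely why $\CC$ must be adjoined to $\pUP$ in the symplectic case.
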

\begin{proof}
Suppose $G$ is of type $\fB_n$ or $\fD_n$. Then we have $\varepsilon_{ij}=1$ for all $1\leqslant i, j\leqslant N$. 

Let $(r, \bar r)\in \CC$. Then \ref{urs} becomes:
\begin{equation}\label{urbarr}
u_{r\bar r}=-\frac{1}{2}\sum_{r<l<\bar r}u_{rl}u_{r\, \bar l}
\end{equation}
Since $r<l<\bar r$ we have $r<\bar l <\bar r$ and hence $(r,l),(r, \bar l)\in \UP$. Thus, 
\begin{equation}\label{prbarr}
p_{r\bar r}=-\frac{1}{2}\sum_{r<l<\bar r}t_{rl}t_{r\, \bar l}
\end{equation}
 is the desired polynomial for  $(r, \bar r)\in \CC$. 

Now assume $(r,s)\in \RP$ then $ (\bar s, \bar r)\in \UP$ and hence $(\bar s, \bar l)\in \UP$ for all $r<l<s$. Moreover the positions $(r,l)$ in \ref{urs} are to the left of $(r,s)$. Inductively  we may assume that we have for all the positions $(r,l)$ already defined polynomials $p_{rl}$ in variables $t_{ij}$ with $(i,j)\in \UP$ such that $p_{rl}(t_{ij})=u_{rl}$. Then, setting 
 \begin{equation}\label{prs+}
p_{rs}=- t_{\bar s\, \bar r}-\sum_{r<l<s}t_{\bar s\, \bar l}p_{rl}
\end{equation}
we have obtained the polynomials we want. 
Moreover this proves as well that we can choose  $\lambda_{ij}\in\F_q$ for$(i,j)\in \UP$ freely  and obtain values  for the positions $(r,s)$ with $(r,s)\in \RPC$ using formulas \ref{prs+} and \ref{prbarr} to produce $u\in U$ satiesfying $u_{ij} = \lambda_{ij}$ for all $(i,j)\in \UP$.
If $G$ is of type $\fC_n$ similar arguments apply observing that  $\varepsilon_{ij} = -1$ occurs. \end{proof}

\begin{Remark} We remark in passing, that theorem \ref{unique} shows among other things, that the groups $U$ are indeed $p$-Sylow subgroups of the finite classical groups. This follows immediately from the well known order formulas for the finite classical groups, (see e.g. [\cite{grove}]), and the immediate consequence of \ref{unique} given as $|U|=q^a$ with $a=|\pUP|$.
\hfill$\square$ \end{Remark}

\begin{Remark}\label{directc} Direct inspection of equation  \ref{urs} in  lemma \ref{lemmaurs} yields in particular the following:  If $u_{sr}\neq 0$ and $u_{ij}=0$ for all $(s,r)\neq (i,j)\in \UP$,  then $u_{\bar s \, \bar r}=-\varepsilon_{rs}u_{rs}$ and $u_{ij}=0$ for all $(i,j)\in \UR$ with $(i,j)\neq (r,s), (\bar s, \bar r)$ for $(r,s)\in\UP$ , provided
  $G$ is not of type $\fB_n$ or $r\neq n+1$. If $G$ is of type $\fB_n$, and $r= n+1$,  then $u$ has one more additional nonzero entry, namely $u_{s\bar s} = -\frac{1}{2} (u_{ s, n+1})^2$.
  
\hfill$\square$
\end{Remark}

From standard arguments from Lie theory one sees easily, that for types $\fB_n, \fC_n, \fD_n$  the positive roots $\Phi^+$ of the associated root system $\Phi$ are in bijection with the set
$\pUP$ (or rather pairs $\{(i,j), (\bar j, \bar i)\}, 1<i<j\leq \tilde n$, 
 for details see e.g. [\cite{carter}, section 11.2]). Hence we may identify $\Phi^+$ and $\pUP$. For type $\fA_{N-1}$ we identify $\Phi^+$ with $\UR$.

We call $J\subseteq\Phi^+$ {\bf closed}, if $\alpha, \beta\in J$ and $ \alpha+ \beta\in \Phi^+$ implies $\alpha+\beta\in J$. Translating this into a statement on subsets $J$ of $\pUP$ (respectively of $\UR$) one proves by direct calculation:

\begin{Lemma}\label{Upattern}
 Let $J \subseteq \pUP$ be a set satisfying 
\begin{align*} (i) && (i,j), (j,k) \in J &\quad\quad \Rightarrow \quad\quad (i,k) \in J  \\ \text{and} \quad (ii) && (i,j),(\bar k, \bar j) \in J, \; (i,k) \in \pUP &\quad\quad \Rightarrow \quad\quad (i,k) \in J. \end{align*}
Then $J$ is a {closed subset} of $\pUP$.  In type $\fA_{N-1}$, $J\subseteq \UR$ is closed if condition $ (i) $ is satisfied. Note that $\pUP$ is closed in $\UR$, that is $(i,j), (j,k)\in \pUP$ implies $(i,k)\in \pUP$.\hfill $\square$
\end{Lemma}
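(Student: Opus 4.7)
The plan is to reduce the combinatorial conditions (i) and (ii) to the Lie-algebraic notion of root-system closure by going through the correspondence between positions in $\pUP$ and positive roots fixed just above the statement. I would use the standard basis of the Lie algebra $\u$ of $U$ consisting of the matrices $X_{(i,j)} = e_{ij} - \varepsilon_{ij}\, e_{\bar j\, \bar i}$ for $(i,j) \in \UP$, supplemented by the matrices $e_{i\,\bar i}$ for $(i,\bar i) \in \CC$ in type $\fC_n$. A root sum $\alpha + \beta \in \Phi^+$ corresponds precisely to a nonzero commutator $[X_\alpha, X_\beta] \in \C\, X_{\alpha+\beta}$, so it suffices to enumerate those pairs of positions for which such a commutator is nonzero and to check that the corresponding third position is forced into $J$ by (i) or (ii).

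For type $\fA_{N-1}$ the only identity that arises is $[e_{ij}, e_{jk}] = e_{ik}$, which is exactly (i). For $\fB_n, \fC_n, \fD_n$ there are essentially two patterns. First, $[X_{(i,j)}, X_{(j,k)}]$ with $(i,j),(j,k)\in\pUP$: expanding and using $e_{ab}e_{cd} = \delta_{bc}\, e_{ad}$, the only nonzero summands are $[e_{ij}, e_{jk}] = e_{ik}$ and $[e_{\bar j\, \bar i}, e_{\bar k\, \bar j}] = -e_{\bar k\, \bar i}$, so the commutator is a scalar multiple of $X_{(i,k)}$; this encodes condition (i). Second, $[X_{(i,j)}, X_{(\bar k\, \bar j)}]$: the surviving summands are $[e_{ij}, e_{jk}] = e_{ik}$ coming from the $e_{jk}$-summand of $X_{(\bar k\, \bar j)}$, and $[e_{\bar j\, \bar i}, e_{\bar k\, \bar j}] = -e_{\bar k\, \bar i}$ coming from its $e_{\bar k\, \bar j}$-summand. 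Again the result is a scalar multiple of $X_{(i,k)}$, which is condition (ii).

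Chasing the remaining Kronecker deltas one verifies that no other combinations of summands in such commutators survive, so every instance of $\alpha + \beta \in \Phi^+$ is covered by these two patterns. The antidiagonal short root vectors $e_{i\,\bar i}$ of type $\fC_n$ and the short root vectors $X_{(i,n+1)}$ of type $\fB_n$ require a brief separate check, but the commutators involving them reduce to the same two patterns with $j = \bar i$ or $j = n+1$, so that conditions (i) and (ii) together imply the closure of $J$ under root addition in all three types.

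The main obstacle is the sign bookkeeping for type $\fC_n$: one must verify the identity $\varepsilon_{ij}\varepsilon_{jk} = \varepsilon_{ik}$ and its barred analogues to confirm that the surviving coefficient in each commutator is a nonzero scalar and not accidentally zero. This is a routine check from the definition $\varepsilon_{ij} = S_{\bar i\, i} S_{\bar j\, j}$, but must be handled separately for the cases in which one of the indices crosses the middle $n+1$ barrier. Once this is done, the combinatorial statement of the lemma follows immediately.
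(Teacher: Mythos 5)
The paper itself offers no written proof of this lemma (it is asserted to follow ``by direct calculation'' from the identification of $\pUP$ with $\Phi^+$), so your task is really to supply that calculation, and your commutator bookkeeping is a legitimate and essentially equivalent way to do it: enumerating the pairs of positions whose root vectors have nonvanishing bracket is the same case analysis as enumerating the pairs of positive roots $\alpha,\beta$ with $\alpha+\beta\in\Phi^+$ in the standard realisation $\{e_i\pm e_j,\,e_i,\,2e_i\}$, which is what the paper intends. Your identification of the two generic patterns is correct: for positions $(a,b),(c,d)\in\pUP$ with $a\leq c$ the only ways a product of their matrix-unit summands can survive are $b=c$ (condition (i)) and $b=\bar d$ (condition (ii)), since $c=\bar a$ and $a=d$ are excluded by $c\leq n<\bar a$ and $a\leq c<d$. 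Two points in your sketch genuinely carry weight and should not be compressed away. First, the equivalence ``$[X_\alpha,X_\beta]\neq 0\Leftrightarrow\alpha+\beta\in\Phi^+$'' is what makes your enumeration \emph{complete}: if a structure constant vanished mod $p$ you would silently miss a case of $\alpha+\beta\in\Phi^+$; here the constants are $\pm1,\pm2$ and $p$ is odd, so this is fine, but it deserves a sentence. Second, the ``brief separate checks'' are exactly where condition (ii) earns its keep in its full generality: in type $\fB_n$ the sum $e_i+e_j$ of two short roots comes from positions $(i,n+1)$ and $(j,n+1)$ and is covered by (ii) only because $\overline{n+1}=n+1$; in type $\fC_n$ the long root $2e_i$ comes from $(i,j)$ and $(i,\bar j)$, i.e.\ (ii) with $\bar k=i$ and conclusion on $\CC$; and conversely for types $\fB_n,\fD_n$ the hypothesis ``$(i,k)\in\pUP$'' in (ii) correctly excludes the pair $(i,j),(i,\bar j)$, whose bracket one computes to be zero there, matching the fact that $2e_i$ is not a root. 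With those two items written out, your argument is a complete proof.
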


For type $\fA_{N-1}$ we denote $\Phi$ henceforth  by $\tilde \Phi=\tilde \Phi^+\cup \tilde \Phi^-$, where $\tilde \Phi^-=\{(i,j)\in \tilde \Phi\,|\,i>j\}$.

\begin{Lemma}\label{rootsubgroupsU}
For $1\leqslant i<j \leqslant N$ and $\alpha\in \F_q$ let $\tilde x_{ij}(\alpha)=1+\alpha e_{ij}\in U_N(q)$ and $\tilde X_{ij}=\{\tilde x_{ij}(\alpha)\,|\, \alpha\in \F_q\}$, then $\tilde x_{ij}(\alpha)\tilde x_{ij}(\beta)=\tilde x_{ij}(\alpha+\beta)$ for $\alpha, \beta\in \F_q$, and hence $\tilde X_{ij}\cong (\F_q, +)$. These are the root subgroups of $U_N(q)$ of type $\fA_{N-1}$. Now
Let $(i,j) \in \pUP$. We define  \vspace{-.4cm}
\begin{center}\begin{tabular}{p{.14\textwidth}p{.49\textwidth}p{.33\textwidth}}
in type $\fB_n$: &  $x_{ij}(\alpha) = 1+\alpha e_{ij}- \alpha e_{\bar j \, \bar i} 
=\tilde x_{ij}(\alpha)\tilde x_{\bar j\, \bar i}(-\alpha)$ &  where $\alpha \in \F_q$,\; if $j \ne n+1$, \\[.2cm]
& $x_{i,n+1}(\alpha) = 1+\alpha e_{i, n+1} - \alpha e_{n+1,\bar i} - \frac{1}{2}\alpha^{2}e_{i, \bar i}$ &\\[.2cm]
& $\phantom{x_{i,n+1}(\alpha)}=\tilde x_{i,n+1}(\alpha)\tilde x_{n+1,\, \bar i}(-\alpha)\tilde x_{i \bar i} (\frac{1}{2}-\alpha^2)$ &   where $\alpha \in \F_q$, \\
& $\phantom{x_{i,n+1}(\alpha)}$\\[-.1cm]
in type $\fC_n$: &  $x_{ij}(\alpha) = 1+\alpha e_{ij}- \alpha e_{\bar j \,\bar i}=\tilde x_{ij}(\alpha)\tilde x_{\bar j\, \bar i}(-\alpha) $  &  where $\alpha \in \F_q$,\; if $j \leq n$, \\[.1cm]
&  $x_{ij}(\alpha) = 1+\alpha e_{ij} + \alpha e_{\bar j \,\bar i}=\tilde x_{ij}(\alpha)\tilde x_{\bar j\, \bar i}(\alpha)$ &  where $\alpha \in \F_q$,\; if $n < j <  \bar i$, \\[.1cm]
& $x_{i\bar i}(\alpha) = 1+\alpha e_{i \bar i}=\tilde x_{i \bar i}(\alpha)$ & where $\alpha \in \F_q$, \\[.3cm]
in type $\fD_n$: &  $x_{ij}(\alpha) = 1+\alpha e_{ij}- \alpha e_{\bar j \,\bar i}=\tilde x_{ij}(\alpha)\tilde x_{\bar j\, \bar i}(-\alpha) $ &  where $\alpha \in \F_q$.
\end{tabular}
\end{center}
   We define $X_{ij}=\{x_{ij}(\alpha)\,|\, \alpha\in \F_q\}$. Then $X_{ij}\cong (\F_q, +)$ is the root subgroup of $U$ associated to the position $(i,j)\in \pUP$.
\end{Lemma}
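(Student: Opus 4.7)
There are two things to verify: that each $x_{ij}(\alpha)$ lies in $U$, and that the map $\F_q\to U:\alpha\mapsto x_{ij}(\alpha)$ is an injective group homomorphism. For the first assertion I would show the cleaner statement $x_{ij}(\alpha)^R = x_{ij}(-\alpha)$ together with $x_{ij}(\alpha)x_{ij}(-\alpha)=1$, so that $x_{ij}(\alpha)^R$ is a two-sided inverse and membership in $U=G\cap U_N(q)$ follows from the criterion $u^R=u^{-1}$ noted before Lemma~\ref{description of R}. The second assertion then reduces to verifying the one-parameter identity $x_{ij}(\alpha)x_{ij}(\beta)=x_{ij}(\alpha+\beta)$ by a direct matrix calculation. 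Throughout I will use $e_{kl}e_{mn}=\delta_{lm}e_{kn}$ and the explicit formula $(A^R)_{kl}=\varepsilon_{kl}A_{\bar l\,\bar k}$ from Lemma~\ref{description of R}.

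For the $\tilde X_{ij}$ statement the product $\tilde x_{ij}(\alpha)\tilde x_{ij}(\beta)$ expands to $1+(\alpha+\beta)e_{ij}+\alpha\beta e_{ij}^2$, and $e_{ij}^2=0$ since $i\neq j$. For the generic cases (types $\fB_n,\fD_n$ with $j\neq n+1$, type $\fC_n$ with $j\leq n$ or $n<j<\bar i$) I write $x_{ij}(\alpha)=1+\alpha a$ with $a=e_{ij}-\varepsilon_{ij}e_{\bar j\,\bar i}$ (using $\varepsilon_{ij}=\varepsilon_{\bar j\bar i}$, which I would first record as an easy sign check). Applying Lemma~\ref{description of R} entrywise confirms $a^R=-a$, whence $x_{ij}(\alpha)^R=x_{ij}(-\alpha)$. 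The key observation is $a^2=0$: the cross terms $e_{ij}e_{\bar j\,\bar i}=\delta_{j\bar j}e_{i\bar i}$ and $e_{\bar j\,\bar i}e_{ij}=\delta_{\bar i i}e_{\bar j j}$ both vanish because for the positions $(i,j)\in\pUP$ occurring here one has $j\neq\bar j$ and $i\neq\bar i$. Hence $x_{ij}(\alpha)x_{ij}(\beta)=1+(\alpha+\beta)a=x_{ij}(\alpha+\beta)$, and in particular $x_{ij}(\alpha)x_{ij}(-\alpha)=1$. The type $\fC_n$ case $j=\bar i$ is trivial because $x_{i\bar i}(\alpha)=1+\alpha e_{i\bar i}$ and $e_{i\bar i}^2=0$.

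The only case requiring genuine care is $j=n+1$ in type $\fB_n$, where the extra quadratic correction $-\tfrac12\alpha^2 e_{i\bar i}$ appears because $\bar{n+1}=n+1$. Setting $b=e_{i,n+1}-e_{n+1,\bar i}$ and $c=-\tfrac12 e_{i\bar i}$, the point is to compute
\[
b^2 = -e_{i,n+1}e_{n+1,\bar i} = -e_{i\bar i} = 2c, \qquad bc = cb = c^2 = 0,
\]
using $i<n+1<\bar i$. Then $x_{i,n+1}(\alpha)=1+\alpha b+\alpha^2 c$ multiplies as
\[
(1+\alpha b+\alpha^2 c)(1+\beta b+\beta^2 c)=1+(\alpha+\beta)b+(\alpha^2+2\alpha\beta+\beta^2)c = x_{i,n+1}(\alpha+\beta),
\]
(this is where $p$ odd is implicitly used to divide by~$2$), giving the one-parameter property and in particular $x_{i,n+1}(\alpha)x_{i,n+1}(-\alpha)=1$. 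The same entrywise check via Lemma~\ref{description of R} yields $x_{i,n+1}(\alpha)^R=x_{i,n+1}(-\alpha)$, confirming $x_{i,n+1}(\alpha)\in U$.

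Finally, injectivity of $\alpha\mapsto x_{ij}(\alpha)$ is obvious from reading off the $(i,j)$-entry, so $X_{ij}\cong(\F_q,+)$; that these are the root subgroups of $U$ attached to the positive roots indexed by $(i,j)\in\pUP$ is immediate from the identification of $\Phi^+$ with $\pUP$ made just above. The main obstacle is purely bookkeeping of the signs $\varepsilon_{ij}$ in type $\fC_n$ and of the quadratic correction in type $\fB_n$; no deeper argument is needed.
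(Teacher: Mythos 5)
Your proposal is correct, and all the computations check out: $a^2=0$ in the generic cases, the algebra $b^2=2c$, $bc=cb=c^2=0$ in the type $\fB_n$, $j=n+1$ case (which is exactly where $p\neq 2$ enters), and the sign identity $\varepsilon_{\bar j\,\bar i}=\varepsilon_{ij}$ needed for $a^R=-a$. The route is, however, not quite the one the paper takes. The paper's proof is a one-line appeal to Remark \ref{directc}: there, the solved defining equations \ref{urs} of $U$ (Lemma \ref{lemmaurs} and Theorem \ref{unique}) show that the unique element of $U$ whose only nonzero entry on $\pUP$ is $\alpha$ at position $(i,j)$ is precisely the matrix $x_{ij}(\alpha)$ listed in the statement; membership in $U$ and the exact form of the forced entries at $(\bar j,\bar i)$ (and at $(i,\bar i)$ in type $\fB_n$) come for free from that parametrization, and the one-parameter property then only requires reading off the $\pUP$-entries of a product and invoking uniqueness. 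You instead verify everything from scratch: $x_{ij}(\alpha)^R=x_{ij}(-\alpha)=x_{ij}(\alpha)^{-1}$ gives membership via the criterion $u^R=u^{-1}$, and the full matrix multiplication gives the group law. Your version is self-contained and does not lean on Theorem \ref{unique}, at the cost of redoing sign bookkeeping that the paper has already encapsulated in \ref{urs}; the paper's version is shorter but only as strong as the reader's willingness to "directly inspect" Remark \ref{directc}. As a bonus, your explicit computation confirms the coefficient $-\tfrac12\alpha^2$ at position $(i,\bar i)$ and thereby exposes a typo in the displayed factorization $\tilde x_{i\bar i}(\tfrac12-\alpha^2)$ in the statement (the text later uses the corrected exponent). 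Only a cosmetic remark: for the type $\fC_n$ case $j=\bar i$ you should state explicitly that $e_{i\bar i}^R=\varepsilon_{i\bar i}e_{i\bar i}=-e_{i\bar i}$, so that the $R$-check goes through there too; you cover it implicitly but it is the one spot where the sign $\varepsilon_{i\bar i}=-1$ is essential rather than decorative.
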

\begin{proof}
This follows immediately from \ref{directc}, (comp. [\cite{carter}, Section 11.3]).
\end{proof}

\begin{Defn}\label{def of generator}
 Let $J \subseteq \pUP$ (resp. $J\subseteq \UR$) be closed.
The {\bfseries pattern subgroup} $U_J$ (resp. $\widetilde U_J$) is defined to be a subgroup of $U$ (resp. $U_N(q)$) generated by all root subgroups $X_{ij}$ (resp. $\tilde X_{ij}$) with $(i,j) \in J$. Then in particular $U_N(q) = \widetilde U_{_ {\widetilde \Phi^+}}$. We denote from now on $U_N(q)$ by $\widetilde U$.
\hfill$\square$
\end{Defn}

The following result is well known (see e.g. [\cite{carter}]):

\begin{Theorem}\label{anyorder}
Let $J$ be defined as in \ref{def of generator} and fix  an arbitrary linear ordering on $J$. Then each $u \in U_J$ (resp. $\widetilde U_J$) can be \emph{uniquely} written as a product of ${x}_{ij}(\lambda)$'s  (resp. $\tilde x_{ij}(\lambda)$'s ) , where $(i,j)$ runs through $J$ and $\lambda$ runs through $\F_q$ with the product taken in that fixed order.\hfill$\square$
\end{Theorem}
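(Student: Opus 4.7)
The plan is to prove existence by a commutator/normal-form argument and uniqueness by a counting argument that extends Theorem~\ref{unique} to arbitrary closed $J$. The existence part rests on Chevalley-type commutator relations for the root subgroups $X_{ij}$ (resp.\ $\widetilde X_{ij}$). By direct matrix computation using Lemma~\ref{rootsubgroupsU}, one checks that for $(i,j),(k,\ell)\in J$ the commutator $[x_{ij}(s),x_{k\ell}(t)]$ is either trivial or a product of root subgroup elements $x_{rs}(\mu)$ attached to positive integer combinations of the two roots lying in $\Phi^+$. Conditions (i) and (ii) of Lemma~\ref{Upattern} are exactly what guarantees that the resulting positions $(r,s)$ again belong to $J$, so each such commutator lies in $U_J$. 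Using these relations any product $x_{i_1j_1}(\lambda_1)\cdots x_{i_mj_m}(\lambda_m)$ of generators can be reordered into the prescribed linear order on $J$: swapping a neighbouring out-of-order pair introduces correction factors whose root positions have strictly larger height, so a bubble-sort induction on height terminates.

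For uniqueness it suffices to prove that $|U_J|=q^{|J|}$. Adapting the recursion of Theorem~\ref{unique} to $J$ (using Corollary~\ref{depend on the left} to recursively eliminate the $R$-forced entries in positions outside $J$) gives the upper bound $|U_J|\leq q^{|J|}$. Conversely, the map $\F_q^J\to U_J$ sending $(\lambda_{ij})\mapsto\prod_{(i,j)\in J}x_{ij}(\lambda_{ij})$, the product being taken in the fixed order, is injective: by Lemma~\ref{rootsubgroupsU} each factor alters only matrix entries at positions that are $\geq (i,j)$ in the natural height partial order, so the coefficients $\lambda_{ij}$ can be read off recursively from the product, starting with the height-minimal position of $J$. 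The map is therefore a bijection, establishing both $|U_J|=q^{|J|}$ and uniqueness of the factorisation.

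The main obstacle will be the type $\fB_n$ case involving the long-root generators $x_{i,n+1}(\alpha)$, which contribute a quadratic term $-\frac{1}{2}\alpha^{2}e_{i\bar i}$ at an antidiagonal position. Commutators involving two such elements produce additional contributions at the associated doubled positions, and one must verify, using condition (ii) of Lemma~\ref{Upattern} together with the hypothesis $p\neq 2$ (so that $\frac{1}{2}\in\F_q$ and the quadratic normalisation makes sense), that those positions lie in $J$ and that the height induction still closes. The case of $\widetilde U_J$ in type $\fA_{N-1}$ is a strict simplification of the above, since only condition (i) of Lemma~\ref{Upattern} is needed and every Chevalley commutator is linear in $s$ and $t$.
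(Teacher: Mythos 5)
The paper does not actually prove this statement: it is introduced with ``The following result is well known (see e.g.\ [Carter])'' and the proof is delegated entirely to the literature (it is Carter's normal-form theorem for unipotent subgroups generated by root subgroups of a closed set of positive roots). Your sketch is, in essence, the standard argument from that source: the Chevalley commutator formula plus closedness of $J$ gives that the set of ordered products is stable under multiplication by generators (existence, via the height induction you describe), and the triangularity of the parametrisation $(\lambda_{ij})\mapsto\prod x_{ij}(\lambda_{ij})$ with respect to the gap $j-i$ gives injectivity (uniqueness). Both halves are correct in outline, and you rightly flag the only delicate points: termination of the reordering (correction factors always sit at roots of strictly greater height, which is bounded), and the type $\fB_n$ long-root elements whose quadratic term lands on the antidiagonal --- harmless for reading off coefficients since $(i,\bar i)\notin\UP$ there, but relevant for the commutator relations and requiring $p\neq 2$.

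One small criticism of the uniqueness section: the step ``adapting the recursion of Theorem \ref{unique} to $J$ \dots gives $|U_J|\leq q^{|J|}$'' is both unnecessary and not justified as stated. That recursion controls entries of elements of $U$ at positions of $\RPC$ in terms of entries at positions of $\pUP$; to get a bound of $q^{|J|}$ for $U_J$ you would additionally need that every element of $U_J$ vanishes at all positions of $\pUP\setminus J$, which is essentially the pattern-subgroup property you are in the course of proving. Fortunately you do not need this bound at all: the direct injectivity argument for the ordered-product map, combined with the surjectivity established in the existence half, already yields the bijection $\F_q^J\to U_J$ and hence uniqueness. I would simply delete the upper-bound sentence and let the injectivity argument carry the uniqueness claim.
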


Obviously, if $J\subseteq \pUP$ is closed, then $J$ satisfies in particular condition (i) of \ref{Upattern}, and hence is closed in $\UR$ too. 

\begin{Defn}\label{patternUtotildeU}
Let $J\subseteq \pUP$. Then $\widetilde J$ denotes the set of positions of $J$ considered as subset of $\UR$ and (abusing notation) $\widetilde U_J$ is the corresponding pattern subgroup of $\widetilde U$ of type $\fA$. Note that obviously $\widetilde\pUP$ is closed in $\UR$ and hence $\widetilde U_{\pUPs}$ is a pattern subgroup of $\widetilde U$. Moreover, $\widetilde U_J$ is always contained in $\widetilde U_{\pUPs}$ for all closed subsets $J$ of $\pUP$.
\hfill$\square$
\end{Defn}

\section{Monomial linearisation of $\C U$}

In this section we shall construct a basis of the group algebra $\C U$ on which $U$ acts monomially. For the general background of monomial linearisation we refer the reader to [\cite{DG3, markus}]. We first need some notation and a few basic definitions:

\begin{Defn}\label{defiV}
For $S\subseteq \square$, define 
$V_S=\{A\in \Mat_{N\times N}(q)\,|\, \supp A\subseteq S\}$. Then  
$V_S=\bigoplus_{(i,j)\in S}\F_q e_{ij}$. Note that  $V_\square=\Mat_{N\times N}(q)$. For the Dynkin type $\fX_n=\fB_n, \fC_n$ or $ \fD_n$, set 
$V=V(\fX_n)=V_{\pUPs}= \bigoplus_{(i,j)\in \pUPs} \F_q e_{ij}$. Thus
\[
V(\fB_n)=V(\fD_n)= \bigoplus_{(i,j)\in \UPs} \F_q e_{ij} \quad \text{ and } \quad
V(\fC_n) = \bigoplus_{(i,j)\in \UPCs} \F_q e_{ij}.
\]\hfill$\square$
\end{Defn}

Note that the so defined vector space $V=V_{\pUPs}$ is in general not invariant under matrix multiplication by elements  of $U$ from the right.

Then $\kappa: V_\square \times V_\square \rightarrow \F_q: (A, B)\mapsto tr(A^tB)$ is a nondegenerate symmetric bilinear form on $V_\square$. For $S\subseteq \square, T=\square\setminus S$ we have 
$V_S^\bot=\{B\in V_\square\,|\, \kappa (A, B)=0,\,\forall\, A\in V_S\}=V_T$.
In addition $\kappa|_{V_S\times V_S}$ is also a nondegenerate symmetric bilinear form. By direct calculation we have:
\begin{Lemma}\label{traceform}
The bilinear form $\kappa$ satisfies   
\[
\kappa(A,B) = tr(A^tB) =  \sum_{(i,j)\in \square} A_{ij}B_{ij}=\sum_{(i,j)\in \supp A\,\cap\, \supp B} A_{ij}B_{ij}.
\]
and $\kappa(B^tA, C)=\kappa(A, BC)=\kappa(AC^t,B)$ for all $A,B, C\in V_\square$. Moreover, if $A, B\in V_\square$ with $\supp A\cap \supp B=\emptyset$, then $\kappa(A,B)=0$.\hfill$\square$
\end{Lemma}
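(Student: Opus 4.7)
The proof is essentially a direct unpacking of definitions together with the standard properties of trace and transpose, so the plan is to verify each of the claimed identities in turn.

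The plan is to begin with the first chain of equalities. Starting from $\kappa(A,B) = tr(A^tB)$, I would expand the trace by definition as $\sum_i (A^tB)_{ii}$, then expand matrix multiplication as $(A^tB)_{ii} = \sum_j (A^t)_{ij} B_{ji} = \sum_j A_{ji} B_{ji}$. Reindexing the double sum gives $\sum_{(i,j)\in\square} A_{ij}B_{ij}$. The collapse to $\sum_{(i,j)\in\supp A \cap \supp B} A_{ij} B_{ij}$ is then immediate, since any $(i,j)\notin \supp A$ contributes $A_{ij}=0$ and likewise for $B$. The very last assertion (that $\kappa(A,B)=0$ whenever $\supp A \cap \supp B = \emptyset$) falls out for free, as the summation range is empty.

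For the middle identities $\kappa(B^tA,C) = \kappa(A,BC) = \kappa(AC^t,B)$, I would invoke the cyclic property $tr(XY)=tr(YX)$ together with $(XY)^t = Y^tX^t$. Concretely, $\kappa(B^tA,C) = tr\bigl((B^tA)^t C\bigr) = tr(A^t B C) = \kappa(A,BC)$. For the second equality, $\kappa(AC^t,B) = tr\bigl((AC^t)^t B\bigr) = tr(C A^t B)$, which by cyclicity equals $tr(A^t B C) = \kappa(A,BC)$.

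Since each of these steps is a one-line manipulation built from elementary properties of trace and transpose, there is no genuine obstacle; the only thing to watch out for is being careful with the transpose of a product when rewriting $\kappa(AC^t,B)$. All three factors $A, B, C$ here are arbitrary elements of $V_\square = \Mat_{N\times N}(q)$, so no compatibility hypotheses need checking.
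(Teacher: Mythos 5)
Your proof is correct and matches the paper's approach: the paper simply asserts the lemma "by direct calculation," and your expansion of the trace, the collapse to the common support, and the use of $(XY)^t=Y^tX^t$ together with cyclicity of the trace are exactly that calculation carried out. All steps check out.
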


For $S\subseteq \square$ denote the natural projection $V_\square\rightarrow V_S$ with kernel
$V_S^\bot=\{A\in V_\square\,|\, \supp A\cap S=\emptyset\}$ by $\pi_{_S}$. 

Note that $A\tilde x_{ij}(\alpha)$ for $A\in \Mat_{N\times N}(q), (i,j)\in \tilde \Phi$ and $\alpha\in \F_q$ is obtained by adding $\alpha$ times column $i$ to column $j$ in $A$. Similarly $\tilde x_{ij}(\alpha)A$ is obtained by adding $\alpha$ times row $j$ to row $i$ in $A$. Since $\widetilde U = U_N(q)$ is generated by the root subgroups $\tilde X_{ij}, (i,j)\in \tilde \Phi^+=\UR$, one proves easily that $\supp (Ag^t)\subseteq \pUP\cup \tilde  \Phi^- \cup\diag \text{ and }\supp (Ag)\subseteq \UR=\tilde \Phi^+$ for $A\in V_{\pUPs}, \, g\in U_N(q)$.

\begin{Prop}\label{UactsAuto} Let $V=V_{\pUPs}, \pi=\pi_{\pUPt}$. Then the map
\[ V \times \widetilde U \rightarrow  V:\; (A,u) \mapsto A. u := \pi(Au),  \]
defines a group action, where the elements of $\widetilde U$ act as ($\F_q$-vector space) automorphisms. 
\end{Prop}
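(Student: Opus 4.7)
The plan is to check the three requirements built into the statement---well-definedness, the two group-action axioms, and linearity of each individual operator $A \mapsto A.u$. Well-definedness and the identity axiom are essentially free: $\pi$ lands in $V$ by construction, and for $A \in V$ we have $A.1 = \pi(A) = A$. The substance of the proposition lies in associativity, that is, $\pi\bigl(\pi(Au)\,v\bigr) = \pi(Auv)$ for all $A \in V$ and $u,v \in \widetilde U$.

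To handle associativity I would decompose $Au = \pi(Au) + C$ with $C := Au - \pi(Au)$. By the paragraph preceding the statement, $\supp(Au) \subseteq \UR$, hence $\supp(C) \subseteq \UR \setminus \pUP$. Associativity therefore reduces to the single compatibility claim
\begin{equation*}
\pi(Cv) = 0 \qquad \text{whenever } \supp(C) \subseteq \UR \setminus \pUP \text{ and } v \in \widetilde U.
\end{equation*}
By $\F_q$-linearity of both $\pi$ and right multiplication, it suffices to verify this for a matrix unit $C = e_{km}$ with $(k,m) \in \UR \setminus \pUP$. A direct computation gives $(e_{km} v)_{k\ell} = v_{m\ell}$, with all other rows zero. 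Hence the claim becomes: $v_{m\ell} = 0$ whenever $(k,\ell) \in \pUP$. Since $v$ is upper unitriangular, $v_{m\ell} \neq 0$ already forces $m \leq \ell$, and a short case split on Dynkin type finishes the argument: in types $\fB_n, \fD_n$ one has $\UR \setminus \pUP = \RPC$ and $\pUP = \UP$, so $(k,m) \in \RPC$ gives $m \geq \bar k$ while $(k,\ell) \in \UP$ gives $\ell < \bar k$, contradicting $m \leq \ell$; in type $\fC_n$ the corresponding pairing is $\RP$ versus $\UPC$, and the strict/non-strict inequalities in the definitions align to give the same contradiction.

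This combinatorial compatibility lemma is the main obstacle and the only point at which the precise shape of $\pUP$ enters---the inequalities defining $\UP$, $\RP$ and $\CC$ are exactly what is needed to guarantee that the row-reach of a unitriangular column operation on $\UR \setminus \pUP$ never meets $\pUP$. Once associativity is in hand, each operator $A \mapsto A.u$ is $\F_q$-linear because $\pi$ and right multiplication by $u$ are, and it is invertible with inverse $A \mapsto A.u^{-1}$ by the group-action property; hence $\widetilde U$ acts by $\F_q$-vector space automorphisms, as asserted.
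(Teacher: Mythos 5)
Your proof is correct, and it takes a genuinely different route from the paper's. The paper proves associativity by duality: it pairs $A.(uv)$ against an arbitrary $B\in V$ via the nondegenerate form $\kappa$, uses the adjointness $\kappa(B,Auv)=\kappa(Bv^t,Au)$, and then exploits the support inclusions $\supp(Bv^t)\subseteq \pUP\cup\tilde\Phi^-\cup\diag$ and $\supp(Au)\subseteq\UR$ together with the identity $(\pUP\cup\tilde\Phi^-\cup\diag)\cap\UR=\pUP$ to insert the projections $\pi$ at will; nondegeneracy of $\kappa$ on $V$ then yields $A.(uv)=(A.u).v$. You instead work directly on the module side: you split off the error term $C=Au-\pi(Au)$ with $\supp(C)\subseteq\UR\setminus\pUP$ and reduce to the matrix-unit computation that $\pi(e_{km}v)=0$ for $(k,m)\in\UR\setminus\pUP$, which you verify correctly via the inequalities $\ell<\bar k\leq m$ (types $\fB_n,\fD_n$) and $\ell\leq\bar k< m$ (type $\fC_n$) against the unitriangularity constraint $m\leq\ell$. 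The two arguments encode the same combinatorial fact --- that right multiplication by $\widetilde U$ cannot move mass from $\UR\setminus\pUP$ back into $\pUP$ --- but the paper's $\kappa$-based formalism is set up once and then reused verbatim for the cocycle identity in Theorem \ref{fonecocycle}, whereas your version is more elementary and self-contained and makes the role of the defining inequalities of $\UP$, $\CC$, $\RP$ completely explicit. Your handling of well-definedness, the identity axiom, linearity, and invertibility matches the paper's (brief) treatment.
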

\begin{proof}
We have to show $A. (uv)=(A. u). v$ for $A\in V$ and  $u, v\in \widetilde U$. Observe that  $\supp(Bv^t)\subseteq \pUP\cup \tilde \Phi^-\cup\diag $  for all $B\in V$ and $\supp (Au)\subseteq \UR$. Moreover $\pi(B)=B$ and $(\pUP\cup \tilde \Phi^-\cup\diag)\cap \UR=\pUP$, and hence we obtain using \ref{traceform}:
\begin{eqnarray*}
\kappa\big(B, A. (uv)\big)&=&\kappa\big(B,\pi( Auv)\big)=\kappa(B, Auv)\\
&=&
\kappa(Bv^t, Au)=\sum\nolimits_{i,j\in \pUPs} (Bv^t)_{ij}(Au)_{ij}\\&=&\kappa(Bv^t, \pi(Au))=\kappa(Bv^t, A.u)\\
&=&\kappa(B, (A. u)v)=\kappa(\pi(B), (A. u)v)=\kappa(B, \pi((A. u)v))\\
&=&\kappa\big(B, (A. u).v\big) \quad \text{   for all } B\in V.
\end{eqnarray*}
Since $\kappa$ is nondegenerate on $V$ we conclude $A. uv=(A.u). v$ as desired. Obviously $u$ acts $\F_q$-linearly on $V$, since $\pi$ is $\F_q$-linear, and hence $u$ acts as automorphism.
\end{proof}

\begin{Defn}\label{deff}
Define
$f: \widetilde U\rightarrow V$ to be the restriction of the projection map $\pi=\pi_{\pUPt}$ to $\widetilde U$.\hfill$\square$
\end{Defn}

\begin{Theorem}\label{fonecocycle}
$f$  satisfies $f(uv) = f(u).v+f(v)$ and hence is a right 1-cocycle  (see [\cite{markus,DG3}]) . Moreover $f$ is surjective and  $f|_U$ is bijective.
\end{Theorem}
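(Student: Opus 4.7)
The plan is to treat the three assertions separately, with the 1-cocycle condition being the substantive content and the remaining two claims following formally.

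For the cocycle identity, I would expand $uv = 1 + (u-1) + (v-1) + (u-1)(v-1)$ and apply $\pi$ using its $\F_q$-linearity and the fact that $\pi(1) = 0$ (the diagonal is disjoint from $\pUP$). This yields
\[
\pi(uv) = \pi(u) + \pi(v) + \pi\bigl((u-1)(v-1)\bigr).
\]
On the other hand, $f(u).v = \pi(\pi(u)\,v) = \pi(\pi(u)) + \pi(\pi(u)(v-1)) = \pi(u) + \pi(\pi(u)(v-1))$ using that $\pi$ is idempotent and $\pi(u) \in V$. Subtracting, the cocycle identity $f(uv) = f(u).v + f(v)$ reduces to
\[
\pi\bigl((u - 1 - \pi(u-1))(v-1)\bigr) = 0.
\]
The matrix $u - 1 - \pi(u-1)$ is supported on $\UR \setminus \pUP$, which equals $\CC \cup \RP$ for types $\fB_n, \fD_n$ and $\RP$ for type $\fC_n$. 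In both cases support is contained in positions $(i,j)$ with $j \geq \bar i$.

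The main obstacle is verifying this vanishing, which rests on the following geometric observation: if $(i,j) \in \UR$ with $j \geq \bar i$ and $(j,k) \in \UR$ (so $j < k$), then $k > j \geq \bar i$, so $(i,k) \notin \UP$ (since $\UP$ requires $k < \bar i$) and $(i,k) \notin \UPC$ (since $\UPC$ requires $k \leq \bar i$). Hence every entry of the product $(u - 1 - \pi(u-1))(v-1)$ lies outside $\pUP$, and its projection vanishes. This completes the proof of the cocycle identity.

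For surjectivity, given any $A \in V = V_{\pUPs} \subseteq V_{\UR}$, the element $1 + A$ is upper unitriangular, hence lies in $\widetilde U$, and $f(1 + A) = \pi(1 + A) = \pi(A) = A$ since $A$ is already supported in $\pUP$. Finally, $f|_U$ maps $U$ into $V$ since $f = \pi|_{\widetilde U}$, and Theorem \ref{unique} asserts exactly that the assignment $u \mapsto (u_{ij})_{(i,j) \in \pUP}$ is a bijection between $U$ and $\F_q^{|\pUP|}$; under the identification $V \cong \F_q^{|\pUP|}$ via the basis $\{e_{ij} : (i,j) \in \pUP\}$, this bijection is precisely $f|_U$. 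All non-trivial input therefore comes from two previously established results: the geometric description of $\pUP$ versus $\UR$, and the parametrisation of $U$ from Theorem \ref{unique}.
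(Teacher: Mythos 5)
Your proof is correct, but it takes a genuinely different route from the paper's for the cocycle identity. The paper tests the identity against an arbitrary $B\in V$ using the nondegenerate trace form $\kappa$ and the adjointness $\kappa(Ag^t,x)=\kappa(A,xg)$, reducing everything to the support observation $\supp(Ag^t)\cap\supp(x-1)\subseteq\pUP$; nondegeneracy of $\kappa$ on $V$ then forces the identity. You instead compute directly: expanding $uv=1+(u-1)+(v-1)+(u-1)(v-1)$ and using idempotence and linearity of $\pi$, you reduce the cocycle condition to $\pi\bigl((u-1-\pi(u-1))(v-1)\bigr)=0$, and verify this by the observation that a position $(i,j)$ with $j\geq\bar i$ composed with a strictly upper-triangular position $(j,k)$ lands at $(i,k)$ with $k>\bar i$, hence outside $\pUP$ in all three types. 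The two support facts are essentially adjoint to one another, so the geometric content is the same, but your version avoids the duality detour and is arguably more transparent; the paper's version has the advantage of recycling the exact same $\kappa$-manipulation already set up in the proof of Proposition \ref{UactsAuto}. For surjectivity your argument ($f(1+A)=A$ since $A$ is strictly upper triangular) is more direct than the paper's appeal to Theorem \ref{unique}, and for bijectivity of $f|_U$ you invoke Theorem \ref{unique} exactly as intended. No gaps.
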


\begin{proof}
Let $x, g\in \widetilde U$, $A\in V$. Then, since $A=\pi(A)$ and $\pi(x)=\pi(x-1)$, we have
 \begin{eqnarray*}
 \kappa(A, f(x). g)&=& \kappa\big(A, \pi(f(x)g)\big)=\kappa\big(A, f(x)g\big)\\&=& \kappa (Ag^t, f(x))=\kappa\big(Ag^t, \pi(x) \big)=\kappa\big(Ag^t, \pi(x-1) \big)\\&=&\kappa(Ag^t, x-1)=\kappa(A, xg-g)=
 \kappa\big(A, \pi(xg)-\pi(g)\big)\\&=&\kappa\big(A, f(xg)-f(g)\big),
 \end{eqnarray*}
using $\supp (Ag^t)\subseteq \pUP\cup \tilde  \Phi^-\cup\diag$, $\supp(x-1)\subseteq \UR=\tilde \Phi^+$ and hence $\supp (Ag^t)\cap \supp(x-1)\subseteq \pUP$.
 Thus, since $\kappa$ is nondegenerate on $V$, we conclude
 $f(x). g=f(xg)-f(g)$ and hence $ f(xg)=f(x). g+f(g),$ that is 
 $f$ is a right 1-cocycle. By theorem \ref{unique} it is easy to see $f$ is surjective and  that $f|_U$ is bijective.
\end{proof}

Henceforth we denote for any abelian group $H$ the set of complex linear characters of $H$ by $\hat H$. Thus in particular $\hat V$ is the set of linear characters of the additive group $(V,+)$.  For $A\in V$, the map $\tau_A=\kappa(A,-): V\mapsto \F_q: B\mapsto \kappa(A,B)\in \F_q$ defines an element of the dual space $V^*=\Hom_{\F_q}(V, \F_q)$ and the map $\tau: V\mapsto V^*:A\mapsto \tau_A\in V^*$ is an $\F_q$-isomorphism.
Note that 

\begin{equation}\label{tausubA}
\tau_A = \sum_{(i,j)\in \pUPt}A_{ij}e^*_{ij},
\end{equation}
where $e_{ij}^*$ denotes the $(i,j)$-th coordinate function on $V$ for $(i,j)\in\pUP$, dual to the matrix unit $e_{ij}$ in the natural basis of $V$. 

\bigskip

Throughout $\theta:\F_q\rightarrow \C^*$ denotes a fixed non trivial linear character  of the additive group $(\F_q,+)$. Then the map $\theta\circ-: V^*\rightarrow \hat V: \eta\mapsto \theta\circ \eta\in \hat V$ for any $\eta\in V^*$ is bijective and we have  
\begin{equation}\label{hatV}
\hat V=\{\chi_A=\theta\circ \tau_A\,|\, A\in V\}.
\end{equation}

Thus using \ref{tausubA} we have
\begin{equation}\label{actionofchisubA}
\chi_A(B) = \theta\circ\tau_A(B) = \theta\Big(\sum_{(i,j), (r,s)\in\pUPt}A_{ij}B_{rs}e_{ij}^*(e_{rs})\Big) = \prod_{(i,j)\in\pUPt}\theta(A_{ij}B_{ij}) = \theta(\kappa(A,B)).
\end{equation}
Now $\widetilde U$ acts on $\hat V$ by 
$\chi.u: A\mapsto \chi(A.u^{-1})$  for $u\in G, \chi\in \hat V$ and $A\in V$.
Note that identifying the group algebra $\C V$ with the $\C$-algebra $\C^V$ of maps from $V$ to $\C$ by $\tau\mapsto \sum _{v\in V}\tau(v)v$ for $\tau\in \C ^V$, the linear character $\chi\in \hat V$ is mapped to $\sum_{v\in V}\chi(v)v=|V|e_{\bar \chi}$, where $e_{\bar \chi}\in \C V$ is the primitive idempotent affording the complex conjugate linear character $\bar \chi\in \hat V$. Thus the group algebra $\C V\cong \C ^V$ has $\C$-basis $\hat V$, and hence $\C \hat V\cong \C V \cong \C^V$.

We define $f^*: \C^V\rightarrow \C^{\widetilde U}\cong \C \widetilde U$ by  $f^*(\tau)=\tau\circ f$ for $\tau\in \C^V$. Then it is not hard to see, that $f^* = f|_U^{-1}$ is injective with $f^*(\C \hat V) \cong f^*(\C^V) = \C U \subseteq \C\widetilde U$ is an $\F_q$-isomorphism. 
We apply [\cite{markus}, 2.1.35] and [\cite{DG3}] to obtain:

\begin{Theorem}\label{monomial} The full upper unitriangular group $\widetilde U = U_N(q)$ acts monomially on $\hat V = \hat V_{\pUPs}$, where the action of $u\in \widetilde U$ on $\chi_A\in\hat V$ for $A\in V$ is given as 
\[
\chi_{_A}u=z \chi_{_A}.u=z \chi_{_{A.u^{-t}}}=z {\chi_{\pi_{_{\pUPt}}(Au^{-t})}},
\]
with $z=\theta(\kappa(A, \pi_{_{\pUPt}}(u^{-1})))=\chi_{_{A}}(f(u^{-1}))\in \C^*$. Moreover the restriction of $f$ to $U$ is bijective. The restriction to $U$ of the monomial action of $\widetilde U$ on $\hat V$ induces a $U$-isomorphism between $\C\hat V$ and the right regular representation $\C U_{\C U}$ given by  $f^* = f|_U^{-1}$, where  
\[ 
f^*(\chi_{_A}) = \sum_{u\in U}\theta\circ\kappa(A,u)u\in \C U \quad \text{for} A\in V.
\]
In particular 
\[ 
\{ \sum_{u\in U}\theta\circ\kappa(A,u)u\,|\, A\in V\}\subseteq \C U
\]
is a monomial basis of the group algebra $\C U$.\hfill$\square$
\end{Theorem}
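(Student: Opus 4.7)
My plan is to apply the monomial linearisation framework of [\cite{markus},~2.1.35] (cf.~[\cite{DG3}]) to the data already assembled: the right action of $\widetilde U$ on $V$ by $\F_q$-linear automorphisms (Proposition~\ref{UactsAuto}), the right 1-cocycle $f:\widetilde U\to V$ (Theorem~\ref{fonecocycle}), and the bijectivity of $f|_U:U\to V$ (Theorem~\ref{fonecocycle}, using the count $|U|=q^{|\pUPs|}$ from Theorem~\ref{unique}). Concretely the proof boils down to computing the induced action of $\widetilde U$ on $\hat V$ explicitly, verifying that the map $F:\C\hat V\to\C U$, $\chi_A\mapsto\sum_{u\in U}\theta(\kappa(A,u))\,u$, intertwines this action (restricted to $U$) with right regular multiplication, and finally showing $F$ is bijective.

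First, the natural right action $\chi.u(A):=\chi(A.u^{-1})$ of $\widetilde U$ on $\hat V$ is well defined since each $u$ acts on $V$ as an $\F_q$-automorphism. For $\chi=\chi_A$, $A\in V$, and arbitrary $B\in V$, repeatedly using $\kappa(X,Y)=\kappa(X,\pi(Y))$ whenever $X\in V$ together with Lemma~\ref{traceform} gives
\[
\chi_A.u(B)=\theta(\kappa(A,B.u^{-1}))=\theta(\kappa(A,Bu^{-1}))=\theta(\kappa(Au^{-t},B))=\theta(\kappa(\pi(Au^{-t}),B)).
\]
Hence $\chi_A.u=\chi_{\pi(Au^{-t})}$, establishing the second and third equalities of the action formula.

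Next I would produce the scalar factor via the cocycle identity $f(vu^{-1})=f(v).u^{-1}+f(u^{-1})$ of Theorem~\ref{fonecocycle}. Evaluating $\chi_A$ on both sides (and using that $\chi_A$ is a character of the additive group $(V,+)$) yields
\[
\chi_A(f(vu^{-1}))=\chi_A(f(v).u^{-1})\cdot\chi_A(f(u^{-1}))=(\chi_A.u)(f(v))\cdot z,
\]
with $z:=\chi_A(f(u^{-1}))=\theta(\kappa(A,\pi(u^{-1})))$. Define the monomial action by $\chi_A u:=z\cdot\chi_A.u$. Associativity $(\chi_A u)v=\chi_A(uv)$ follows from applying the same cocycle identity once more to $f(v^{-1}u^{-1})$ and is a short calculation. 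Now, using $\chi_A(f(u))=\theta(\kappa(A,u))$ (since $A\in V$), one has $F(\chi_A)\cdot u=\sum_{v\in U}\chi_A(f(v))\,vu=\sum_{w\in U}\chi_A(f(wu^{-1}))\,w$ after the reindexing $w=vu$, legitimate because $u\in U$. The displayed identity above then gives $F(\chi_A)\cdot u=F(\chi_A u)$, so $F$ is $U$-equivariant.

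Finally, $F$ is $\C$-linear between spaces of equal dimension $|V|=|U|=q^{|\pUPs|}$; it is injective because $f|_U$ is a bijection and because $\hat V=\{\chi_A:A\in V\}$ is linearly independent in $\C^V$ by orthogonality of characters of $(V,+)$. Hence $F$ is a $U$-module isomorphism onto the right regular representation, and its image $\{\sum_{u\in U}\theta(\kappa(A,u))\,u:A\in V\}$ is a $\C$-basis of $\C U$ permuted monomially by the right $U$-action. The main technical obstacle, I expect, is the bookkeeping around $\pi=\pi_{\pUPs}$: the identity $\kappa(X,Y)=\kappa(X,\pi(Y))$ for $X\in V$ and the distinction between the ordinary matrix product $Bu^{-1}$ and the twisted action $B.u^{-1}=\pi(Bu^{-1})$ must be invoked correctly at every manipulation; once one is disciplined about this, the cocycle formula of Theorem~\ref{fonecocycle} does the rest mechanically.
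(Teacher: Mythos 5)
Your proposal is correct and follows essentially the same route as the paper: the paper verifies the hypotheses (Proposition~\ref{UactsAuto}, Theorem~\ref{fonecocycle}, bijectivity of $f|_U$) and then simply cites the general monomial linearisation theorem [\cite{markus}, 2.1.35], whose content is exactly the cocycle/equivariance computation you carry out explicitly. All your manipulations with $\pi$ and $\kappa$ check out, so nothing further is needed.
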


\begin{Remark}\label{permmonom} Note that the dot action $\chi.u: A\mapsto \chi(A.u^{-1})$  for $u\in \widetilde U, \chi\in \hat V$ and $A\in V$ above is precisely the permutation representation of $ \widetilde U$ underlying the monomial action on $\hat V$ of \ref{monomial}.\hfill$\square$
\end{Remark}

\begin{Remark}\label{complements}
A quick calculation reveals, that $\pUP$ and $J= \tilde{\Phi}^+\setminus \pUP$ are closed subsets of $\tilde{\Phi}^+$.  Thus $\widetilde U_{\pUPs}$ and $\widetilde U_J$ are pattern subgroups of $\widetilde U$. Observe that obviously $\ker f = \{u\in\widetilde U\,|\, f(u)=0\} = \widetilde U_J$. Morover, both $U$ and $\widetilde U_{\pUPs}$ are complements of $\widetilde U_J$ in $\widetilde U$, that is $\widetilde U=\widetilde  U_{\pUPs} \widetilde U_J=\widetilde U_J\widetilde U_{\pUPs}$ and  $\widetilde U= U \widetilde U_J=\widetilde U_J U$.
\hfill$\square$
\end{Remark}

 By [\cite{embed}] we have the following result:

\begin{Theorem} \label{ChatVIsInd}    As $\C \widetilde U$-module
 $\C \hat V\cong\Ind^{\widetilde U}_{\widetilde U_J} \C_{\widetilde U_J}$, where $\C_{\widetilde U_J}$ is the trivial $\C \widetilde  U_J$-module.\hfill$\square$
\end{Theorem}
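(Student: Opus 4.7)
The approach is Frobenius reciprocity: find a $\widetilde U_J$-fixed vector in $\C\hat V$, use it to define a $\C\widetilde U$-homomorphism out of $\Ind_{\widetilde U_J}^{\widetilde U}\C_{\widetilde U_J}$, and verify bijectivity by a dimension count plus a linear-independence argument.

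The natural candidate is $v_0 := \sum_{A\in V}\chi_A$. For $h\in\widetilde U_J = \ker f$ (Remark \ref{complements}) the monomial scalar in Theorem \ref{monomial} equals $\chi_A(f(h^{-1})) = \theta(\kappa(A,0)) = 1$, so $h$ acts on $\hat V$ as the pure permutation $\chi_A \mapsto \chi_A.h$. Since $A\mapsto A.h$ is a bijection of $V$ by Proposition \ref{UactsAuto}, $v_0$ is $\widetilde U_J$-invariant, and Frobenius reciprocity yields a $\C\widetilde U$-homomorphism
\[
\Phi:\ \Ind_{\widetilde U_J}^{\widetilde U}\C_{\widetilde U_J}\ \longrightarrow\ \C\hat V,\qquad 1\otimes g\ \longmapsto\ v_0\cdot g.
\]

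The next step is to derive the explicit formula $v_0\cdot g = \sum_{B\in V}\theta\bigl(-\kappa(B,f(g))\bigr)\chi_B$. This uses the pairing identity $\kappa(A.u^{-t},B) = \kappa(A,B.u^{-1})$ for $u\in\widetilde U$ and $A,B\in V$, a direct consequence of Lemma \ref{traceform}, combined with the cocycle consequence $f(g^{-1}).g = -f(g)$ obtained by plugging $g^{-1}g=1$ into Theorem \ref{fonecocycle}. Reindexing $v_0\cdot g = \sum_{A}\theta(\kappa(A,f(g^{-1})))\chi_{A.g^{-t}}$ via the bijection $A\mapsto A.g^{-t}$ of $V$ then gives the formula. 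In particular $\Phi(1\otimes g)$ depends only on $f(g)$, hence only on the right coset $\widetilde U_J g$, so $\Phi$ is well-defined.

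For bijectivity, Remark \ref{complements} says $U$ is a transversal for $\widetilde U_J\backslash\widetilde U$, so $\{1\otimes u \mid u\in U\}$ is a basis of the induced module of cardinality $|U| = |V| = \dim\C\hat V$ (using that $f|_U:U\to V$ is bijective). As $u$ ranges over $U$, $f(u)$ ranges bijectively over $V$, so the images $\Phi(1\otimes u) = \sum_B\theta(-\kappa(B,f(u)))\chi_B$ form a basis of $\C\hat V$ if and only if the matrix $\bigl(\theta(-\kappa(B,A))\bigr)_{A,B\in V}$ is invertible. This matrix is precisely the character table of the finite abelian group $(V,+)$ under the self-duality $V\cong\hat V$ provided by the non-degenerate pairing $\kappa$, hence is non-singular by the standard orthogonality of characters; injectivity together with matching dimensions then forces $\Phi$ to be an isomorphism. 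The main obstacle is the sign-and-transpose bookkeeping needed to derive the formula for $v_0\cdot g$; once that is established, the remainder is standard finite-Fourier analysis together with a dimension count.
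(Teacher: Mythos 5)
The paper itself gives no proof of Theorem \ref{ChatVIsInd}; it simply cites [\cite{embed}]. So there is no in-paper argument to compare against, and your self-contained derivation is welcome.

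Your proof is correct. The key observations all check out: $\widetilde U_J = \ker f$ (Remark \ref{complements}) makes the monomial coefficient $\chi_{_A}(f(h^{-1}))=\theta(0)=1$ trivial for $h\in\widetilde U_J$, so $h$ acts on $\hat V$ by a pure permutation and $v_0=\sum_{A\in V}\chi_{_A}$ is $\widetilde U_J$-fixed; Frobenius reciprocity then produces $\Phi$. The reindexing that yields $v_0 g = \sum_{B}\theta\bigl(-\kappa(B,f(g))\bigr)\chi_{_B}$ follows exactly as you say from $\kappa(A.u^{-t},B)=\kappa(A,B.u^{-1})$ (Lemma \ref{traceform}) and the cocycle identity $f(g^{-1}).g=-f(g)$; one also needs, to justify the various insertions of $\pi$, the support containments $\supp(Ag^{t})\subseteq\pUP\cup\widetilde\Phi^-\cup\diag$ and $\supp(Ag)\subseteq\widetilde\Phi^+$ recorded just before Proposition \ref{UactsAuto}, which you implicitly use. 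The dimension count $[\widetilde U:\widetilde U_J]=|U|=|V|$ is exactly Remark \ref{complements} plus bijectivity of $f|_U$, and the non-singularity of $\bigl(\theta(-\kappa(B,A))\bigr)_{A,B\in V}$ is indeed character orthogonality for the abelian group $(V,+)$ under the nondegenerate pairing $\kappa$. One small imprecision: for $h\in\widetilde U_J$ the permutation of indices underlying $\chi_{_A}\mapsto\chi_{_A}.h$ is $A\mapsto\pi(Ah^{-t})$ (the map dual via $\kappa$ to $B\mapsto B.h^{-1}$), not $A\mapsto A.h=\pi(Ah)$ as you cite from Proposition \ref{UactsAuto}; both are bijections of $V$, so the invariance of $v_0$ is unaffected, but the referenced map is not the one actually acting. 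Also, the ``so $\Phi$ is well-defined'' remark is redundant once you have invoked Frobenius reciprocity with the invariant vector $v_0$; the observation that $\Phi(1\otimes g)$ depends only on $f(g)$ is a consequence, not a prerequisite.
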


\begin{Notation}\label{lide}
For $A\in V$ we identify from now on $\chi_{_{-A}} \in \C^V$ with $[A]:=\sum_{B\in V}\overline{ \chi_{_A}(B)} B\in \C V$. We set $[A]_{ij}=A_{ij}$ for $(i,j)\in \pUP$. Thus $\hat V=\{[A]\,|\,A\in V\} \subseteq \C V$.  Let $A\in V, u\in \widetilde U$. Then applying theorem \ref{monomial} we  get $[A].u=[\pi_{\pUPt}(Au^{-t})]$. Throughout we call the the elements $[A]\in \hat V$ linear characters or simply characters. However note that those are linear characters of the additive group $(V,+)$ and not of $U$.
 \hfill$\square$
\end{Notation}

 Let $1\leqslant i<j\leqslant N, \alpha\in \F_q$ and $A\in V$. Then by theorem \ref{monomial} we have 
\begin{equation}\label{rootelmaction}
[A]\tilde x_{ij}(\alpha)=\theta(\kappa(-A, \pi_{_{\pUPs}}(\tilde x_{ij}(\alpha)^{-1})))[B]=
\begin{cases}
\theta(\alpha A_{ij})[B] & \text{ if } (i,j)\in \pUP\\
[B] & \text{ otherwise.}
\end{cases}
\end{equation}
where $B=\pi_{_{\pUPs}} (A\tilde x_{ji}(-\alpha))$. But $A\tilde x_{ji}(-\alpha)$ is obtained from $A$ by adding $-\alpha$ times column $j$ to column $i$ in $A$. We have shown:
\begin{Prop}\label{TruncatedColumnOperationG}
Let $A\in V$ and let $\tilde x_{ij}(\alpha)\in \widetilde U$ with $1\leqslant i<j\leqslant N$ and $\alpha\in \F_q$. Then $[A].\tilde x_{ij}(\alpha)$ arises from $A$ by adding $-\alpha$ times column $j$ to column $i$ in $A$ and setting entries outside of $\pUP$ to zero. \hfill$\square$
\end{Prop}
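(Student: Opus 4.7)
The plan is to unpack the formula from Theorem \ref{monomial}, which gives $[A].u = [\pi_{\pUPt}(A u^{-t})]$, specialised to $u = \tilde x_{ij}(\alpha)$. First I would compute $u^{-t}$: since $e_{ij}^2 = 0$ when $i \neq j$ one has $\tilde x_{ij}(\alpha)^{-1} = 1 - \alpha e_{ij}$, and transposing gives $u^{-t} = 1 - \alpha e_{ji}$.

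Next I would read off the effect of right multiplication by $u^{-t}$ on $A$. A direct matrix-unit calculation gives $(Ae_{ji})_{rs} = A_{rj}\delta_{is}$, so $Ae_{ji}$ is the matrix with column $j$ of $A$ placed into its $i$-th column and zero elsewhere. Consequently $A u^{-t} = A - \alpha A e_{ji}$ is precisely the matrix obtained from $A$ by adding $-\alpha$ times column $j$ into column $i$ and leaving every other column unchanged.

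Finally I would apply the projection $\pi_{\pUPt}$, which by definition sets every entry at a position outside $\pUP$ to zero. Setting $B := \pi_{\pUPt}(A u^{-t})$, Theorem \ref{monomial} then yields $[A].\tilde x_{ij}(\alpha) = [B]$, which is exactly the description in the statement.

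The whole argument is a one-line specialisation of Theorem \ref{monomial}; the only point requiring care is distinguishing the permutation ``dot'' action $[A].u$ targeted here from the full monomial action $[A]u$, which carries the additional scalar factor $\theta(\alpha A_{ij})$ when $(i,j) \in \pUP$ (and $1$ otherwise, as recorded in equation \ref{rootelmaction}). I do not anticipate any genuine obstacle.
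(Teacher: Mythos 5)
Your proposal is correct and is essentially identical to the paper's own argument: the paper likewise specialises Theorem \ref{monomial} to $u=\tilde x_{ij}(\alpha)$, observes that $u^{-t}=\tilde x_{ji}(-\alpha)$ so that right multiplication adds $-\alpha$ times column $j$ to column $i$, and then applies $\pi_{\pUPt}$. Your closing remark correctly separates the permutation action from the monomial action with its scalar $\theta(\alpha A_{ij})$, exactly as recorded in equation \ref{rootelmaction}.
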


We call the permutation action above  {\bf restricted column operation}. 

\begin{Remark}\label{compTypeA} The action of $\widetilde{U}$ on $\hat V$ in theorem \ref{monomial} yields part of the Andr\'{e}-Yan supercharacters. More precisely, the map $\tilde f:\widetilde U\rightarrow V_{\URs}: u\mapsto u-1$ is a (left and right) $1$-cocycle and  yields a monomial linearisation of $\widetilde U$ (see e.g. [\cite{DG3}]). Moreover there is a natural map from $\hat V_{\URs}$ to $\hat V_{\pUPs}$ given by restriction of maps. Now \ref{TruncatedColumnOperationG} implies that this map is $\C\widetilde U$-linear. In addition $\hat{\widetilde V}_{\pUPs} = \{[\widetilde A]\in\hat V_{\URs}\,|\,\supp(A)\subseteq \pUP\}$ is a $\C\widetilde U$-submodule of $\hat V_{\pUPs}$ which is isomorphic to $\hat V_{\pUPs}$ under the restriction map above. Thus we may identify the $\C\widetilde U$-modules $\hat V_{\pUPs}$ and $\hat{\widetilde V}_{\pUPs}$.  
\hfill$\square$
\end{Remark}

\begin{Remark}\label{4action}
In illustration to come we picture linear characters $[A]\in\hat V_{\pUPs}$ for $A\in V$ as triangular shaped arrays of elements of $\F_q$, omitting from matrix $A$ all entries zero at positions not in $\pUP$. Moreover, indices placed just below the diagonal  denote both, row and column index of the corresponding matrix $A$.
By lemma \ref{rootsubgroupsU} the elements of $X_{ij}$ with $(i,j)\in \pUP$ can be written as products of elements of certain root subgroups $\tilde X_{st}\in \widetilde U$ with $(s,t)\in \tilde \Phi^+$. Combining this with
\ref{TruncatedColumnOperationG} we can illustrate below  the ``$.$''-action of $x_{ij}(\alpha)$ for $\alpha\in\F_q$ on $\hat V$ in theorem \ref{monomial}.  Recall  that $\tilde n=n+1$ for type $\fB_n$, and $\tilde n=n$ otherwise. Moreover if $1\leq i < j\leq N$, note that $\varepsilon_{ij}=-1$ for type $\fC_n$ and $j > n$ and $\varepsilon_{ij}=1$ otherwise (see \ref{description of R}).  If no ambiguity arises, we shall drop the indices and write $\varepsilon = \varepsilon_{ij}$. 
\end{Remark}

\vspace{4cm}
\begin{equation}\label{illTruncatedColumnOperationG}
\begin{picture}(200,200)


\multiput(-20,203)(0,7){11}{\line(0,-1){2}}
\put(-20,275){\line(-1,0){80}}
\put(-20,275){\line(1,0){80}}
\put(-100,275){\line(1,-1){80}}
\put(60,275){\line(-1,-1){80}}
\put(-80,275){\line(0,-1){60}}
\multiput(-85,250)(0,-8){6}{\line(4,3){10}}
\put(-40,275){\line(0,-1){60}}
\put(0,275){\line(0,-1){60}}
\put(40,275){\line(0,-1){20}}
\put(-80,255){\circle*{4}}
\put(-91,253){\makebox{$i$}}
\put(-80,280){\makebox{$i$}}
\put(-51,210){\makebox{$j$}}
\put(-40,280){\makebox{$j$}}
\put(0,280){\makebox{$\bar j$}}
\put(40,280){\makebox{$\bar i$}}
\put(40,293){\line(0,1){10}}
\put(40,303){\line(-1,0){30}}
\put(10,303){\vector(-1,-2){5}}
\put(19,308){\makebox{$\alpha$}}
\put(-40,293){\line(0,1){10}}
\put(-40,303){\line(-1,0){30}}
\put(-70,303){\vector(-1,-2){5}}
\put(-67,308){\makebox{$-\alpha$}}
\put(-40,215){\circle*{4}}
\put(-130,200){\makebox{set to zero}}
\put(-105,213){\vector(2,1){12}}
\put(-39,253){\makebox{$(i,j)$}}
\put(-40,255){\circle*{4}}
\put(-70,180){\makebox{Illustration 1): $j\leq n$}}


\multiput(200,203)(0,7){11}{\line(0,-1){2}}
\put(200,275){\line(-1,0){80}}
\put(200,275){\line(1,0){80}}
\put(120,275){\line(1,-1){80}}
\put(280,275){\line(-1,-1){80}}
\put(140,275){\line(0,-1){60}}
\multiput(135,250)(0,-8){6}{\line(4,3){10}}
\put(180,275){\line(0,-1){60}}
\put(220,275){\line(0,-1){60}}
\put(260,275){\line(0,-1){20}}
\put(181,253){\makebox{$(i,\bar j)$}}
\put(140,255){\circle*{4}}
\put(181,215){\circle*{4}}
\put(181,255){\circle*{4}}
\put(129,253){\makebox{$i$}}
\put(140,280){\makebox{$i$}}
\put(169,210){\makebox{$\bar j$}}
\put(180,280){\makebox{$\bar j$}}
\put(220,280){\makebox{$j$}}
\put(260,278){\makebox{$\bar i$}}
\put(260,291){\line(0,1){12}}
\put(260,303){\line(-1,0){70}}
\put(190,303){\vector(-1,-2){5}}
\put(233,306){\makebox{$\varepsilon\alpha$}}
\put(220,291){\line(0,1){17}}
\put(221,308){\line(-1,0){70}}
\put(150,308){\vector(-1,-2){8}}
\put(163,311){\makebox{$-\alpha$}}
\put(150,180){\makebox{Illustration 2): $\tilde n< j<\bar i$}}
\put(-20,95){\line(-1,0){80}}
\put(-20,95){\line(1,0){80}}
\put(-100,95){\line(1,-1){80}}
\put(60,95){\line(-1,-1){80}}
\put(-80,95){\line(0,-1){80}}
\multiput(-85,70)(0,-8){8}{\line(4,3){10}}
\put(-20,95){\line(0,-1){80}}
\put(40,95){\line(0,-1){20}}
\put(-80,75){\circle*{4}}
\put(-91,73){\makebox{$i$}}
\put(-80,100){\makebox{$i$}}
\put(40,100){\makebox{$\bar i$}}
\put(38,113){\line(-1,1){10}}
\put(28,123){\line(-1,0){38}}
\put(-10,123){\vector(-1,-2){7}}
\put(-33,100){\makebox{$n+1$}}
\put(10,128){\makebox{$\alpha$}}
\put(10,110){\makebox{\textcircled 2}}
\put(-24,111){\line(-1,1){12}}
\put(-36,123){\line(-1,0){34}}
\put(-70,123){\vector(-1,-2){7}}
\put(-65,128){\makebox{$-\alpha$}}
\put(-58,110){\makebox{\textcircled 1}}
\put(38,113){\line(-1,3){10}}
\put(29,143){\line(-1,0){97}}
\put(-68,143){\vector(-1,-3){11}}
\put(-32,148){\makebox{$\frac{1}{2}\alpha^2$}}
\put(-30,130){\makebox{\textcircled 3}}
\put(-120,0){\makebox{Illustration 3): $j= n+1$ for type $\fB_n$}}
\multiput(200,3)(0,7){11}{\line(0,-1){2}}
\put(200,95){\line(-1,0){80}}
\put(200,95){\line(1,0){80}}
\put(120,95){\line(1,-1){80}}
\put(280,95){\line(-1,-1){80}}
\put(140,95){\line(0,-1){20}}
\put(135,71){\line(4,3){10}}


\put(260,95){\line(0,-1){20}}
\put(140,75){\circle*{4}}
\put(129,73){\makebox{$i$}}
\put(140,100){\makebox{$i$}}
\put(260,98){\makebox{$\bar i$}}
\put(260,75){\circle*{4}}
\put(275,58){\makebox{$(i,\bar i) \in \pUP$}}
\put(275,62){\vector(-1,1){10}}
\put(260,111){\line(-1,1){12}}
\put(248,123){\line(-1,0){93}}
\put(155,123){\vector(-1,-1){12}}
\put(183,131){\makebox{$-\alpha$}}
\put(117,0){\makebox{Illustration 4): $ j = \bar i$ for type $\fC_n$}}
\end{picture}
\end{equation}


All the illustrations above  are self-explaining, in view of lemma \ref{rootsubgroupsU} and proposition \ref{TruncatedColumnOperationG}, except maybe no. 3). Here we need to stick to an order as given by 
\[
x_{i, n+1}(\alpha)=\tilde x_{i,n+1}(\alpha)\tilde x_{n+1,\, \bar i}(-\alpha)\tilde x_{i \bar i} (-\frac{1}{2}\alpha^2),
\]
labeled by \textcircled{1}, \textcircled{2} and \textcircled{3} in the illustration.

\begin{Remark}\label{actbyseq}
For $A\in V$ let $
\cO_A=\{[A].u\,|\,u\in U\}
$
be the {\bf $U$-orbit}
 of $[A]\in \hat V$ under the ``.''-action of $U$.
Note that in view of  \ref{rootsubgroupsU},  every $[B]\in \cO_{_A}$ ($A\in V$) can be obtained from $[A]$ by a sequence of restricted column operation in \ref{4action}.\hfill$\square$
\end{Remark}

\section{Staircase orbits}

 To determine the precise decomposition of $\C \hat V\cong \C U_{\C U}$ into orbit modules $\C \cO_{_A}$ we would need to classify the orbits $\cO_{_A}$, $A\in V$, that is e.g. by  finding a special set $\cR \subseteq \hat V$ of characters such that each orbit on $\hat V$ contains precisely one $[A]\in \cR$. To find such a collection $\cR$ will be subject to a future investigation. Instead we shall first exhibit a special subset  of all $U$-orbits on $\hat V$, called {staircase orbits}, and prove that each orbit module is isomorphic to a staircase orbit module.  In a second step we shall classify then the staircase orbits by so called core characters by proving, that each staircase orbit  contains precisely one core character. We first define staircase orbits.
 
  \begin{Notation}We set $\pUP=\tril\quad\dot{\cup}\quad \trir$,
where 
 \[
 \tril=\{(i,j)\in \pUP\,|\,j\leqslant \tilde n\}\quad \text{ and } \quad
 \trir=\{(i,j)\in \pUP\,|\,j>\tilde n\}
 \]
Note that  $\CC\subseteq \trir$ for type $\fC_n$. 
 \hfill $\square$
 \end{Notation}

\begin{Defn}
Suppose $A\in V$. We call $(i,j) \in \pUP$ a {\bfseries main condition} of $A$ (or of $[A]$) if $A_{ij}$ is the rightmost non-zero entry in the $i$-th row. We call a main condition $(i,j)$ {\bfseries left main condition} if $(i,j) \in \tril$,  and {\bfseries right main condition} if $(i,j)\in \trir$. Let
\begin{eqnarray*} 
\mc(A) & =& \big\{ (i,j) \in \pUP \; \big| \; \text{$(i,j)$ is a main condition of $A$} \big\}, \\
\lmc(A) & =& \big\{ (i,j) \in \pUP \; \big| \; \text{$(i,j)$ is a left main condition of $A$} \big\} \subseteq \tril, \\
\rmc(A) &= &\big\{ (i,j) \in \pUP \; \big| \; \text{$(i,j)$ is a right main condition of $A$}\big\}\subseteq \trir .
\end{eqnarray*} 
Note that we have $\mc(A)=\lmc(A)\;\dot\cup\; \rmc(A)$, and that  in view of remark \ref{actbyseq} $\mc(A)=\mc(B)$ for all $[B]\in \cO_{_A}$.\hfill$\square$
\end{Defn}
 
\begin{Defn}
Let $[A]\in \hat V$.  We call $[A]$ a {\bfseries staircase character},  if the elements of $\mc(A)$  lie in different columns and adopt a similar notation for $U$- and $\widetilde U$-orbits $\cal{O}$, and for $U$- and $\widetilde U$-orbit modules $M$.
\end{Defn}

To show that every $U$-orbit module is isomorphic to a staircase one we need to investigate homomorphisms between orbit modules. 
By general theory, every $U$-linear map from any right ideal $I$ of $\C U$ into $\C U$ is obtained by left multiplication $\lambda_x: I\longrightarrow \C U: y\mapsto xy$ by some $x\in \C U$, since $\C U$ is a self-injective algebra. We may use the right $\C U$-module isomorphism $f: \C U\longrightarrow \C \hat V\cong \C^V$ and its inverse $f^{-1}=f^*$ of \ref{monomial}  to induce  an $U$-linear left action $\lambda_x$ for $x\in U$ on $\C \hat V$ through multiplication of $x$ on $\C U$. We obtain:
\begin{Lemma}\label{leftactionall}
Let $x\in U, A\in V$. Then, identifying  $\C U_{\C U}$ and $\C \hat V\cong \C^V$ by $f$ and $f^*= f^{-1}$ we have:
\begin{equation}\label{leftmult1}
\lambda_x [A]=\sum\nolimits_{u\in U}{ \theta\circ \kappa(-x^{-t}A, u)}\pi(u).
\end{equation}
\end{Lemma}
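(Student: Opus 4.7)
The plan is to chase the module isomorphism $\C \hat V \cong \C U_{\C U}$ from Theorem \ref{monomial} through left multiplication by $x$, and then apply the adjointness of the trace form $\kappa$ from Lemma \ref{traceform}. The argument is essentially pure bookkeeping; there is no genuine obstacle.

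First, combining $f^*$ from Theorem \ref{monomial} with the convention $[A] = \chi_{-A}$ of Notation \ref{lide}, the character $[A]$ is represented inside $\C U$ as $\sum_{u \in U} \theta \circ \kappa(-A, u)\, u$; here I use that $\kappa(-A, u) = \kappa(-A, \pi(u))$ since $A$ is supported in $\pUP$. Left-multiplying by $x$ and reindexing the sum by $v = xu$ (a bijection of $U$ with itself) transforms this into $\sum_{v \in U} \theta(\kappa(-A, x^{-1} v))\, v$ in $\C U$. Next I would invoke the identity $\kappa(B, CD) = \kappa(C^t B, D)$ from Lemma \ref{traceform} with $B = -A$, $C = x^{-1}$ and $D = v$ to rewrite the exponent as $\kappa(-x^{-t} A, v)$, yielding $\sum_{v \in U} \theta(\kappa(-x^{-t} A, v))\, v \in \C U$.

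Finally I would translate back to $\C \hat V \cong \C V$ by applying $f$: on basis elements this sends $v \in U$ to $f(v) = \pi(v) \in V$. The image is $\sum_{v \in U} \theta \circ \kappa(-x^{-t} A, v)\, \pi(v)$, which is precisely \ref{leftmult1}. The only point that requires attention is keeping the two identifications straight (between $\C U$ and $\C^V$ via $f^*$, and the canonical one between $\C V$ and $\C^V$), so that the transpose $x \mapsto x^{-t}$ emerges naturally from the adjointness of $\kappa$ rather than being conflated with group inversion in $U$.
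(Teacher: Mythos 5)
Your proposal is correct and follows essentially the same route as the paper's proof: represent $[A]=\chi_{-A}$ in $\C U$ via $f^*$, left-multiply by $x$, reindex the sum, apply the adjointness identity for $\kappa$ from Lemma \ref{traceform} to move $x^{-1}$ across as $x^{-t}$, and transport back with $f$. No gaps.
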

\begin{proof}

Using $[A] = \chi_{-A}$ and \ref{monomial} we have:
\begin{eqnarray*}
\lambda_x f^*([A])&=&\lambda_x \sum_{u\in U}\theta\circ\kappa(-A,u)u
= \sum_{u\in U}{ \theta\circ \kappa(-A, u)}x u\\
&=& \sum_{u\in U}{ \theta\circ \kappa(-A, x^{-1}u)}u=\sum_{u\in U}{ \theta\circ \kappa(-x^{-t}A, u)}u,
\end{eqnarray*}
and the claim follows by applying $f$ on both sides of the formula above.
\end{proof}

\begin{Remark}\label{LeftPi}
Note that in the bilinear form of  \ref{leftmult1} we cannot replace  $u$ by $\pi(u)$, since the intersection of the support of $-x^{-t}A$ and of $u$ is not contained in $\pUP$ in general. In fact, the projection $\pi = \pi_{\pUPs}$ is not a left $1$-cocycle on $\widetilde U$, not even on $U$ (but on $U_{\pUPs}$, as we shall see below). The left action by $\lambda_x$ does not take $[A]$ to a multiple of $x.[A]:=[\pi(x^{-t}A)]$ in general, but into a linear combination of many characters $[C]\in \hat V$. \hfill$\square$
\end{Remark}   

\begin{Notation}
Let $\KL=\{(i,j)\in \Square\,|\, 1\leqslant j <\bar i \}$. Thus $\KL$ consists of all positions in $\Square$  above the anti-diagonal.  Set $\pKL=\KL$ for types $\fB_n$ and $\fD_n$,  and $\pKL=\KL\cup \CC$ for type $\fC_n$. \hfill$\square$
\end{Notation}

\begin{Theorem}\label{leftactionSupp}
Let $x\in U, A\in V$ such that $\supp(x^{-t}A)\subseteq \pKL$. Then $\supp(x^{-t}B)\subseteq \pKL$  for all  $[B]\in \cO_A$, and 
$
\lambda_x [B]=\theta\circ\kappa(-B,x^{-1})[\pi(x^{-t}B)] = \chi_{_{-B}}(\pi(x^{-1}))[\pi(x^{-t}B)]$.
\end{Theorem}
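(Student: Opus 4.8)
The statement has two halves: the support assertion $\supp(x^{-t}B)\subseteq\pKL$ for all $[B]\in\cO_A$, and the monomial formula for $\lambda_x[B]$. I would first deduce the formula from the support assertion together with Lemma~\ref{leftactionall}, via a short trace computation; then I would prove the support assertion separately, reducing along the orbit to a single restricted column operation and arguing directly on matrix entries. The hard part is entirely the support assertion.

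\textbf{The formula, granting the support assertion.} By Lemma~\ref{leftactionall}, $\lambda_x[B]=\sum_{u\in U}\theta\circ\kappa(-x^{-t}B,u)\,\pi_{\pUPt}(u)$; since $f|_U$ is bijective (Theorem~\ref{monomial}) this equals $\sum_{v\in V}\theta\circ\kappa(-x^{-t}B,u_v)\,v$, where $u_v\in U$ is the unique element with $\pi_{\pUPt}(u_v)=v$. Write $x^{-t}B=C+E$ with $C=\pi_{\pUPt}(x^{-t}B)\in V$ and $E=(1-\pi_{\pUPt})(x^{-t}B)$. Granting $\supp(x^{-t}B)\subseteq\pKL$, we get $\supp E\subseteq\pKL\setminus\pUP$, and every position in $\pKL\setminus\pUP$ lies weakly below the main diagonal; since each $u_v$ is unitriangular, $\supp E\cap\supp u_v\subseteq\diag$, so $\kappa(E,u_v)=\sum_i E_{ii}=\operatorname{tr}(E)$, independent of $v$. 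Moreover $\operatorname{tr}(E)=\operatorname{tr}(x^{-t}B)$ (as $C$ has zero diagonal) $=\kappa(B,x^{-1})$ by cyclic invariance of the trace and the definition of $\kappa$; and $\kappa(C,u_v)=\kappa(C,v)$ because $C\in V$ and $v=\pi_{\pUPt}(u_v)$. Hence $\kappa(x^{-t}B,u_v)=\kappa(C,v)+\kappa(B,x^{-1})$, and using Notation~\ref{lide},
\[
\lambda_x[B]=\theta\circ\kappa(-B,x^{-1})\sum_{v\in V}\theta\circ\kappa(-C,v)\,v=\theta\circ\kappa(-B,x^{-1})\,[C]=\chi_{_{-B}}\big(\pi_{\pUPt}(x^{-1})\big)\,[\pi_{\pUPt}(x^{-t}B)],
\]
since $\kappa(-B,x^{-1})=\kappa(-B,\pi_{\pUPt}(x^{-1}))$ as $B\in V$.

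\textbf{The support assertion.} As $U$ is generated by the root subgroups $X_{ij}$, $(i,j)\in\pUP$ (Definition~\ref{def of generator}, Theorem~\ref{anyorder}), any $[B]\in\cO_A$ is joined to $[A]$ by a chain $[A]=[B_0],[B_1],\dots,[B_m]=[B]$ in $\cO_A$ with $[B_r]=[B_{r-1}]\cdot x_{i_rj_r}(\alpha_r)$; so by induction it suffices to prove: if $\supp(x^{-t}B)\subseteq\pKL$ and $[B']=[B]\cdot x_{ij}(\alpha)$ with $(i,j)\in\pUP$, then $\supp(x^{-t}B')\subseteq\pKL$. By Lemma~\ref{rootsubgroupsU} and Proposition~\ref{TruncatedColumnOperationG}, $B'=\pi_{\pUPt}\big(B\,x_{ij}(-\alpha)^t\big)$ is obtained from $B$ by altering only columns $i$ and $\bar j$ (together with column $i$ again through the $e_{i\bar i}$-term when $j=n+1$ in type $\fB_n$), each new column being the corresponding column of $B$ plus $\pm\alpha$ times another column of $B$, truncated to $\pUP$; this is precisely the ``$.$''-action pictured in \ref{4action}. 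Since $x^{-t}B'=x^{-t}B+x^{-t}(B'-B)$ and $\supp(x^{-t}B)\subseteq\pKL$, I reduce to $\supp\big(x^{-t}(B'-B)\big)\subseteq\pKL$. Here $B'-B$ lives in one or two columns and is, up to truncation, a linear combination of columns of $B$; using that $x\in U$ forces $(x^{-t})_{ab}=\varepsilon_{ba}x_{\bar a\bar b}$ (Lemma~\ref{description of R}), I compare the relevant partial column sums of $x^{-t}(B'-B)$ with those of $x^{-t}B$, and the hypothesis $\supp(x^{-t}B)\subseteq\pKL$ (i.e.\ the vanishing of the partial column sums of $x^{-t}B$ outside $\pKL$) forces the new ones to vanish outside $\pKL$ as well. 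The cases are organised exactly by the four illustrations of \ref{4action} (position of $j$ relative to $n+1$, and the Dynkin type), with the type-$\fC_n$ sign $\varepsilon$ and the type-$\fB_n$ quadratic term at $j=n+1$ each producing one extra subcase.

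\textbf{Main obstacle.} The crux is this last matrix-entry comparison. Left multiplication by $x^{-t}$ mixes every row with all rows below it, so the naive support estimate for $x^{-t}(B'-B)$ spills out of $\pKL$; the content of the claim is that the hypothesis $\supp(x^{-t}B)\subseteq\pKL$ produces exactly the cancellations needed to stay inside $\pKL$. Obtaining these cancellations uniformly over all altered columns, over all four cases of \ref{4action}, and in particular at the anti-diagonal positions $\CC$, is the technical heart of the proof; the rest is bookkeeping.
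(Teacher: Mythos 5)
Your derivation of the formula $\lambda_x[B]=\theta\circ\kappa(-B,x^{-1})[\pi(x^{-t}B)]$ from Lemma~\ref{leftactionall}, granting the support assertion, is correct and essentially the paper's: both rest on the observation that $\pKL\setminus\pUP$ lies weakly below the diagonal, so that $\kappa(-x^{-t}B,u_v)$ splits into a $\pUP$-part $\kappa(-C,v)$ plus a $v$-independent trace term $\kappa(-B,x^{-1})$. You decompose $x^{-t}B=C+E$ on one side of $\kappa$; the paper writes $u_v=(u_v-1)+1$ on the other side; the resulting computations coincide.

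The support assertion is where you diverge from the paper, and where the proposal has a real gap. You reduce along the orbit to a single restricted column operation $[B']=[B].x_{ij}(\alpha)$ and assert that a comparison of partial column sums, using $(x^{-t})_{ab}=\varepsilon_{ba}x_{\bar a\bar b}$, forces the new sums outside $\pKL$ to vanish, but you explicitly leave this, the technical heart, unproved. Moreover the reduction is more delicate than your outline suggests: $B'=\pi\bigl(Bx_{ij}(-\alpha)^t\bigr)$, so $B'-B$ is the \emph{truncation} of a column combination of $B$, not the combination itself, and $x^{-t}B'$ does not cleanly split as $x^{-t}B$ plus $x^{-t}$ of a column combination; what survives the projection $\pi$ is precisely the thing that has to be analysed. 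The paper proceeds differently and more directly: since $x^{-t}$ is lower unitriangular, $x^{-t}B$ arises from $B$ by adding multiples of rows to lower rows, and the decisive input --- which your proposal never invokes --- is that every $[B]\in\cO_A$ has the \emph{same} main conditions, at the same positions with the same values, as $A$ (and in each row everything to the right of the main condition vanishes). It is this invariance of $\main$ along the orbit that lets one transport $\supp(x^{-t}A)\subseteq\pKL$ to $\supp(x^{-t}B)\subseteq\pKL$ directly, without any column-by-column induction. Without that ingredient I do not see how to close your column-sum comparison; I would recast the support argument in terms of the rows of $B$ and the fixed data $\main(A)=\main(B)$, as the paper does.
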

\begin{proof} Let $[B] \in \cO_A$. It is easy to see, that $x^{-t}B$ arises from $B$ by  adding scalar multiples of rows to lower rows. Now $[B] \in \cO_A$ implies, that, the most right hand sided non zero entries in rows of $B$  are on the main conditions with $B_{ij}=A_{ij}$ for all $(i,j)\in \main(A)=\main(B)$.  Hence  if $\supp(x^{-t}A)\subseteq \pKL$, then $\supp(x^{-t}B)\subseteq \pKL$ for all $[B] \in \cO_A$. By \ref{leftactionall}, we have:
\begin{eqnarray}
\lambda_x [B]&=&\sum\nolimits_{u\in U}{ \theta\circ \kappa(-x^{-t}B, u)}\pi(u)=\sum\nolimits_{u\in U}{ \theta\circ \kappa(-x^{-t}B, u-1+1)}\pi(u)
\nonumber\\&=&\theta\circ\kappa(-x^{-t}B,1)\cdot\sum\nolimits_{u\in U}{ \theta\circ \kappa(-x^{-t}B, u-1)\pi(u)}
\nonumber\\&=&\theta\circ\kappa(-B,x^{-1})\cdot\sum\nolimits_{u\in U}{ \theta\circ \kappa(-x^{-t}B, u-1)\pi(u)}
\nonumber\\&=&\chi_{_{-B}}(\pi(x^{-1}))\cdot\sum\nolimits_{u\in U}{ \theta\circ \kappa\big(-\pi(x^{-t}B), \pi(u)\big)\pi(u)} \label{leftactionsupp1}
\end{eqnarray}
since $\supp(x^{-t}B)\cap\supp(u-1)\subseteq \pUP$ and $\pi(u)=\pi(u-1)$.
Then the statement holds observing the following equation:
\begin{equation*}
\sum_{u\in U}\theta\circ \kappa\big(-\pi(x^{-t}B), \pi(u)\big)\pi(u)=\sum_{\pi(u)\in V}{ \chi_{-\pi(x^{-t}B)}\big(\pi(u)\big)\pi(u)}=[\pi(x^{-t}B)].
\end{equation*}
\end{proof}

Observe that $V=V_{\pUPs}$ is invariant under left multiplication by elements of $\widetilde U$. Hence $\widetilde U$ acts on $\C^V$. Indeed it permutes $\hat V \subseteq \C^V$. More precisely $u.\tau$ with $ u \in \widetilde U, \tau\in\C^V$ acts on $V$ by $u.\tau(B) = \tau (u^{-1}B)$ for $B\in V$. For the character $[A]\in\hat V, A\in V$, evaluation of  $u.[A]$ at $B\in V$ proves immediately: 

\begin{Lemma}\label{TypeAleftaction} Let $A\in V$ and $u\in \widetilde U$. Then $u.[A] = [\pi(u^{-t}A)]$.\hfill$\square$
\end{Lemma}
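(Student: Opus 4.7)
The plan is to prove the equality $u.[A] = [\pi(u^{-t}A)]$ by evaluating both sides at an arbitrary $B \in V$ and using the trace-adjoint property of $\kappa$ together with the fact that $B$ has support in $\pUP$.

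First, I would unpack the definitions. By the definition of the action just introduced, $(u.[A])(B) = [A](u^{-1}B)$. Using the invariance of $V$ under left multiplication by $\widetilde U$ (which was recorded just before the lemma), we have $u^{-1}B \in V$, so $[A](u^{-1}B)$ is well-defined, and equals $\theta\big(\kappa(-A, u^{-1}B)\big)$ by \ref{actionofchisubA} and the identification $[A] = \chi_{-A}$ of \ref{lide}.

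Next, I would move $u^{-1}$ across $\kappa$ via the trace adjunction from \ref{traceform}. Explicitly, $\kappa(A, u^{-1}B) = \operatorname{tr}(A^t u^{-1} B) = \operatorname{tr}\big((u^{-t}A)^t B\big) = \kappa(u^{-t}A,\, B)$. Since $\supp(B) \subseteq \pUP$, the sum expression in \ref{traceform} gives
\[
\kappa(u^{-t}A, B) \;=\; \sum_{(i,j)\in \pUPt}\!(u^{-t}A)_{ij}\, B_{ij} \;=\; \kappa\big(\pi(u^{-t}A),\, B\big),
\]
the key point being that the projection $\pi = \pi_{\pUPt}$ only discards entries which are in any case killed by pairing against $B \in V$.

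Combining the two steps,
\[
(u.[A])(B) \;=\; \theta\big(\kappa(-\pi(u^{-t}A),\, B)\big) \;=\; \chi_{-\pi(u^{-t}A)}(B) \;=\; [\pi(u^{-t}A)](B).
\]
Since $B \in V$ was arbitrary and characters in $\hat V$ are determined by their values on $V$, this yields the claimed identity. There is no genuine obstacle here: the statement is essentially the formal dual of the right action computed in \ref{monomial}, and everything follows once one sees that $\pi$ is harmless against test vectors supported in $\pUP$.
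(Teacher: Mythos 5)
Your proof is correct and follows essentially the same route as the paper, which simply asserts that evaluating $u.[A]$ at an arbitrary $B\in V$ proves the claim immediately. Your computation — unwinding $(u.[A])(B)=\theta(\kappa(-A,u^{-1}B))$, moving $u^{-1}$ across $\kappa$ via the adjunction of \ref{traceform}, and observing that the projection $\pi$ is invisible when paired against $B$ supported in $\pUP$ — supplies exactly the details behind that one-line evaluation.
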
   

Let $J= \tilde{\Phi}^+\setminus \pUP$. Then we have: 

\begin{Cor}\label{RestrRowTildeU} Let $1\leq i<j\leq N$ and $\alpha\in\F_q$. Then the permutation action of $\tilde x_{ij}(\alpha)$ on $\hat V$ is given by
\begin{equation}
\tilde x_{ij}(\alpha).[A]=[\pi(\tilde x_{ji}(-\alpha)A)] = [B]
\end{equation}
 where $B$ is obtained from $A$ by a {\bf restricted row operation}  which adds $-\alpha$ times row $i$ to row $j$ and projects the resulting matrix into $V$. In particular, if $\bar i<j$ then $[A] = [B]$, that is $\widetilde X_{ij}$ and hence $\widetilde U_J$ act trivially on $\hat V$. The left permutation action of $\widetilde U = \widetilde U_{\pUPs}\widetilde U_J$ on $\hat V$ is completely determined by the action of $\widetilde U_{\pUPs}$ and is generated by restricted row operations. 
  \hfill$\square$
  \end{Cor}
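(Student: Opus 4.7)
My plan is to read everything off Lemma \ref{TypeAleftaction}, which already gives the formula $u.[A]=[\pi(u^{-t}A)]$ for arbitrary $u\in\widetilde U$. The only routine calculations left are (a) identifying the transpose-inverse of a root element and (b) tracking which new nonzero entries fall inside $\pUP$.

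First I would compute that $\tilde x_{ij}(\alpha)^{-1}=1-\alpha e_{ij}$, whose transpose is $1-\alpha e_{ji}=\tilde x_{ji}(-\alpha)$. Plugging this into Lemma \ref{TypeAleftaction} gives $\tilde x_{ij}(\alpha).[A]=[\pi(\tilde x_{ji}(-\alpha)A)]$ directly. Next I would unwind what left multiplication by $1-\alpha e_{ji}$ does: since $(e_{ji}A)_{kl}=\delta_{kj}A_{il}$, the product $(1-\alpha e_{ji})A$ agrees with $A$ outside row $j$, and in row $j$ it replaces $A_{jl}$ by $A_{jl}-\alpha A_{il}$. Composing with $\pi=\pi_{\pUPs}$ throws away all entries outside $\pUP$, which is exactly the restricted row operation described in the statement, giving the formula for $[B]$.

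For the ``in particular'' clause I would take $(i,j)$ with $\bar i<j$ and check that the row operation is invisible to $\pi$. Row $i$ of $A$ is supported in columns $k$ with $(i,k)\in\pUP$, so $k\leq\bar i<j$; hence each potentially altered position $(j,k)$ of row $j$ satisfies $k<j$, so $(j,k)\notin\pUP$ and is zeroed by $\pi$. The original row $j$ of $A$ is also already zero in $V$, because $(j,l)\in\pUP$ forces $l>j>\bar i\geq \bar j$ (with equality only allowed in type $\fC_n$, where $\CC$ is excluded by the strict inequality), contradicting $l\leq\bar j$. Thus $\pi(\tilde x_{ji}(-\alpha)A)=A$, giving $[B]=[A]$. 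For types $\fB_n,\fD_n$ the case $\bar i=j$ (i.e.\ $(i,j)\in\CC\subseteq J$) is handled by the same argument, since in these types $\pUP=\UP$ excludes $(j,\bar j)$ entirely.

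Finally, since $\widetilde U_J$ is generated by the root subgroups $\widetilde X_{ij}$ with $(i,j)\in J$ (Theorem \ref{anyorder}), the elementwise triviality lifts to $\widetilde U_J$ acting trivially on $\hat V$. The decomposition $\widetilde U=\widetilde U_{\pUPs}\widetilde U_J$ from Remark \ref{complements} then shows that the whole $\widetilde U$-action is governed by $\widetilde U_{\pUPs}$, which in turn is generated by the $\tilde x_{ij}(\alpha)$ with $(i,j)\in\pUP$, each acting by a restricted row operation. The only potential obstacle is the case bookkeeping for the three Dynkin types in step two, but the definitions of $\pUP$ in \ref{defiV} and of $J$ make this purely combinatorial and short.
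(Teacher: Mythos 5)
Your proposal is correct and is essentially the argument the paper intends: the corollary is marked as immediate from Lemma \ref{TypeAleftaction}, and you carry out exactly the expected computation ($\tilde x_{ij}(\alpha)^{-t}=\tilde x_{ji}(-\alpha)$, left multiplication as a row operation, and the support check showing that positions $(j,k)$ with $k\leq\bar i<j$ or $k\leq\bar i=j$ lie outside $\pUP$ and are killed by $\pi$). Your explicit handling of the antidiagonal case $j=\bar i$ for types $\fB_n,\fD_n$ is a worthwhile detail the paper leaves implicit.
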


Observe that the restriction of the projection map $\pi = \pi_{\pUPt}: V_{\square}\rightarrow V$ to $\widetilde U_{\pUPs}$ is given as $\pi|_{\widetilde U_{\pUPt}}: \widetilde U_{\pUPs}\rightarrow V: u\rightarrow u-1$. Obviously $g\pi (u)+\pi(g)=g(u-1)+(g-1)=gu-1=\pi(gu)$ for all $g, u\in \widetilde U_{\pUPs}$ and hence $\pi|_{\widetilde U_{\pUPt}}$ is a left 1-cocycle.

The left hand sided version of theorem \ref{monomial} yields a left monomial $\widetilde U_{\pUPs}$-action on  $\hat V$ in addition to the already established right action. Indeed $V$ is the additive group of the Lie algebra of $\widetilde U_{\pUPs}$ and $\C \hat V = \C^V$ is a monomial $\C\widetilde U_{\pUPs}$-bimodule isomorphic to the regular  $\C\widetilde U_{\pUPs}$-bimodule. This is in fact Yan's original construction [\cite{yan}]. The action of $g \in \widetilde U_{\pUPs}$ on $\hat V$ is given as

\begin{equation}\label{pUPleftAction}
g[A] = \chi_{-A}(\pi(g^{-1}))[\pi(g^{-t} A)] = \theta\circ\kappa(-A,g^{-1})[\pi(g^{-t}A)].
\end{equation}
Thus for  the permutation action underlying the  monomial left action of $\widetilde U_{\pUPs}$ on $\hat V$ is the one given in \ref{TypeAleftaction}. 

 In general, the monomial left $\widetilde U_{\pUPs}$-action on $\hat V = \hat V_{\pUPs}$ does not commute with the monomial right $U$-action, but there are special cases, where this holds.
Let $x\in U$, then we find  a uniquely determined  element of $\widetilde U_{\pUPs}$, henceforth denoted by  $\tilde x$, such that $x=\tilde xz$ for some (as well uniquely determined) $z\in\widetilde U_J$ (see \ref{complements}).  

\begin{Lemma}\label{LeftTildeUandU} Let $x\in U$ and $[A]\in \hat V$. Then $x.[A] = \tilde x.[A]$. If in addition $\supp(x^{-t}A)\subseteq \pKL$, then 
\begin{equation}\label{UleftAction}
\lambda_x[A] = \theta\circ\kappa(-A,x^{-1})[\pi(x^{-t}A)] = \tilde x[A]
\end{equation}
 \end{Lemma}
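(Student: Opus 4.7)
My plan is to decompose $x = \tilde x z$ with $\tilde x \in \widetilde U_{\pUPs}$ and $z \in \widetilde U_J$ (Remark \ref{complements}), where $J = \tilde\Phi^+ \setminus \pUP$, and handle the two assertions in sequence.

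For the first equality $x.[A] = \tilde x.[A]$, I apply Corollary \ref{RestrRowTildeU} directly: every root subgroup $\widetilde X_{ij}$ with $\bar i \leq j$ acts trivially on $\hat V$ from the left, and these generate $\widetilde U_J$. Hence $z.[A] = [A]$ and $x.[A] = \tilde x.(z.[A]) = \tilde x.[A]$. Via Lemma \ref{TypeAleftaction} this also records the identity $\pi(x^{-t}A) = \pi(\tilde x^{-t}A)$ for later use.

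Assume now that $\supp(x^{-t}A)\subseteq \pKL$. Theorem \ref{leftactionSupp} immediately supplies the first claimed equality $\lambda_x[A] = \theta\circ\kappa(-A, x^{-1})[\pi(x^{-t}A)]$. Comparing with the left monomial action formula \ref{pUPleftAction}, namely $\tilde x[A] = \theta\circ\kappa(-A,\tilde x^{-1})[\pi(\tilde x^{-t}A)]$, and substituting $\pi(x^{-t}A) = \pi(\tilde x^{-t}A)$ from the first part, the remaining identity reduces to $\kappa(A, x^{-1}) = \kappa(A, \tilde x^{-1})$. Writing $z^{-1} = 1 + M$ with $M$ supported in $J$, so that $x^{-1} = z^{-1}\tilde x^{-1} = \tilde x^{-1} + M\tilde x^{-1}$, this reduces in turn to $\kappa(A, M\tilde x^{-1}) = 0$; and since $\supp(A)\subseteq \pUP$, it suffices to verify $(M\tilde x^{-1})_{ij} = 0$ for every $(i,j)\in \pUP$.

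This last point is a short index chase, which I expect to be the only step requiring care. Expanding $(M\tilde x^{-1})_{ij} = \sum_k M_{ik}(\tilde x^{-1})_{kj}$: the nonvanishing of $M_{ik}$ forces $(i,k)\in J$, hence $k\geq \bar i$ (strictly in type $\fC_n$, where $J = \RP$); the nonvanishing of $(\tilde x^{-1})_{kj}$ is either the diagonal case $k=j$ -- which would put $(i,j)\in J$ and contradict $(i,j)\in \pUP$ -- or it forces $(k,j)\in \pUP$ and hence $j\leq \bar k$ (strictly in types $\fB_n, \fD_n$, where $\pUP = \UP$). Combining yields $j\leq \bar k \leq i$ with at least one strict inequality, so $j<i$, contradicting $i<j$ from $(i,j)\in \pUP$. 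The only genuine subtlety is the opposite placement of the antidiagonal $\CC$ -- in $\pUP$ for type $\fC_n$ versus in $J$ for types $\fB_n, \fD_n$ -- which merely shifts which of the two inequalities is strict without affecting the contradiction.
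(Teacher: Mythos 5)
Your proof is correct and follows essentially the same route as the paper's: decompose $x = \tilde x z$ via Remark \ref{complements}, deduce $x.[A] = \tilde x.[A]$ from the trivial action of $\widetilde U_J$ (Corollary \ref{RestrRowTildeU}), invoke Theorem \ref{leftactionSupp} for the first equality of \ref{UleftAction}, and then match coefficients. The only cosmetic difference is in the coefficient check: the paper uses the adjointness $\kappa(-A,z^{-1}\tilde x^{-1}) = \kappa(-z^{-t}A,\tilde x^{-1})$ from Lemma \ref{traceform} together with $\pi(z^{-t}A)=A$, whereas you keep $z^{-1}$ on the right, expand $x^{-1}=\tilde x^{-1}+M\tilde x^{-1}$, and run the index chase directly -- an equivalent computation, spelled out a bit more explicitly than the paper's rather terse ``and hence.''
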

\begin{proof} The left hand sided  equation of \ref{UleftAction} holds by \ref{leftactionSupp}. From \ref{RestrRowTildeU} we see that $x.[A] = \tilde x.(z.[A]) = \tilde x.[A]$. It remains to check the coefficient in \ref{UleftAction}. Now  \ref{RestrRowTildeU} implies in particular $\pi(z^{-t}A) = A$ and hence
$$
\theta\circ\kappa(-A,x^{-1}) = \theta\circ\kappa(-A,z^{-1}\tilde x^{-1}) = \theta\circ\kappa(-z^{-t}A,\tilde x^{-1}) = \theta\circ\kappa(-A,\tilde x^{-1})
$$ 
and the lemma follows.
\end{proof}

If $x\in U$ and $A\in V$ we write from now on $x[A]$ instead of $\lambda_x[A]$, provided $\supp(x^{-t}A)\subseteq \pKL$.

\begin{Cor}\label{lrcommute}
Let $A\in V=V_{\pUPs}$ and $g \in U$ (respectively $g\in \widetilde  U_{\pUPs}$)  such that $\supp(g^{-t}A)\subseteq  \pKL$. Then for all $u\in U$ and all $[B]\in \cO_{A}$ we have 
\[
(g[B])u=g([B]u),
\]
hence the monomial left action by $g$ on $\C \cO_{_A}$ commutes with the monomial right action by $U$. In particular the left operation of $g$ on $\cO_A$ induces an $U$-isomorphism  from $\C \cO_A$ onto $\C \cO_B$  where $[B]=g.[A]$.  \hfill$\square$
\end{Cor}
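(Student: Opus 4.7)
My plan is to treat the two choices of $g$ separately, reducing the $\widetilde U_{\pUPs}$-case to the $U$-case by means of Lemma \ref{LeftTildeUandU}.

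When $g \in U$ the commutation is essentially automatic. The map $f^*: \C \hat V \to \C U$ of Theorem \ref{monomial} is a right $\C U$-module isomorphism, and by its very definition in Lemma \ref{leftactionall} the operator $\lambda_g$ on $\C \hat V$ corresponds under $f^*$ to left multiplication by $g$ in $\C U$. Since left and right multiplications commute in any associative algebra, $(g[B])u = g([B]u)$ holds for every $[B] \in \C\hat V$ and every $u \in U$ with no support hypothesis at all. The role of the assumption $\supp(g^{-t}A) \subseteq \pKL$ is to invoke Theorem \ref{leftactionSupp}, which on the one hand propagates the support condition from $A$ to every $[B] \in \cO_A$ and on the other hand forces $\lambda_g[B]$ to be a nonzero scalar multiple of the single character $[\pi(g^{-t}B)]$. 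Setting $[C] = g.[A] = [\pi(g^{-t}A)]$, these two observations together with the commutativity imply that $\lambda_g$ sends $\C\cO_A$ into $\C\cO_C$. Since $\lambda_g[A]$ is a nonzero scalar multiple of $[C]$ and hence generates $\C\cO_C$ as a $\C U$-module, this inclusion is surjective; as $\lambda_g$ is a bijection on $\C U$, it is also injective, yielding the claimed isomorphism $\C\cO_A \xrightarrow{\sim} \C\cO_C$.

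When $g \in \widetilde U_{\pUPs}$ I plan to reduce to the previous case. The decomposition $\widetilde U = U\widetilde U_J$ of Remark \ref{complements} supplies a unique $x \in U$ with $\tilde x = g$, namely $x = gz$ for the unique $z \in \widetilde U_J$ making the product lie in $U$. The key technical step is to verify that $\supp(g^{-t}A) \subseteq \pKL$ forces $\supp(x^{-t}A) \subseteq \pKL$: since $x^{-t} = g^{-t}z^{-t}$ and the nonzero off-diagonal entries of $z^{-t}$ sit at transposes of positions in $J \subseteq \RPC$, which lie on or strictly below the anti-diagonal, the additional row modifications they introduce cannot push any entry outside the region $\pKL$. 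Once that is in place, Lemma \ref{LeftTildeUandU} (together with Theorem \ref{leftactionSupp} to extend the support condition from $A$ to every $[B] \in \cO_A$) gives $g[B] = \tilde x[B] = \lambda_x[B]$, so the desired commutation follows from the first case applied to $x$. I expect this support-tracking step to be the main technical hurdle; everything else rests on Theorem \ref{leftactionSupp}, equation \eqref{pUPleftAction}, and the triviality that left and right multiplications in $\C U$ commute, while the final isomorphism statement of the corollary is an immediate consequence of the analysis done in the first case.
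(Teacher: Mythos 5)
Your treatment of the case $g\in U$ is correct and is essentially the paper's intended argument: $\lambda_g$ is left multiplication transported through $f^*$, so it commutes with the right $U$-action for free, and Theorem \ref{leftactionSupp} both propagates the support condition along $\cO_A$ and forces $\lambda_g[B]$ to be a nonzero multiple of the single character $[\pi(g^{-t}B)]$, whence the isomorphism onto $\C\cO_{g.[A]}$. The gap is in your reduction of the case $g\in\widetilde U_{\pUPs}$: the implication ``$\supp(g^{-t}A)\subseteq\pKL$ forces $\supp(x^{-t}A)\subseteq\pKL$'' is false, and your justification of it is exactly where the error sits. If $(z^{-t})_{ji}\neq 0$ with $(i,j)\in J$, then $\bar j\leq i<j$, while row $i$ of $A$ is supported in columns $l$ with $i<l\leq\bar i$; the new entries of $z^{-t}A$ therefore appear at positions $(j,l)$ with $l>i\geq\bar j$, i.e.\ strictly \emph{below} the antidiagonal, hence outside $\pKL$. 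Concretely, in type $\fD_3$ ($N=6$) take $A=e_{23}$ and $g=\tilde x_{15}(\alpha)$ with $\alpha\neq0$: then $g^{-t}A=e_{23}$ has support in $\KL$, but $x=x_{15}(\alpha)=\tilde x_{15}(\alpha)\tilde x_{26}(-\alpha)$ gives $x^{-t}A=e_{23}+\alpha e_{63}$, and $(6,3)\notin\KL$. So you cannot literally invoke Theorem \ref{leftactionSupp} or Lemma \ref{LeftTildeUandU} for this $x$.

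The strategy is salvageable, because the discrepancy $E=x^{-t}B-g^{-t}B$ is always supported strictly below the main diagonal: in the computation above one has $l\leq\bar i\leq j$ and in fact $l<j$, and multiplying a strictly lower triangular matrix by the lower unitriangular $g^{-t}$ keeps it strictly lower triangular. Hence $\supp(E)\cap\UR=\emptyset$, so $\pi(x^{-t}B)=\pi(g^{-t}B)$, the diagonals of $x^{-t}B$ and $g^{-t}B$ agree, and $\kappa(x^{-t}B,u-1)=\kappa(g^{-t}B,u-1)$ for all $u\in U$. These are the only facts the proofs of \ref{leftactionSupp} and \ref{LeftTildeUandU} actually extract from their hypothesis; that hypothesis can be weakened to $\supp(x^{-t}B)\cap\UR\subseteq\pUP$, and this weaker condition \emph{does} follow from $\supp(g^{-t}B)\subseteq\pKL$. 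You need to insert this observation (or rerun those two proofs under the weakened hypothesis) before your reduction to the first case goes through; as written, the key technical step is not merely unproved but false.
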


 \begin{Theorem}\label{isotostair}
 Let $A\in V$. Then there exists $B\in V$ such that $[B] \in \hat V$ is staircase and $\C \cO_A\cong \C \cO_B$ as $\C U$-module.
 \end{Theorem}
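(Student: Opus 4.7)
The plan is to produce $g\in \widetilde U_{\pUPs}$ with $g.[A]$ staircase and $\supp(g^{-t}A)\subseteq \pKL$; Corollary \ref{lrcommute} then delivers the required $\C U$-module isomorphism $\C\cO_A\cong \C\cO_{g.[A]}$.

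I would construct $g$ via a Gaussian-style row reduction, processing rows $i=1,2,\ldots$ from top to bottom. At each stage let $(i,j)$ be the current main condition of row $i$. If some previously processed row $i'<i$ has its finalized main condition in the same column $j$, apply $\tilde x_{i'i}(\alpha)\in \widetilde U_{\pUPs}$ for the appropriate $\alpha\in \F_q$; this is legitimate because $(i',i)\in \pUPs$ whenever both rows carry $\pUPs$-entries, and the left dot action adds $-\alpha$ times row $i'$ to row $i$ and projects back to $V$. The operation kills the $(i,j)$-entry and strictly decreases the main condition column of row $i$, so the inner loop terminates, leaving row $i$ with a main condition in a column not used by any previously processed row. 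Iterating row-by-row produces $g$ as the right-to-left product of these factors, and $[B]:=g.[A]$ has all main conditions in pairwise distinct columns, i.e., is a staircase character.

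The delicate point is verifying $\supp(g^{-t}A)\subseteq \pKL$. I would prove by induction on $i$ that after row $i$ is finalized, the entire row $i$ of $g^{-t}A$ has support in columns $\leq j_i$, where $j_i$ is the finalized main column; since $(i,j_i)\in \pUPs$ forces $j_i<\bar i$ for types $\fB_n,\fD_n$ and $j_i\leq \bar i$ for type $\fC_n$, this places the row inside $\pKL$. The inductive step uses two facts: by inductive hypothesis, any earlier row $k<i$ used in the elimination has its un-projected row $k$ already supported in columns $\leq j_k$; and the algorithm only invokes $\tilde x_{ki}(\alpha)$ when $j_k$ equals the then-current main column of row $i$, so $j_k\leq \bar i$. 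Combined with the observation that non-$\pUPs$ entries of row $k$ can only arise at columns $\leq k<j_k$, this guarantees every contribution to row $i$ stays inside $\pKL$. Rows outside the processed range are zero in $A$ and are untouched by $g^{-t}$.

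The principal obstacle is the bookkeeping between the projected character (on which the algorithm operates step-by-step) and the un-projected matrix $g^{-t}A$ (on which Corollary \ref{lrcommute} imposes its support condition). The crucial design choice that resolves it is to trigger each elementary factor $\tilde x_{ki}(\alpha)$ only when $j_k$ matches the current main column of row $i$: this simultaneously justifies the step algorithmically and yields the bound $j_k\leq \bar i$ needed to keep the stray below-$\pUPs$ entries inside the $\pKL$-region of row $i$. Once the support condition is verified, Corollary \ref{lrcommute} immediately yields the desired $\C U$-module isomorphism $\C\cO_A\cong \C\cO_B$.
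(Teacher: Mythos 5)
Your proposal is correct and follows essentially the same route as the paper: colliding main conditions are removed by restricted row operations $\tilde x_{i'i}(\alpha)\in\widetilde U_{\pUPs}$ acting on the left, the support condition $\supp(\,\cdot\,)\subseteq\pKL$ is verified, and Corollary \ref{lrcommute} supplies the $\C U$-isomorphism. The paper merely organizes the elimination column by column from right to left and invokes \ref{lrcommute} once per elementary factor, so the $\pKL$-condition only ever needs to be checked for a single $\tilde x_{ji}(-\alpha)A$ (where it is immediate from the position of the upper row's main condition), which sidesteps the composite-product bookkeeping you flag as the delicate point.
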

 \begin{proof}
 Suppose $\mc(A)$ contains two positions $(i,k), (j,k)$ in column $k$ with $1\leqslant k\leqslant N, 1\leqslant i< j \leqslant n$. By definition of $\mc(A)$,  we have $A_{il}=0$ for $l> k$ and hence  $\supp(\tilde x_{ji}(-\alpha)A)\subseteq \pKL$.

Then $\tilde x_{ij}(\beta).[A]=[B]$ for  $\beta = A_{jk}/A_{ik}\in\F_q$, where
 the main conditions of $A$ and $B$ coincide in all rows except the $j$-th one; Here the main condition $(j,k)$ of $A$ is deleted and possibly replaced  by a new main condition $(j,l)$ to the left of $(j,k)$, with $j< l< k$.
By corollary \ref{lrcommute}, $\C \cO_A$ and $\C \cO_B$ are isomorphic as $U$-modules. Since new main conditions in this procedure appear only to the left of the deleted ones, one may repeat the process working through the columns from right to left to produce finally a (not necessarily unique) staircase character $[C]\in \hat V$ such that $\C \cO_A\cong \C\cO_C$.
 \end{proof}
 
 \section{Core characters}
Our theorem \ref{isotostair} tells us that for finding the isomorphism classes of orbit module $\C \cO_A$, $A\in V$, it suffices to classify the staircase orbits. This will be done in the next two sections. Our strategy consists of using restricted column operations to annihilate as many nonzero entries as possible. So we strip $[A]$  all the way down to the assembly of positions with possibly non vanishing entries, called core of $A$, which we will define now:

\begin{Defn}\label{coredef} Let $[A]\in \hat V$ be a staircase character. 
\begin{itemize}
\item [1)]  The positions $(i, \bar j)$ with $(i,j)\in \rmc(A)$ are called {\bf minor conditions} of $A$ (or $[A]$, or $\cO_A$) and we denote the set of minor conditions of $[A]$ by $\minc(A)$ (or $ \minc(\cO_A)$). 
\item [2)] A position $(i,j)\in \tril$ is  called {\bf supplementary condition} for $A$ or $\cO_{_A}$,  if $(i,j)$ is on the left of some minor condition or some left main condition of $A$, in the same column as some minor condition of $A$ and is not itself a minor or main condition.  The set of supplementary conditions is denoted by $\suppl (A)$. Note that $\suppl(A)\subseteq \tril\setminus\{\text{column $\tilde n$}\}$.
\item[3)] The {\bf core} of $A$ or $\cO_A$ is defined to be 
$\core( A)=\main(A)\cup \minc(A)\cup \suppl(A)$ if  $U$ is of type $\fB_n$ or $\fD_n$ and $\core(A) =
\main(A)\cup \suppl(A)$ if  $U$ is of type $\fC_n$. Note that $\core( A)$ is determined by  $\main(A)$ and $\core(B) = \core(A)$ for all $B\in\cO_A$.
\item [4)] We define the {\bf verge} of $A$ to be $\verge(A)=\sum_{(i,j)\in \main(A)} A_{ij}e_{ij}$.
Note that $\supp\big(\verge(A)\big)=\main(A)$. The linear character $[A]\in \hat V$ is called {\bf verge character}, if $A=\verge(A)$.
\item [5)] A linear character $[A]\in \hat V$ is called {\bf core character}, if $\supp(A)\subseteq \core (A)$.
\end{itemize}
Note, that for type $\fC_n$ minor condition associated with antidiagonal main conditions are located on the diagonal and hence are not contained in $\pUP$. \hfill$\square$
\end{Defn}

 We shall see, that every staircase orbit contains precisely one core character.
 
\begin{Defn}\label{lowerhook}
For $(i,j)\in \UP$ we define the {\bf arm} at $(i,j)$ to be $\cA(i,j)=\{(\bar j, a)\in \pUP\}$, and the {\bf leg} at $(i,j)$  to be $\cL(i,j)=\{(a,j)\in \pUP\,|\,a>i\}$. If $U$ is of type $\fC_n$ we define in addition the {\bf arm} at $(i,\bar i)$ to be $\cA(i,\bar i) = \{(i, a)\in\UP\} =  \{(i,a)\,|\,i< a < \bar i\}$ for $1\leq i\leq n$. (For an illustration for these objects see \ref{corrpic} part i) below).

Moreover we define for any $(a,b) \in \mc(A)$ the the {\bf reduced leg}  $\cL(a,b)^\circ$ to be obtained by removing from $\cL(a,b)$ all positions which are contained in an arm of $A$ attached to a right main condition. Then we define
\[
\cL = \bigcup\nolimits_{(i,j)\in \mc(A)} \cL(i,j),\quad\cL^\circ = \bigcup\nolimits_{(i,j)\in \mc(A)} \cL(i,j)^\circ\quad\text{and}\quad \cA = \bigcup\nolimits_{(i,j)\in \mc(A)}\cA(i,j),
\]
then $\Limb(A) = \cL\cup\cA = \cL^\circ\dot\cup\cA$, the second union being disjoint. \hfill$\square$  
\end{Defn}

\begin{Defn}\label{defofplace}
Let $[A]\in \hat V$ be a staircase character, and let $(i,j)\in \pUP$. Then $(i,j)$ is called a {\bf place} of $A$ if $(i,j)\notin \core(A)$ and is to the left of a main condition. Thus $(i,j)$ is either
\begin{enumerate}
\item [\romannumeral1)] to the left of  and in the same row of a left main condition and not a supplementary condition or
\item [\romannumeral2)] to the left of and in the same row of a minor condition and not a supplementary condition or
\item [\romannumeral3)] between and in the same row of a minor and a main condition or
\item [\romannumeral4)] in the case  of type $\fC_n$,  a minor condition in $\pUP$. 
\end{enumerate}
 Note that the set of places of $A$ is determined by $\main(A)$ uniquely. It is hence denoted by $\Pl(A)$, or $\Pl(\cO_A)$, or
  $\Pl(\main(A))$. Note too that $\Pl(A)\subseteq \pUP.$\hfill$\square$
\end{Defn}

\begin{Remark}\label{threepositions} 
Let $[A]\in \hat V$ be a  staircase character.  Then the only nonzero entries of  any $[B]\in\cO_A$ are at positions $(i,j)\in\core(A)\cup \Pl(A)$, (disjoint union by definition of $\Pl(A)$).\hfill$\square$
\end{Remark}

Here is an illustration of the four types of places.  By $M$ we denote main, by $m$ minor and by $s$ supplementary conditions. The thick lines indicate the places attached to $M$ and $m$ respectively. The star dotted lines indicate the positions of $\Limb(A)$ connected to the places by the bijection explained below in \ref{placelimb} and dotted lines additional legs and arms.



\begin{equation}\label{corrpic}
\begin{picture}(200,300)

\put(-130,270){\makebox{$i)$}}

\put(90,275){\line(-1,0){140}}
\put(90,275){\line(1,0){140}}
\put(-50,275){\line(1,-1){140}}
\put(230,275){\line(-1,-1){140}}

\put(70,190){\makebox{\circle*{8}}}
\put(-40,251){\makebox{$i$}}
\put(0,210){\makebox{$k$}}
\put(25,185){\makebox{$\bar l$}}
\put(143,280){\makebox{$l$}}
\put(60,151){\makebox{$j$}}
\put(-30,255){\circle*{3}}
\put(11,214){\circle*{3}}
\put(36,189){\circle*{3}}
\put(70,154){\circle*{3}}
\put(143,189){\circle*{3}}
\put(143,275){\circle*{3}} 
\put(65,252){\makebox{$M$}}
\put(78,252){\makebox{$0\cdots\cdots$}}
\put(29,210){\makebox{$m$}}
\put(135,210){\makebox{$M$}}

\put(147,210){\makebox{$0\cdots$}}
\put(29,252){\makebox{$s$}}
\linethickness{2pt}
\put(-26,255){\line(1,0){54}}
\put(37,255){\line(1,0){26}}
\dottedline[$\star$]{6}(70,245)(70,160)

\linethickness{2pt}
\dottedline{4}(43,190)(136,190)
\dottedline{4}(143,208)(143,194)

\linethickness{.5pt}
\put(140,161){\makebox{$\cA(k,l)$}}
\put(143,171){\vector(-1,1){15}}

\put(130,141){\makebox{$\cL(i,j)$}}
\put(140,151){\vector(-3,1){60}}

\put(165,195){\makebox{$\cL(k,l)$}}
\put(163,200){\vector(-1,0){15}}




\put(-130,100){\makebox{$ii)$}}
\put(-20,105){\line(-1,0){80}}
\put(-20,105){\line(1,0){80}}
\put(-100,105){\line(1,-1){80}}
\put(60,105){\line(-1,-1){80}}
\put(-80,85){\circle*{3}} 
\put(-91,82){\makebox{$k$}}
\put(-47,38){\makebox{$\bar l$}}

\put(-3,112){\makebox{$l$}}

\put(-49,82){\makebox{$m$}}
\put(-7,82){\makebox{$M$}}
\put(5,82){\makebox{$0\cdots$}}
\put(-38,43){\circle*{3}}  
\put(-2,43){\circle*{3}}  
\put(-2,105){\circle*{3}}   

\linethickness{2pt}
\dottedline[$\star$]{6}(-2,79)(-2,48)
\put(-50,85){\line(-1,0){27}}


\put(90,100){\makebox{$iii)$}}
\linethickness{.5pt}
\put(200,105){\line(-1,0){80}}
\put(200,105){\line(1,0){80}}
\put(120,105){\line(1,-1){80}}
\put(280,105){\line(-1,-1){80}}
\put(140,85){\circle*{3}} 
\put(129,82){\makebox{$k$}}
\put(173,38){\makebox{$\bar l$}}

\put(217,112){\makebox{$l$}}

\put(171,82){\makebox{$m$}}
\put(213,82){\makebox{$M$}}
\put(225,82){\makebox{$0\cdots$}}
\put(182,43){\circle*{3}}  
\put(218,43){\circle*{3}}  
\put(218,105){\circle*{3}}   

\linethickness{2pt}
\dottedline[$\star$]{6}(188,43)(212,43)
\put(181,85){\line(1,0){32}}

\put(-130,0){\makebox{iv) of \ref{defofplace} is obtained from i),ii) and iii) by including  minor conditions $m$.}}

\end{picture}
\end{equation}


\begin{Defn}\label{PlaceDisting} Let $(i,j)\in \mc(A)$. Then we denote by $\p(i,j)$ the set of places of $A$ to the left of $(i,j)$ in row $i$ if $(i,j)$ is a left main condition, and  to the left of the associated minor condition at position $(i, \bar j)$, if $(i,j)$ is a right main condition.
For $(k,l)\in \rmc(A)$ define in addition $\mathfrak m(k,l)$ to be the set of positions strictly between the right main condition $(k,l)$ and the associated minor condition $(k,\bar l)$ except in type $\fC_n$, where we include the minor condition itself  in case $k\neq \bar l$. 
Moreover define
\[
\mathfrak P=\bigcup_{(i,j)\in \mc(A)} \mathfrak p(i,j)\quad\text{and}\quad \mathfrak M= \bigcup_{(k,l)\in \rmc(A)}  \fm(k,l),
\]
so  $\Pl(\cO_A) = \mathfrak P\cup \mathfrak M$. \hfill$\square$
\end{Defn} 

We define a map $\f_A=\f:\Limb(A)\rightarrow \Pl(A)$ as follows:
For $(i,j)\in \lmc(A)$, we define $\f:\cL(i,j)\rightarrow \{(i,a)\in \UP\,|\, i<a<j\}$ setting $\f(a,j)=(i,a)$. This is obviously a bijection. Moreover, by \ref{coredef} part 2), inspecting figure \ref{corrpic} one sees immediately, that $\f$ maps positions  of the leg $\cL(i,j)$ intersecting some arm (as position $(\bar l,j)$ denoted by a big filled circle in \ref{corrpic})  bijectively to supplemental conditions in row $i$ to the left of $(i,j)$. Thus restricting $\f$ to $\cL(i,j)^\circ$ gives a bijection from  $\cL(i,j)^\circ$ onto  $\mathfrak p(i,j)$. Similarly for $(k,l)\in\rmc(A)$ setting $\f(b,l) = (k,b)$ we obtain a bijection  $\f$ from $\cL(i,j)^\circ$ onto $ \p(i,j)$. Finally, if $(k,l)\in \rmc(A)$  
we define $\f:\cA(k,l)\rightarrow \fm(k,l)$ by setting $\f(\bar l, a)=(k, \bar a)$ for $(\bar l, a)\in \cA(k,l) $ and again obtain a bijection. Thus:

\begin{Lemma}\label{placelimb}
Let $[A]\in \hat V$ be staircase. Then $\f_A$ is a bijection from $\Limb(A)$ to $\Pl(A)$.\hfill$\square$
\end{Lemma}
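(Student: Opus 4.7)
The plan is to verify bijectivity of $\f_A$ piece by piece and then glue the pieces together, exploiting the staircase hypothesis to ensure compatibility. First I would check that the stated decomposition $\Limb(A) = \cL^\circ \dot\cup \cA$ refines to a disjoint union indexed by main conditions. The staircase hypothesis places distinct elements $(i,j)\ne(i',j')$ of $\mc(A)$ in distinct columns, so the legs $\cL(i,j)$ and $\cL(i',j')$ are disjoint as subsets of different columns, and hence so are their reduced versions. Similarly, distinct right main conditions have distinct second coordinates $l$, therefore distinct row indices $\bar l$, so the arms $\cA(k,l)$ for different $(k,l)\in\rmc(A)$ lie in different rows and are disjoint.

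Next I would verify that each local map is a bijection onto its piece of $\Pl(A)$. For $(i,j)\in\mc(A)$, the rule $(a,j)\mapsto (i,a)$ is injective (determined by the second coordinate), and on the whole leg $\cL(i,j)$ it would parameterise exactly the positions of row $i$ strictly between column $i$ and column $j$ (respectively $\bar l$ for right main conditions). Restricting to $\cL(i,j)^\circ$ removes precisely those leg positions meeting an arm attached to some right main condition, and by Definition \ref{coredef}(2) these are exactly the positions whose $\f$-images are the supplementary conditions in row $i$. Hence the restriction is a bijection onto $\fp(i,j)$. Similarly for $(k,l)\in \rmc(A)$, the map $(\bar l, a)\mapsto (k,\bar a)$ on $\cA(k,l)$ is injective, and the constraint $(\bar l,a)\in \pUP$ combined with Definition \ref{PlaceDisting} identifies its image with $\fm(k,l)$; the $\fC_n$ edge case is absorbed by the extended arm $\cA(i,\bar i)$ of Definition \ref{lowerhook}.

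Finally I would assemble the local bijections into a single bijection onto $\Pl(A) = \fP\cup \fM$ by checking that the images do not collide across pieces. Since each row of $\pUP$ contains at most one main condition, the sets $\fp(i,j)$ (all in row $i$) for distinct main conditions are pairwise disjoint, and the same reasoning applies to the sets $\fm(k,l)$. Within a single right main condition $(k,l)$, the places $\fp(k,l)$ live in columns strictly less than $\bar l$ while $\fm(k,l)$ lives strictly between $\bar l$ and $l$ (with the $\fC_n$ refinement including column $\bar l$ itself when $k\ne \bar l$), so these two sets meet only trivially. Combining these disjointness statements with the local bijectivity of the previous paragraph yields the claim.

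The main technical hurdle will be the middle step: one must match $\cL(i,j)\setminus \cL(i,j)^\circ$ (the entries of the leg that are intersected by an arm) bijectively with the supplementary conditions in row $i$, in order to conclude that the restriction descends to a bijection $\cL(i,j)^\circ \to \fp(i,j)$. Inspection of diagram \ref{corrpic} makes the correspondence geometrically transparent, but to write it cleanly I would do a small case analysis separating left main from right main conditions, and verify the statement once more along the antidiagonal in type $\fC_n$ where minor conditions may coincide with arms of the enlarged type.
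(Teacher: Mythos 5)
Your proposal is correct and follows essentially the same route as the paper: the paper also defines $\f_A$ piecewise on the legs and arms, checks that each piece maps bijectively onto $\mathfrak p(i,j)$ respectively $\fm(k,l)$ (with the removed leg positions accounting exactly for the supplementary conditions, as read off from figure \ref{corrpic}), and then glues. You merely make explicit the disjointness bookkeeping (legs in distinct columns, arms in distinct rows, one main condition per row) that the paper leaves implicit.
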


Given a staircase character $[A]$ we now want to construct a character $[B] \in \cO_A$  whose entries at places are prescribed values from $\F_q$. So  choose  arbitrary field elements $\lambda_{ij}\in \F_q$ for all $(i,j)\in \Pl(A)$. To adjust the entry at the $(i,j)$ to $\lambda_{ij}$  we shall act on the character by some particular root subgroup element $x_{kl}(\alpha)$, where $(k,l)\in\Limb(\cO_A)$ is uniquely determined by $\f(k,l) = (i,j)$. In addition we have to ensure, that the entries on places, which have been already dealt with, will not be changed any more by subsequent moves. This will be done, by choosing an order, in which we work through $\Limb(A)$, namely working first the reduced legs of $\Limb(A)$ from left to right, each top down. Then we use  the root subgroups on arms associated with right main conditions, ordering the latter from left to right as well as the positions on the arms itself. Note, that all this concerns only the permutation action underlying the monomial action of  $U$ on $\cO_A$, thus we have not to worry about coefficients. 
So let  $(t,s)$ be a main condition.  We assume that on positions $(i,a)\in \mathfrak{p}(i,j)$ for main conditions $(i,j)$ with $j < s$ and on positions $(t,b)\in\mathfrak{p}(t,s)$ with $t<b<a<s$ we have already established $B_{ia}=\lambda_{ia}$ and $B_{tb}=\lambda_{tb}$ for a $[B]\in\cO_A$.  We inspect $[C] = [B].x_{as}(\beta)\in\cO_A$. Note that $\f(a,s) = (t,a)$.  

Using \ref{illTruncatedColumnOperationG} one sees, that we obtain $C$ by adding $-\beta$ times column $s$  to column $a$ and $\varepsilon\beta$ times column $\bar a$ to column $\bar s$ of $B$, if $s\neq n+1$. If $s=n+1$, then $U$ is of type $\fB_n$ and $C$ is obtained from $B$ by adding $-\beta$ times column $n+1$ to column $a$, then $\beta$ times column $\bar a$ to column $n+1$ and finally $\frac{1}{2}\beta^2$ times column $\bar a$ to column $a$. Thus we have for $1\leq d < a$: 
\begin{equation}\label{fillbyleftlegs}
C_{da} = B_{da} - \beta B_{ds} \quad\text{and} \quad C_{d\bar s} = B_{d\bar s} + \varepsilon\beta B_{d\bar a}\quad \text{if } s\neq n+1,
\end{equation}
where $\varepsilon$ is defined as in \ref{4action}, and 
\begin{equation}\label{lm2}
C_{da}=B_{da}-\beta B_{ds}+\frac{1}{2}\beta^2 B_{d\bar a}\quad\text{and} \quad C_{ds} = B_{ds} + \beta B_{d\bar a}\quad\text{if } s=\bar s = n+1.
\end{equation}
In particular for $d=t$, since $(t,s)$ is a main condition, we have $B_{ts}\neq 0$, but $B_{t\bar a} = 0$ and hence we can choose $\beta\in\F_q$ such, that $C_{ta} = \lambda_{ta}$. Moreover, since the position $(t,\bar a)$ is to the right of the main condition $(t,s)$,  we have $C_{t\bar s} = B_{t\bar s}$ and 
$0 = B_{t\bar a}$.

Now let $d \neq t$. If $B_{ds}$ or $B_{d\bar a}$ is nonzero, there must be a main condition $(d,r)$ in row $d$ in column $r$ strictly to the right of column $s$. This can be a left or a right main condition and in the latter case the corresponding minor condition $(d,\bar r)$ can be to right or to the left of $(d,a)$. Thus $(d,a) \in \mathfrak{p}(d,r)$, or $(d,a) \in \mathfrak{m}(d,r)$ with $s< r$. However the entries of positions to the left of $(d,r)$ will be adjusted later.
Moreover, if $(i,j)$ is a main condition in an earlier column than $(t,s)$, that is if $j < s$, we assume that $B_{ia}=\lambda_{ia}$ is already adjusted. But then, for  $d=i$  we see inspecting \ref{fillbyleftlegs}, that $C_{ia} = B_{ia}$, since $B_{is}=0$, the position $(i,s)$ being to the right of the main condition  $(i,j)$. Thus entries on places $\mathfrak{p}(i,j)$ for earlier main conditions $(i,j)$ remain unchanged under the action of $x_{as}(\beta)$.

\medskip
Here is an illustration for the case $s\leq n$ (the case $s>n$ being similar):

\begin{equation}\label{leftmainplaces1}
\begin{picture}(122,100)


\put(60,70){\line(-1,0){140}}
\put(60,70){\line(1,0){140}}
\put(-80,70){\line(1,-1){140}}
\put(200,70){\line(-1,-1){140}}


\put(-60,50){\circle*{3}}
\put(-68,47){\makebox{$t$}}
\put(-40,30){\circle*{3}}
\put(-48,27){\makebox{$i$}}
\put(-25,15){\circle*{3}}
\put(-35,11){\makebox{$d$}}
\put(115,11){\makebox{$M$}}
\put(-10,0){\circle*{3}}
\put(-20,-3){\makebox{$a$}}


\put(25,-35){\circle*{3}}
\put(15,-38){\makebox{$j$}}
\put(45,-55){\circle*{3}}
\put(35,-58){\makebox{$s$}}


\put(-12,75){\makebox{$a$}}
\put(42,75){\makebox{$s$}}
\put(128,75){\makebox{$\bar a$}}
\put(115,75){\makebox{$r$}}
\put(115,-21){\makebox{$r$}}
\put(73,75){\makebox{$\bar s$}}

\linethickness{1pt}
\put(40,48){\makebox{$M 0\cdots\cdots$}}  
\dottedline{1}(-57,51)(39,51)
\dottedline[$\star$]{8}(45,-55)(45,22)

\put(41,42){\makebox{$\star$}}  
\put(41,35){\makebox{$\star$}}

\linethickness{.5pt}

\put(20,26){\makebox{$M\, 0\, 0\cdots\cdots$}}  
\dottedline{.5}(-37,31)(19,31)
\dottedline{3}(25,22)(25,-35)

\put(45,0){\circle*{7}}
\put(-15,48){\makebox{$\times$}}
\put(-15,28){\makebox{$\circledcirc$}}
\put(-15,11){\makebox{$\times$}}


\put(130,85){\line(0,1){10}}
\put(130,95){\line(-1,0){50}}
\put(80,95){\vector(-1,-2){5}}
\put(100,97){\makebox{$\beta$}}

\put(45,85){\line(0,1){10}}
\put(45,95){\line(-1,0){50}}
\put(-5,95){\vector(-1,-2){5}}
\put(10,97){\makebox{$-\beta$}}

 \end{picture}
\end{equation}

\vskip100pt
Note that the entry at position $(t,a)$ is changed by the action of $x_{(a,s)}(\beta)$, that at position $(d,a)$ is changed, if $B_{(ds)}\ne 0$, (both marked $\times$),  whereas the entry at position $(i,a)$ marked $\circledcirc$ is not changed in \ref{leftmainplaces1} above).

\begin{Lemma}\label{fLM}
Let $[A]\in \hat V$ be staircase. For each $(t,a)\in \mathfrak P$ choose a scalar $\lambda_{ta}\in \F_q$. Then there exist a unique  $y=\prod_{(a,s)\in \cL^\circ}x_{as}(\alpha_{as})$,  where $\alpha_{as}\in \F_q$ for $(a,s)\in \cL^\circ$, such that  $[B]=[A].y\in\cO_A$ satisfies $B_{ta}=\lambda_{ta}$ for all $ (t,a)\in \mathfrak P$.
\end{Lemma}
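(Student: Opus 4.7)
The plan is to construct $y$ sequentially as a product of root subgroup elements, processing the positions $(a,s)\in\cL^\circ$ in a carefully chosen order so that each factor adjusts exactly the entry of the evolving character at the corresponding place $\f(a,s)$ without disturbing the entries that have been set at previous steps.

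First I would order $\cL^\circ$ lexicographically, processing the reduced legs $\cL(t,s)^\circ$ by increasing main-condition column $s$ and, within each leg, by increasing row index $a$ (top down). Set $[A^{(0)}]=[A]$, and let $(a_k,s_k)$ denote the $k$-th position of $\cL^\circ$ in this order, with $(t_k,a_k)=\f(a_k,s_k)$ the place to be adjusted; define $[A^{(k+1)}]=[A^{(k)}].x_{a_k,s_k}(\alpha_{a_k,s_k})$. By \ref{fillbyleftlegs} (and \ref{lm2} in the exceptional case $s_k=n+1$ of type $\fB_n$), the only columns of $A^{(k)}$ that are affected are column $a_k$ and column $\bar s_k$ (together with an additional contribution of column $\bar a_k$ into column $n+1$ when $s_k=n+1$), and in the crucial row $t_k$ one reads off
\[
A^{(k+1)}_{t_k a_k} \;=\; A^{(k)}_{t_k a_k} \,-\, \beta\, A^{(k)}_{t_k s_k} \,+\, \delta\beta^{2}\, A^{(k)}_{t_k\bar a_k},
\]
where $\beta=\alpha_{a_k,s_k}$, $\delta=\tfrac12$ in the $\fB_n$ exceptional case, and $\delta=0$ otherwise. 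Since $(t_k,s_k)\in\mc(A)$, the main-condition entry is fixed throughout the orbit, so $A^{(k)}_{t_k s_k}=A_{t_k s_k}\ne 0$; moreover $(t_k,\bar a_k)$ lies strictly to the right of this main condition, so $A^{(k)}_{t_k\bar a_k}=0$. The equation is therefore linear in $\beta$ with unique solution $\alpha_{a_k,s_k}=(A^{(k)}_{t_k a_k}-\lambda_{t_k a_k})/A^{(k)}_{t_k s_k}$.

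The main obstacle is to verify that this step preserves every previously adjusted entry, that is, $A^{(k+1)}_{t_l a_l}=A^{(k)}_{t_l a_l}$ for all $l<k$. Only columns $a_k$ and $\bar s_k$ can change, so only the cases $a_l\in\{a_k,\bar s_k\}$ need analysis. The case $a_l=\bar s_k$ is ruled out by the very definition of reduced legs: one would need $(\bar s_k,s_l)\in\cL(t_l,s_l)^\circ$, but $(\bar s_k,s_l)\in\cA(t_k,s_k)$, so this position has been explicitly removed when forming the reduced leg of $(t_l,s_l)$. In the remaining case $a_l=a_k$ one has $s_l<s_k$ (the possibility $s_l=s_k$ would force $a_l<a_k$, contradicting $a_l=a_k$) and necessarily $t_l\ne t_k$ (each row of a staircase character supports at most one main condition); both $(t_l,s_k)$ and $(t_l,\bar a_k)$ then lie strictly to the right of the main condition $(t_l,s_l)$, giving $A^{(k)}_{t_l s_k}=A^{(k)}_{t_l\bar a_k}=0$, and the displayed formula applied in row $t_l$ yields $A^{(k+1)}_{t_l a_l}=A^{(k)}_{t_l a_l}$, as required. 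Uniqueness of $y$ is then automatic: each $\alpha_{a_k,s_k}$ is forced by the linear equation above, and the uniqueness of the factorization $y=\prod x_{a,s}(\alpha_{a,s})$ in our fixed ordering of $\cL^\circ\subseteq\pUP$ follows from \ref{anyorder}.
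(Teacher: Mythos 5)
Your proof is correct and follows essentially the same route as the paper: the paper's proof consists precisely of the inductive discussion preceding the lemma (same ordering of $\cL^\circ$ by main-condition column and top-down within each leg, the same linear equation from \ref{fillbyleftlegs}/\ref{lm2} using $B_{ts}\neq 0$ and $B_{t\bar a}=0$, and the same check that earlier places are undisturbed), plus the observation that uniqueness follows from the linear equations and \ref{anyorder}. Your treatment of the column-$\bar s_k$ case via the definition of the reduced leg is a slightly more explicit version of what the paper leaves implicit, but it is not a different argument.
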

\begin{proof}
Note that by our construction $(t,a) = \mathfrak f(a,s)\in\mathfrak p(t,s)\subseteq \mathfrak P$.  Everything but the uniqueness part has been proved in our discussion above. Recall that we ordered $\cL^\circ$ by going through main conditions from left to right and each leg top down. This is obviously the order in which the product in $y=\prod_{(a,s)\in \cL^\circ}x_{as}(\alpha_{as})$ has to be taken. To determine the $\beta$ in \ref{fillbyleftlegs} (replacing $B$ by $A$ and $C$ by $B$) such that $B_{ts}=\lambda_{ts}$, amounts to solve a linear equation with a unique solution, and hence $y$ is unique.
\end{proof}

For later use we need the following auxiliary lemma:
 
\begin{Lemma}\label{fLMaux}
Let $[B]\in\hat V$ be staircase, $(t,s)\in\mc(B)$ and $(a,s)\in \cL^\circ(t,s)$. Thus $\f(a,s)=(t,a)\in\mathfrak p(t,s)$. Let $[C] = [B].x_{as}(\beta)$ with $\beta\in\F_q$. Assume in addition $B_{rw}=0$ for all $(r,w)\in{\fM}$, then $C_{rw}=0$ for all $(r,w)\in \fM$ as well and $C_{cd}=B_{cd}$  for all $(c,d)\in \core(A)$.
\end{Lemma}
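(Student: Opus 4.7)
The first step is to read off from the illustrations in \ref{illTruncatedColumnOperationG} and formulas \ref{fillbyleftlegs}, \ref{lm2} that the restricted column operation $x_{as}(\beta)$ modifies the matrix $B$ only in columns $a$ and $\bar s$ (and, for the exceptional case $s = n+1$ of type $\fB_n$, also column $s = n+1$; the two affected columns coincide with $a$ in the case $s = \bar a$ of type $\fC_n$). Concretely $C_{da} = B_{da} - \beta B_{ds}$ (with an extra $\tfrac12\beta^2 B_{d\bar a}$ when $s=n+1$) and $C_{d\bar s} = B_{d\bar s} + \varepsilon \beta B_{d\bar a}$. Both claims are therefore vacuous outside these columns and reduce to a case analysis when the column index lies in $\{a,\bar s\}$ (or is $n+1$ in type $\fB_n$).

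For the first claim, fix $(k,w) \in \fm(k,l)$ with $(k,l) \in \rmc(B)$, so $\bar l < w < l$. If $w = a$ one shows $B_{ks} = 0$: the alternative $s = l$ is forbidden by $(a,s) \in \pUP$ (which would force $a \leq \bar s = \bar l$, contradicting $a > \bar l$), while $s > l$ puts $(k,s)$ right of the main condition in row $k$ and $s < l$ places $(k,s) \in \fm(k,l)$. The auxiliary term $\tfrac12\beta^2 B_{k\bar a}$ appearing when $s = n+1$ is handled by the analogous placement $\bar a \in (\bar l,l)$. If $w = \bar s$ one shows $B_{k\bar a} = 0$: since $(a,s) \in \cL^\circ$ excludes $(a,s)$ from $\cA(k,l)$, we have $a \neq \bar l$, whence $a > \bar l$ yields $\bar l < \bar a < l$ so $(k,\bar a) \in \fm(k,l)$, while $a < \bar l$ yields $\bar a > l$ so $(k,\bar a)$ lies right of the main $(k,l)$. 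The case $w = n+1$ in type $\fB_n$ is analogous.

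For the second claim fix $(c,d) \in \core(A) = \core(B)$ with $d$ in an affected column. The condition $(a,s) \in \cL^\circ(t,s)$ means no right main condition sits in column $\bar a$, equivalently no minor condition sits in column $a$; this rules out $(c,a) \in \minc(B) \cup \suppl(B)$, so $(c,a) \in \mc(B)$, and then $B_{cs} = B_{c\bar a} = 0$ because $s, \bar a > a$ lie right of main in row $c$. When $d = \bar s$: for $(c,\bar s) \in \mc(B)$, $\bar a > \bar s$ is right of main; for $(c,\bar s) \in \minc(B)$ (only in types $\fB_n, \fD_n$), staircase forces $c = t$ with $(t,s) \in \rmc(B)$, so $\bar a > s$ places $(t,\bar a)$ right of main $(t,s)$; and for $(c,\bar s) \in \suppl(B)$, one rules out each possible location of $(c, \bar a)$ among $\mc(B)$, $\minc(B)$, $\suppl(B)$, or $\mathfrak P$ by contrasting the $\pUP$-inequality $a < \bar a$ (which forces $a \leq n$ in every type) with a mirror half-plane inequality ($\bar a \leq \tilde n$ or $a > \tilde n$) imposed by the hypothetical location. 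The case $d = n+1$ in type $\fB_n$ is handled by noting that column $n+1$ admits neither a minor nor a supplementary condition, forcing $(c, n+1) \in \mc(B)$, and then $\bar a > n+1$ places $(c,\bar a)$ right of main.

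The main obstacle is the supplementary subcase in part two: four potential locations of $(c,\bar a)$ must be ruled out individually, each via a mirror-symmetry argument that pits the $\cL^\circ$-exclusion of a right main in column $\bar a$ together with the $\pUP$-inequality $a < \bar a$ against the half-plane inequality forced by the hypothetical location. The numerics depend on type but the underlying mechanism is uniform.
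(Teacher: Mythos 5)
Your proposal is correct and follows essentially the same route as the paper's proof: reduce everything to the two columns $a$ and $\bar s$ (plus column $n+1$ in the exceptional $\fB_n$ case) via \ref{fillbyleftlegs} and \ref{lm2}, then verify position by position that the added terms $B_{ks}$, $B_{k\bar a}$ vanish, using the hypothesis on $\fM$, the staircase property, and the $\cL^\circ$-exclusion of $(a,s)$ from arms. Your case analysis for the supplementary conditions in column $\bar s$ is organized by ruling out locations of $(c,\bar a)$ rather than, as in the paper, directly locating the minor or left main condition to the right of $(c,\bar s)$ and concluding that $(c,\bar a)$ lies beyond the main condition of row $c$, but the two arguments amount to the same inequalities.
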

\begin{proof}
Recall that $B$ and $C$ differ only in columns $a$ and $\bar s$ (compare  \ref{leftmainplaces1}). If $(r,a)\in\mathfrak M$ then we have a minor condition $(r,c)$ with $c<a<s<\bar a<\bar c$ and $c<\bar s$ and hence $(r,a), (r,s), (r,\bar s), (r,\bar a)\in\mathfrak m(r,\bar c)$. With \ref{fillbyleftlegs} and \ref{lm2} we conclude $B_{r,a}=C_{r,a}=0$. 
Moreover $B$ and $C$ coincide on main conditions because they are elements of the same $U$-orbit of $\hat V$. Now $(a,s)\in \cL^\circ(t,s)$ implies that $(t,a)\not\in\minc(B)\cup\suppl(B)$, and hence there is neither a minor nor a supplemental condition in column $a$ by \ref{coredef}. If $(t,s)\in\lmc(A)$, then $s= n+1$ or  column $\bar s\subseteq \trir$. In both cases column $\bar s$  does not contain any minor or supplementary condition. So let $(t,s)\in\rmc(A)$. As before we argue, that column $a$ does not contain any minor or supplementary condition. Moreover, for the minor condition $(t,\bar s)$ in column $\bar s$ we have $C_{t\bar s} = B_{t\bar s}$ by \ref{fillbyleftlegs} and \ref{lm2}, since $(t,\bar a)$ is to the right of the main condition $(t,s)$ and therefore $B_{t\bar a}=0$. If $(i,\bar s)\in\suppl(B)$ with $i\not = t$ is another supplemental condition in column $\bar s$,  there must be a minor or left main condition $(i,\bar j)$ in row $i$ to the right of $(i,\bar s)$. Thus $\bar s < \bar j$ and hence $\bar j <j<s<\bar a$. Since either $(i,j)$ or $(i,\bar j)$ is a main condition, we conclude $B_{i\bar a}=0$. Again by \ref{fillbyleftlegs} and \ref{lm2} we obtain $C_{i\bar s} = B_{i\bar s}$, as desired.
\end{proof}

 Recall that  in \ref{4action} we defined  for the action of $X_{ij}$ on $\hat V$ always $\varepsilon=1$ except for type $\fC_n$ and $j > n$, where we set $\varepsilon=-1$.

\begin{Lemma}\label{fM}
Let $[B]\in \hat V$ be staircase and let $(k,l)\in\rmc(B)$. Suppose $\bar l<a< l$ or $\bar l\leq a<l$, if $U$ is of type $\fC_n$ . Then $(\bar l,\bar a)\in\cA(k,l)$ and 
$[C] = [B].x_{\bar l\bar a}(\beta)$ with $\beta\in\F_q$  satisfies:
\begin{enumerate}
\item [(1)] 
$C_{ra} = B_{ra} + \varepsilon\beta B_{rl}$, for $1\leq r<\bar l$. Moreover $C_{r\bar l} = B_{r\bar l} - \beta B_{r\bar a}$ unless $U$ is of type $\fB_n$ and $a = \bar a = n+1$. In this case $C_{r\bar l} = B_{r\bar l} -\beta B_{r,a}+\frac{1}{2}\beta^2B_{rl}$.
\item [(2)] Let $(r,s)\in\mathfrak P$ or $(r,s) \in\mathfrak m(d,t)$ with $(d,t)\in\rmc(B)$ with $t<l$. Then $C_{rs} = B_{rs}$.
\end{enumerate}
\end{Lemma}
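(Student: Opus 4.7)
My plan is to reduce the lemma to (a) a direct column-operation computation for part (1), and (b) a case analysis for part (2) in which the nontrivial-looking case turns out to be vacuous for structural reasons.

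For part (1), I would expand $x_{\bar l\bar a}(\beta)$ as a product of $\widetilde U$-root elements via Lemma \ref{rootsubgroupsU} and apply Proposition \ref{TruncatedColumnOperationG} factor by factor. In the generic case $x_{\bar l\bar a}(\beta) = \tilde x_{\bar l\bar a}(\beta)\tilde x_{a,l}(-\varepsilon\beta)$: the first factor adds $-\beta$ times column $\bar a$ to column $\bar l$, producing the term $-\beta B_{r\bar a}$ in $C_{r\bar l}$; the second adds $\varepsilon\beta$ times column $l$ to column $a$, producing $\varepsilon\beta B_{rl}$ in $C_{ra}$. The $\fB_n$ case $a=\bar a=n+1$ requires the additional factor $\tilde x_{\bar l,l}(-\tfrac12\beta^2)$ in the expansion of $x_{\bar l,n+1}(\beta)$, contributing the quadratic correction $+\tfrac12\beta^2 B_{rl}$ to $C_{r\bar l}$. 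In the $\fC_n$ antidiagonal case $a = \bar l$ one has $x_{\bar l,l}(\beta)=\tilde x_{\bar l,l}(\beta)$ and the two formulas in (1) coincide.

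For part (2), the central observation is that the column operations of part (1) affect only columns $\bar l$ and $a$ of $B$, so $C_{rs}=B_{rs}$ is automatic whenever $s\notin\{\bar l,a\}$. When $s=a$, part (1) reduces the claim to $B_{rl}=0$, and in every admissible case the main condition of row $r$ sits strictly to the left of column $l$: $a<r'\leq\tilde n<l$ if $(r,a)\in\mathfrak p(r,r')$ with $(r,r')$ left main; $r'<\bar a<l$ if $(r,r')$ is right main, since $(r,a)\in\mathfrak p(r,r')$ forces $a<\bar{r'}$; and $t<l$ by hypothesis if $(r,a)\in\mathfrak m(d,t)$. In each subcase $B_{rl}=0$ because the main is the rightmost nonzero entry in its row.

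The remaining subcase $s=\bar l$ is where I expect the real work, and here I claim the hypothesis is vacuous. The option $(r,\bar l)\in\mathfrak m(d,t)$ with $t<l$ fails trivially because $t>\tilde n\geq\bar l$ puts $\bar l$ outside the defining interval of $\mathfrak m(d,t)$. The option $(r,\bar l)\in\mathfrak P$ is the crux: column $\bar l$ contains the minor $(k,\bar l)$ of the given right main $(k,l)$, and $(r,\bar l)\in\mathfrak p(r,r')$ places $(r,\bar l)$ strictly to the left of a left main $(r,r')$ or of a minor $(r,\bar{r'})$ in row $r$, so the three clauses of the supplementary-condition definition in \ref{coredef} are all met. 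Staircase forces mains to be unique per column, so for $r\neq k$ the position $(r,\bar l)$ is neither main nor minor and must be supplementary, hence in $\core(B)$ and not in $\mathfrak P$; and for $r=k$ the position $(k,\bar l)$ is itself a minor, landing in $\core$ for types $\fB_n,\fD_n$ or in $\mathfrak M\neq\mathfrak P$ for $\fC_n$. Either way $(r,\bar l)\notin\mathfrak P$, which finishes the proof.
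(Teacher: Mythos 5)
Your proof is correct and follows essentially the paper's route: part (1) is the same factor-by-factor column-operation computation via \ref{rootsubgroupsU} and \ref{TruncatedColumnOperationG}, and for part (2) you make the same reduction to $s\in\{a,\bar l\}$ and, for $s=a$, the same argument that the hypothesis forces the main condition of row $r$ into a column strictly left of $l$ (the paper states this contrapositively: $B_{rl}\neq 0$ would put $(r,a)$ into $\fm(r,t)$ with $t>l$). For $s=\bar l$ the paper only says ``follows similarly'', and your vacuity argument via supplementary conditions is in fact the right way to discharge that case --- a direct attempt to show $B_{r\bar a}=0$ fails when row $r$ carries a left main condition in a column between $\bar a$ and $l$ --- so here you genuinely complete a step the paper leaves implicit (note only that in type $\fC_n$ with $l=\bar k$ the minor condition $(k,\bar l)$ sits on the diagonal, so your appeal to ``column $\bar l$ contains a minor condition'' requires reading \ref{coredef}(2) as counting these diagonal minor conditions, which is the reading the lemma itself needs to be true). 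One justification is off, although the claim it supports is correct: $(r,\bar l)\notin\fm(d,t)$ not because $t>\tilde n\geq\bar l$ --- that inequality is the one the columns of $\fm(d,t)$ \emph{do} satisfy on the right --- but because $t<l$ gives $\bar t>\bar l$, so column $\bar l$ lies strictly to the left of the minor condition $(d,\bar t)$ and hence outside the interval defining $\fm(d,t)$.
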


\begin{proof} 
Part (1) follows immediately from \ref{4action}.  
To prove (2)  observe that $C$ and $B$ differ only in columns $a$ and $\bar l$ and that we may  assume that $r\not=k$. Suppose $B_{rl}\not = 0$ for some $r\not=k$. Then  $(r,t)\in\rmc(B)$ for some $t$ with $l<t$, since $\tilde n < l$ and  $[B]$ is staircase. Thus $(r,\bar t)$ is a minor condition with $\bar t <\bar l <a$. This implies that $(r,a)\in\mathfrak m(r,t)$  with $l<t$ and hence  (2) holds for $s=a$.  The case $s=\bar l$ follows similarly.
\end{proof}

\begin{Cor}\label{fM2} Let $[B]\in\hat V$ be staircase. Choose $\lambda_{ka}\in\F_q$ for each $(k,a)\in\fM$ and let $(\bar l,\bar a)\in \cA$ be such, that $\mathfrak f(\bar l,\bar a) = (k,a)$. Then there exists a unique $y= \prod_{(\bar l,\bar a)\in \cA}x_{\bar l \bar a}(\alpha_{\bar l \bar a})$ with $\alpha_{\bar l \bar a}\in\F_q$ such that $[C] = [B].y$ satisfies $C_{ka}=\lambda_{ka}$ for all $(k,a)\in\fM$ and $C_{rs}=B_{rs}$ for all $(r,s)\in\mathfrak P$.
\end{Cor}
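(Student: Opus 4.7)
The plan is to apply elements of the root subgroups $X_{\bar l\bar a}$, one per position of $\cA$, in a carefully chosen order, using Lemma \ref{fM} as the main tool. The argument parallels the proof of Lemma \ref{fLM}, with legs replaced by arms: I will process the right main conditions $(k,l)\in\rmc(B)$ from left to right, i.e.\ by increasing $l$, and within each block iterate through $\cA(k,l)$ in an order to be specified.

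For each $(\bar l,\bar a)\in\cA(k,l)$, Lemma \ref{fM}(1) shows that acting by $x_{\bar l\bar a}(\beta)$ shifts the entry at $(k,a)=\f(\bar l,\bar a)$ by $\varepsilon\beta B_{kl}$. Since $(k,l)\in\mc(B)$ we have $B_{kl}\neq 0$, so $\beta=\alpha_{\bar l\bar a}$ is uniquely determined by the prescription $C_{ka}=\lambda_{ka}$. Lemma \ref{fM}(2) then guarantees that this move does not disturb any entry at a position of $\mathfrak P$ or of $\fm(d,t)$ for an earlier right main condition $(d,t)$ with $t<l$. Hence the outer induction over main conditions is consistent, and at the end the entries of $[C]$ on $\mathfrak P$ coincide with those of $[B]$, as required.

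It remains to check that the moves inside a single $(k,l)$-block do not undo each other. Because $x_{\bar l\bar a}(\beta)$ affects only columns $a$ and $\bar l$, the entries at two distinct positions $(k,a_1)$ and $(k,a_2)$ of $\fm(k,l)$ lie in distinct columns and so do not interfere in row $k$. In types $\fB_n$ and $\fD_n$ this already suffices: column $\bar l$ carries the minor condition $(k,\bar l)$, which is not in $\fM$, so its value is free to drift. The subtle case is type $\fC_n$, where $(k,\bar l)\in\fm(k,l)$ itself must be adjusted. I handle this by placing the position $(\bar l, l)\in\CC\cap\cA(k,l)$ last in the fixed order; the corresponding root element $x_{\bar l\, l}(\beta)=1+\beta e_{\bar l\, l}$ modifies only column $\bar l$ and therefore preserves all previously adjusted entries $(k,a)$ with $a<l$, while giving a unique $\beta$ setting $C_{k\bar l}=\lambda_{k\bar l}$ (the current value in position $(k,l)$ is still $B_{kl}\neq 0$).

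Uniqueness of the whole product $y$ then follows automatically, since at every step the scalar is the unique solution of a linear equation with nonzero leading coefficient. The step I expect to require the most care is the type-$\fC_n$ bookkeeping around the antidiagonal position $(\bar l, l)$; once this element is singled out and scheduled last within each block, the combination of Lemma \ref{fM}(1) for existence and uniqueness of each $\beta$ and Lemma \ref{fM}(2) for non-interference across blocks delivers the corollary.
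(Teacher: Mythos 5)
Your proposal is correct and follows essentially the same route as the paper: iterate Lemma \ref{fM}, working through the right main conditions from left to right and through each arm in the fixed order, using part (1) to solve uniquely for each scalar (the coefficient $B_{kl}\neq 0$ being a main condition) and part (2) to see that earlier blocks and $\mathfrak P$ are undisturbed. Your explicit treatment of the type-$\fC_n$ antidiagonal position $(\bar l,l)$, scheduled last within its block so that the adjustment of $(k,\bar l)\in\fm(k,l)$ is not overwritten, is exactly what the paper's ``previously defined order'' (arms read left to right) achieves implicitly.
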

\begin{proof}
Obviously we can find a uniquely determined $\beta\in\F_q$ in \ref{fM} such that $C_{ka}=\lambda_{ka}$. Now the claim follows, applying \ref{fM} several times working through the arms associated with right main conditions in our previously defined order.   
\end{proof} 

Combining \ref{fLM} and \ref{fM2} the main result of this section follows:

\begin{Theorem}\label{Orbitdescription} Let $[A]\in \hat V$ be staircase. Choose $\lambda_{kl}\in \F_q$ for all $(k,l)\in \Pl(A)$. Then there exist uniquely determined $\alpha_{ij}\in \F_q$  for $(i,j)\in \Limb(A)$  such that $[B] = [A]. \prod_{(i,j)\in \Limb(A) }x_{ij}(\alpha_{ij})$ satisfies
$B_{kl}=\lambda_{kl}$ for all $(k,l)\in \Pl(A)$.\hfill$\square$
\end{Theorem}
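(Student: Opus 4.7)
The plan is to split $\Limb(A) = \cL^\circ \dot\cup \cA$ and to build $y$ as a two-stage product $y = y_1 y_2$, with $y_1 = \prod_{(a,s)\in\cL^\circ} x_{as}(\alpha_{as})$ arranged in the order of Lemma \ref{fLM} (left to right across main conditions, top down along each leg) and $y_2 = \prod_{(\bar l,\bar a)\in\cA} x_{\bar l\bar a}(\alpha_{\bar l\bar a})$ arranged in the order of Corollary \ref{fM2}. By Lemma \ref{placelimb} the bijection $\mathfrak f_A$ restricts to $\cL^\circ \to \mathfrak P$ and $\cA \to \fM$, matching the partition $\Pl(A) = \mathfrak P \dot\cup \fM$ from \ref{PlaceDisting}, so this decomposition of $\Limb(A)$ is exactly aligned with the split of the prescribed data $\{\lambda_{kl}\}$.

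For existence, I would first apply \ref{fLM} to $[A]$ and to the scalars $\{\lambda_{ta} \,|\, (t,a)\in\mathfrak P\}$ to obtain a unique $y_1$ with $[B'] := [A].y_1$ satisfying $B'_{ta} = \lambda_{ta}$ for all $(t,a)\in\mathfrak P$. Since $[B']\in\cO_A$, it is again staircase with identical $\mc$, $\core$, $\Pl$ and $\Limb$, so the hypotheses of \ref{fM2} hold for $[B']$. Applying \ref{fM2} to $[B']$ and to the scalars $\{\lambda_{ka}\,|\,(k,a)\in\fM\}$ gives a unique $y_2$ such that $[B] := [B'].y_2 = [A].(y_1 y_2)$ satisfies $B_{ka} = \lambda_{ka}$ for all $(k,a)\in\fM$ while $B_{rs} = B'_{rs} = \lambda_{rs}$ is preserved on $\mathfrak P$. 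Then $y = y_1 y_2$ is the required product over $\Limb(A)$, taken in the combined order ``$\cL^\circ$ first, then $\cA$''.

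For uniqueness, suppose $y' = y'_1 y'_2$ (with $y'_1$ a product over $\cL^\circ$ and $y'_2$ a product over $\cA$, each in the fixed order) also realizes the prescribed $\lambda_{kl}$. The decisive point is clause (2) of \ref{fM2}: the arm factors making up $y'_2$ leave the $\mathfrak P$-entries of $[A].y'_1$ untouched. Hence $[A].y'_1$ and $[A].y_1$ agree on $\mathfrak P$, so the uniqueness assertion in \ref{fLM} forces $y'_1 = y_1$, and then the uniqueness assertion in \ref{fM2} forces $y'_2 = y_2$.

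The only substantive difficulty — already discharged in the preceding lemmas — is the ordering argument guaranteeing that each successive restricted column operation changes exactly one new place entry while leaving the previously fixed ones untouched. This non-interaction between the arm and leg actions (recorded in \ref{fLMaux} and in \ref{fM2}(2) respectively) is what dictates the chosen order ``legs first, then arms'' and is what makes the two-stage decomposition work; once those lemmas are in hand, the theorem is a direct assembly and no further calculation is required.
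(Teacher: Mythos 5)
Your proposal is correct and is essentially the paper's own argument: the paper derives the theorem by exactly this combination of Lemma \ref{fLM} (legs first, fixing the entries on $\mathfrak P$) and Corollary \ref{fM2} (arms second, fixing $\fM$ while clause (2) protects $\mathfrak P$), with uniqueness inherited from the uniqueness assertions of those two results. No gaps.
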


As a further consequence we obtain a lower bound for the size of the orbit $\cO_A$ for $[A]\in \hat V$ being a staircase:
\begin{Cor}\label{orbitsize}
Let $a=|\Pl(A)|$, the number of places of linear characters in $\cO_A$. Then $a=|\Limb(A)|$ and $|\cO_A|\geqslant q^a$.\hfill$\square$
\end{Cor}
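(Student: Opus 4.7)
The plan is to read off both assertions from results already established. The first equality $a=|\Pl(A)|=|\Limb(A)|$ is immediate from Lemma \ref{placelimb}, which provides the explicit bijection $\f_A:\Limb(A)\to\Pl(A)$. So nothing new is needed for the equality of cardinalities.

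For the inequality $|\cO_A|\geq q^a$, I would apply Theorem \ref{Orbitdescription} directly. That theorem asserts that for every tuple $(\lambda_{kl})_{(k,l)\in\Pl(A)}\in\F_q^{\Pl(A)}$ there exists an element $[B]\in\cO_A$ with $B_{kl}=\lambda_{kl}$ on all places. Hence the map
\[
\Phi:\F_q^{\Pl(A)}\longrightarrow\cO_A,\qquad (\lambda_{kl})\longmapsto [B],
\]
is well-defined. Distinct tuples in $\F_q^{\Pl(A)}$ produce characters $[B]$ that differ at some place $(k,l)\in\Pl(A)$, hence are distinct as elements of $\hat V$ (recall from Notation \ref{lide} that $[B]$ is determined by the matrix $B$, and its entries on $\pUP$ are exactly $B_{ij}$). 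Thus $\Phi$ is injective, giving $|\cO_A|\geq|\F_q^{\Pl(A)}|=q^a$.

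The only subtlety worth mentioning is making sure that the prescribed entries on $\Pl(A)$ really determine the character $[B]\in\cO_A$ uniquely, i.e.\ that one does not accidentally identify two different tuples via some entries outside $\Pl(A)$. But since $[A]$ is staircase, Remark \ref{threepositions} tells us that the support of any $[B]\in\cO_A$ lies in $\core(A)\cup\Pl(A)$, and $\core(A)=\core(B)$ is determined by $\main(A)$ alone; moreover, the entries on $\main(A)$ are constant across $\cO_A$ (being the rightmost nonzero entries in each row). So two characters $[B],[B']\in\cO_A$ that agree on every place of $\Pl(A)$ may still differ on $\core(A)\setminus\main(A)$, but this does not affect the argument: differing on $\Pl(A)$ already suffices to distinguish them, which is all injectivity of $\Phi$ requires. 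Hence there is no real obstacle, and the corollary follows in two lines.
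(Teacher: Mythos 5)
Your proposal is correct and follows exactly the route the paper intends: the equality $a=|\Limb(A)|$ is Lemma \ref{placelimb}, and the bound $|\cO_A|\geq q^a$ comes from the injection $\F_q^{\Pl(A)}\to\cO_A$ furnished by Theorem \ref{Orbitdescription}, with injectivity guaranteed because characters realizing distinct tuples already differ at some place in $\pUP$. The paper states the corollary without proof as an immediate consequence, and your write-up simply makes that explicit; no gaps.
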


Choosing $\lambda_{kl}=0$ for all $(k,l)\in \Pl(A)$ we get as special case of \ref{Orbitdescription}:

\begin{Cor}\label{tocore}
Each staircase orbit contains a core character.\hfill$\square$
\end{Cor}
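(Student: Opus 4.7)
The plan is to invoke Theorem \ref{Orbitdescription} directly with the specific choice of scalars $\lambda_{kl} = 0$ for every $(k,l) \in \Pl(A)$. Theorem \ref{Orbitdescription} produces, for any prescribed family of field values at the places, a uniquely determined product $y = \prod_{(i,j)\in \Limb(A)} x_{ij}(\alpha_{ij})$ such that $[B] := [A].y$ realises these values at the places. With the zero choice, the resulting $[B]\in\cO_A$ satisfies $B_{kl}=0$ for all $(k,l)\in \Pl(A)$.

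It then remains to argue that such a $[B]$ is indeed a core character. First I would recall Remark \ref{threepositions}, which asserts that the only positions where an element of $\cO_A$ can have nonzero entries lie in $\core(A) \cup \Pl(A)$, with the union being disjoint by the very definition of $\Pl(A)$. Since we have arranged $B_{kl}=0$ on all of $\Pl(A)$, we conclude $\supp(B) \subseteq \core(A)$. Finally, by Definition \ref{coredef} part 3), the core is constant along the orbit, i.e.\ $\core(B) = \core(A)$; hence $\supp(B) \subseteq \core(B)$, which is precisely the definition of a core character in Definition \ref{coredef} part 5).

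There is no real obstacle here, since all the genuine work was already carried out in the proofs of Lemmas \ref{fLM} and \ref{fM} (and their combination in Theorem \ref{Orbitdescription}): the delicate point was to show that one can annihilate place entries in a prescribed order without disturbing the entries at places (and core positions) already dealt with. The corollary is merely the specialization of that theorem to zero scalars, followed by a bookkeeping observation about supports using Remark \ref{threepositions}.
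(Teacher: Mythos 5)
Your proposal is correct and matches the paper's own argument, which likewise obtains the corollary by specializing Theorem \ref{Orbitdescription} to the choice $\lambda_{kl}=0$ for all $(k,l)\in\Pl(A)$. The extra bookkeeping you supply via Remark \ref{threepositions} and Definition \ref{coredef} is exactly the implicit step the paper leaves to the reader.
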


For later use, we state a technical lemma:
\begin{Lemma}\label{omit1move}
Let $[A]\in \hat V$ be staircase such that  $A_{rw}=0$ for all positions $(r,w)\in \fM$. Let $[C]\in \cO_A$ be the core character  derived by \ref{tocore}. Then 
\begin{equation}\label{omit1moveformula}
C=\sum_{(i,j)\in \core(A)} A_{ij}e_{ij}. 
\end{equation}
\end{Lemma}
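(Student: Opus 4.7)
By Corollary~\ref{tocore}, the core character $[C]\in\cO_A$ is obtained from Theorem~\ref{Orbitdescription} by prescribing $\lambda_{kl}=0$ for every $(k,l)\in\Pl(A)$. The element $y$ acting on $[A]$ decomposes as $y=y_1 y_2$, with
\[
y_1=\prod_{(a,s)\in\cL^\circ}x_{as}(\alpha_{as})\qquad\text{and}\qquad y_2=\prod_{(\bar l,\bar a)\in\cA}x_{\bar l\bar a}(\alpha_{\bar l\bar a})
\]
supplied by Lemma~\ref{fLM} and Corollary~\ref{fM2} respectively, the factors taken in the previously fixed order (reduced legs first, then arms). Set $[B]:=[A].y_1$, so that $[C]=[B].y_2$.

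The first step is to iterate Lemma~\ref{fLMaux} factor-by-factor along $y_1$, starting from $[A]$. By hypothesis $A$ vanishes on every position in $\fM$, and each elementary leg move preserves this vanishing condition while fixing every entry at a position of $\core(A)$. Combined with Lemma~\ref{fLM}, the intermediate character $[B]$ therefore satisfies $B_{ta}=0$ for all $(t,a)\in\mathfrak P$, $B_{rw}=0$ for all $(r,w)\in\fM$, and $B_{cd}=A_{cd}$ for all $(c,d)\in\core(A)$.

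The second step is to show $y_2=1$. Corollary~\ref{fM2} characterises $y_2$ \emph{uniquely}, among products of its prescribed form, by the conditions $C_{ka}=0$ on $\fM$ and $C_{rs}=B_{rs}$ on $\mathfrak P$. Both conditions are already satisfied by $[B]$ itself (the first by the previous step, the second trivially), so the identity element is one valid choice; by uniqueness $y_2=1$ and hence $C=B$. Since $C=B$ vanishes on $\Pl(A)=\mathfrak P\cup\fM$, agrees with $A$ on $\core(A)$, and both $C$ and $A$ vanish on the complement $\pUP\setminus(\core(A)\cup\Pl(A))$ by Remark~\ref{threepositions}, the desired formula \eqref{omit1moveformula} follows.

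The only delicate ingredient is the propagation of the vanishing condition on $\fM$ along a single leg move, which is precisely the content of Lemma~\ref{fLMaux}. Once this inductive claim is secured, the collapse $y_2=1$ drops out of the uniqueness clause of Corollary~\ref{fM2}, and no further computation is needed.
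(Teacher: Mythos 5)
Your proposal is correct and follows essentially the same route as the paper: factor the unique $y$ of Theorem~\ref{Orbitdescription} as $y=y_1y_2$, use Lemma~\ref{fLMaux} iteratively to see that $[B]=[A].y_1$ still vanishes on $\fM$ and agrees with $A$ on $\core(A)$, and then invoke the uniqueness clause of Corollary~\ref{fM2} to force $y_2=1$. The only cosmetic difference is that the paper deduces the vanishing of $B$ on $\mathfrak P$ from $[C]$ being a core together with the invariance statement in \ref{fM2}, whereas you obtain it directly from Lemma~\ref{fLM} with $\lambda=0$; these are equivalent.
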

\begin{proof}

By \ref{Orbitdescription} there exists a unique $y$ such that
$[C]=[A].y$ with  $y$  written as $y=y_{_1}y_{_2}$ where
\[
y_{_1}= \prod_{(i,j)\in \cL^\circ}x_{ij}(\alpha_{ij});   \quad y_{_2}= \prod_{(i,j)\in  \cA}x_{ij}(\alpha_{ij})
\]
 with uniquely determined $\alpha_{ij}\in \F_q \text{ for } (i,j)\in \Limb(A)$.
 Let $[B]=A. y_{_1}$. Then by \ref{fM2}, we have $[C]_{ab}=[B]_{ab}$ for all $(a,b)\in \mathfrak P$. Since $[C]$ is a core, we have
 $[B]_{ab}=[C]_{ab}=0$ for all $(a,b)\in \mathfrak P$. Moreover by \ref{fLMaux}  we have  $[B]_{ab}=[C]_{ab}=0$ for all $(a,b)\in  \fM$, since $[A]$ satisfies assumption: $A_{rw}=0$ for all positions $(r,w)\in \fM$. Thus  $[B]_{ab}=[C]_{ab}=0$ for all $(a,b)\in \Pl(A)=\mathfrak P\cup\fM$. This implies 
   $[B]=[C]$ and $y_{_2}=1$. Then the formula  (\ref{omit1moveformula}) follows by  the statements of the values on positions $(c,d)\in \core(A)$ in \ref{fLMaux} and \ref{fLM}.
 \end{proof}

\begin{Remark}
By the lemma above, we can also derive a core character in a staircase orbit by first putting zeros in positions of $\fM$, then zeros in  positions of $\mathfrak P$. In the next section we shall show the core character is unique in a staircase orbit.\hfill$\square$
\end{Remark}

\section{Classification of staircase orbits}
In the last section we have seen that  every staircase orbit $\cO_A, A\in V$ contains a core character $[C]\in \cO_A$. Now we shall show that this core character is unique. Thus the core characters classify the staircase  $U$-orbits in $\hat V$.

\begin{Lemma}\label{def of J(A)}
Let $[A]\in\hat V$ be be a staircase character. Then $J(A)= \pUP \setminus \Limb(A)$ is a closed subset of $\pUP$. Therefore the subgroup $U_{J(A)}$ of $U$ generated by the root subgroups $X_{ij}$ with $(i,j)\in J(A)$ is a pattern subgroup of  $U$.
\end{Lemma}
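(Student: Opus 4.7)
The plan is to invoke Lemma \ref{Upattern}, which reduces closedness to verifying the two combinatorial conditions (i) and (ii) for $J(A)=\pUP\setminus\Limb(A)$. I would argue each via its contrapositive: assuming $(i,k)\in\Limb(A)$, deduce that the corresponding other position(s) must lie in $\Limb(A)$ as well. Since $\Limb(A)=\cL\cup\cA$, every element of $\Limb(A)$ belongs to either the leg $\cL(r,s)$ or the arm $\cA(r,s)$ of some main condition $(r,s)\in\mc(A)$, and this dichotomy drives the entire case analysis.

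For condition (i), I would assume $(i,j),(j,k)\in\pUP$ and $(i,k)\in\Limb(A)$. If $(i,k)\in\cL(r,k)$ (so $k$ is the column of the main condition and $r<i$), then $j>i>r$ together with $(j,k)\in\pUP$ forces $(j,k)\in\cL(r,k)\subseteq\Limb(A)$, contradicting $(j,k)\in J(A)$. If instead $(i,k)\in\cA(r,s)$, the arm lives in a single row (row $\bar s$ for $(r,s)\in\UP$, or row $r$ for the exceptional $\fC_n$ case $(r,s)\in\CC$), and since $(i,j)\in\pUP$ lies in the same row as $(i,k)$, the defining condition of the arm immediately gives $(i,j)\in\cA(r,s)\subseteq\Limb(A)$, again a contradiction.

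For condition (ii), assume $(i,j),(\bar k,\bar j)\in\pUP$ and $(i,k)\in\pUP\cap\Limb(A)$; note $i<j<k$. The key observation is that every main condition $(r,k)$ simultaneously has a leg $\cL(r,k)$ and an arm $\cA(r,k)=\{(\bar k,a)\in\pUP\}$ indexed by the same row $\bar k$. Thus if $(i,k)\in\cL(r,k)$, then $(\bar k,\bar j)\in\pUP$ automatically belongs to $\cA(r,k)\subseteq\Limb(A)$, contradicting $(\bar k,\bar j)\in J(A)$. If $(i,k)\in\cA(r,s)$, the argument is the same row-based absorption used in case (i), placing $(i,j)$ on the same arm. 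The main obstacle I anticipate is the type $\fC_n$ bookkeeping for main conditions on the antidiagonal $\CC$, where the arm definition changes to $\cA(r,\bar r)=\{(r,a)\in\UP\}$; this is handled by the elementary observation that $\cL(r,\bar r)=\emptyset$ (no $(a,\bar r)$ with $a>r$ can lie in $\pUP$), so legs attached to antidiagonal main conditions never contribute, and the arm version of the argument carries over verbatim with rows/columns swapped.
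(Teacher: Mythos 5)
Your proof is correct and follows essentially the same route as the paper: both invoke Lemma \ref{Upattern} and argue the contrapositive of conditions (i) and (ii) by splitting according to whether $(i,k)$ lies on a leg $\cL(r,k)$ or an arm of some main condition, using that arms fill an entire row of $\pUP$ and that a leg in column $k$ absorbs lower positions of that column. Your extra remarks on the type $\fC_n$ antidiagonal case (empty legs, row-$r$ arms) are consistent with the paper's definitions and cause no difficulty.
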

\begin{proof}
Firstly let $(i,j), (j,k)\in \pUP$ and assume that $(i,k)\in \Limb(A)$. Suppose $(i,k)\in \cL(l,k)$ with $(l,k)\in \mc(A)$. Then $l<i$. Since $(i,j)\in \pUP$, we have $l<i<j$ and hence  $(j,k)\in \pUP$ implies $(j,k)\in \cL(l,k)$. Now suppose $(i,k)\in \cA(s, \bar i)$ with $(s, \bar i)\in \mc(A)$. Since $i<j<k$, we have $(i,j)\in \cA(s, \bar i)$. Thus $(i,j), (j,k)\notin \Limb(A)$ implies $(i,k)\notin \Limb(A)$.

Secondly suppose $(i,j), (\bar k, \bar j)\in \pUP$ and assume that $(i,k)\in \Limb(A)$. Suppose $(i,k)\in \cL(l,k)$ with $(l,k)\in \mc(A)$. Since $(\bar k, \bar j)\in \pUP$ we have $1\leqslant \bar k \leqslant n$ and hence $k>n$. Then $\cA(l,k)$ is well-defined and $(\bar k, \bar j)\in \cA(l,k)$ since $(\bar k, \bar j)\in \pUP$. Now suppose $(i,k)\in \cA(s, \bar i)$ with $(s, \bar i)\in \mc(A)$. Since $(i,j), (\bar k, \bar j)\in \pUP$ we have $i<j<k$ and hence $(i,j)\in  \cA(s, \bar i)$. Thus $(i,j), (\bar k, \bar j)\notin \Limb(A)$ implies $(i,k)\notin \Limb(A)$.
The lemma follows by \ref{Upattern} and  \ref{def of generator}.
\end{proof}

Now  corollary \ref{orbitsize} and lemmata \ref{placelimb} and \ref{def of J(A)} imply immediately:

\begin{Cor}\label{stabupbound} For $[A]\in\hat V$ define $\Stab_U[A]=\{u\in U\,|\,[A].u = [A]\}$. Let $J(A)= \pUP \setminus \Limb(A)$, $m=|J(A)|$ and $a=|\Limb (A)|$. Then $a=|\Pl(A)|$  and  $|\pUP| = m+a$. Moreover $q^a \leq |\cO_A| = [U:\Stab_U[A]]$ and hence $|\Stab_U[A]|\leq q^m = |U_{J(A)}|$. \hfill$\square$
\end{Cor}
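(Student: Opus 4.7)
The plan is to chain together the already established results; nothing new needs to be invented. First I would observe the purely combinatorial facts: by Lemma \ref{placelimb} the map $\mathfrak f_A : \Limb(A) \to \Pl(A)$ is a bijection, which yields $a = |\Limb(A)| = |\Pl(A)|$. Since $J(A)$ is defined as $\pUP \setminus \Limb(A)$, this is a disjoint decomposition, giving $|\pUP| = |J(A)| + |\Limb(A)| = m + a$ immediately.

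Next I would invoke Corollary \ref{orbitsize} which (via Theorem \ref{Orbitdescription}) produces $q^{|\Pl(A)|} = q^{a}$ many distinct characters $[B] \in \cO_A$ by freely choosing scalars at places. Combined with the orbit-stabiliser identity $|\cO_A| = [U:\Stab_U[A]]$ (standard, since $\cO_A$ is by definition the $U$-orbit of $[A]$ under the dot-action), this yields
\[
q^a \;\leq\; |\cO_A| \;=\; [U:\Stab_U[A]],
\]
so $|\Stab_U[A]| \leq |U|/q^a$.

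Finally, to identify $|U|/q^a$ with $q^m = |U_{J(A)}|$, I would appeal to Lemma \ref{def of J(A)} (which asserts $J(A)$ is closed in $\pUP$, so $U_{J(A)}$ is a genuine pattern subgroup) together with Theorem \ref{anyorder} applied to any chosen linear ordering of $J(A)$: every element of $U_{J(A)}$ is uniquely a product of $|J(A)| = m$ root subgroup factors, each root subgroup having order $q$ by Lemma \ref{rootsubgroupsU}. Hence $|U_{J(A)}| = q^m$. Combined with $|U| = q^{|\pUPs|} = q^{m+a}$ from the remark following Theorem \ref{unique}, we obtain $|U|/q^a = q^m = |U_{J(A)}|$, completing the chain of inequalities.

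There is essentially no obstacle here; the corollary is a straightforward bookkeeping consequence of Corollary \ref{orbitsize}, the orbit-stabiliser theorem, and Lemma \ref{def of J(A)}. The only point requiring a moment of care is ensuring that the disjoint union $\pUP = J(A) \dot\cup \Limb(A)$ really gives $|U| = q^{m+a}$, but this is immediate from Theorem \ref{unique} together with Lemma \ref{placelimb}.
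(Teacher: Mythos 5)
Your proposal is correct and is precisely the argument the paper intends: the corollary is stated without proof, prefaced only by "Now corollary \ref{orbitsize} and lemmata \ref{placelimb} and \ref{def of J(A)} imply immediately," and you chain exactly those results together with orbit--stabiliser and the order count $|U|=q^{|\pUPs|}$ from the remark after Theorem \ref{unique}. Nothing is missing.
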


\begin{Lemma}\label{core position invariant}
Let $[A]\in \hat V$ be a staircase character and  $J(A)$ be defined as in lemma \ref{def of J(A)}. If $\supp(A)\cap \trir=\rmc(A),$ then 
$
[B] = [A].x_{ij}(\alpha)=[A].\tilde x_{ij}(\alpha) \text{ for all } (i,j)\in J(A), \alpha\in \F_q.
$
Moreover for $u\in U_{J(A)}$ we have $
([A].u)_{st}=[A]_{st},\,\forall \, (s,t)\in \core(A).
$
\end{Lemma}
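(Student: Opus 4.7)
The plan is to reduce the $x_{ij}(\alpha)$-action on $[A]$ to the corresponding restricted column operation $\tilde x_{ij}(\alpha)$, and then to show that such an operation fixes every core entry of $A$.

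First I would establish the equality $[A].x_{ij}(\alpha) = [A].\tilde x_{ij}(\alpha)$. By Lemma \ref{rootsubgroupsU} I can factor $x_{ij}(\alpha) = \tilde x_{ij}(\alpha)\,\xi$, where $\xi$ consists of $\tilde x_{\bar j\bar i}(\pm\alpha)$ together with an extra $\tilde x_{i\bar i}(-\tfrac{1}{2}\alpha^2)$ when $U$ is of type $\fB_n$ with $j=n+1$, and where $\xi$ is trivial when $(i,j)\in\CC$ (the $\fC_n$-antidiagonal case, which settles the claim immediately). Setting $[A'] = [A].\tilde x_{ij}(\alpha)$, Proposition \ref{TruncatedColumnOperationG} tells me that $A'$ agrees with $A$ outside column $i$. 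Each factor of $\xi$ corresponds to a position not in $\pUP$, so by \ref{rootelmaction} it acts on $[A']$ without a phase and only as a column operation adding a scalar multiple of column $\bar i$ of $A'$---equivalently of $A$---to either column $\bar j$ or column $i$. Hence $\xi$ fixes $[A']$ as soon as $A_{r,\bar i} = 0$ for every row $r$ for which the target position $(r,\bar j)$ or $(r,i)$ lies in $\pUP$.

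The key step is to verify this vanishing from the hypothesis. Since $(i,j)\in\pUP$ forces $i\leq\tilde n$, I have $\bar i>\tilde n$, placing $(r,\bar i)$ in $\trir$. Suppose $A_{r,\bar i}\neq 0$; the hypothesis $\supp(A)\cap\trir = \rmc(A)$ then forces $(r,\bar i)\in\rmc(A)$. Now I would invoke the definition of the arm: for $r<i$ the standard arm $\cA(r,\bar i) = \{(i,a)\in\pUP\}$ is precisely row $i$ inside $\pUP$; for $r=i$ (possible only in type $\fC_n$) the special arm $\cA(i,\bar i) = \{(i,a)\,|\,i<a<\bar i\}$ similarly contains every $(i,a)\in\pUP$ with $a\neq\bar i$. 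In either situation $(i,j)\in\cA(r,\bar i)\subseteq\Limb(A)$, contradicting $(i,j)\in J(A)$. Hence $A_{r,\bar i}=0$ at every row that would matter for $\xi$, and the first assertion follows.

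For the moreover-part I would iterate. A restricted column operation $\tilde x_{ij}(\alpha)$ with $(i,j)\in J(A)$ touches only column $i\in\tril$, so $\supp(A)\cap\trir$ and therefore $\rmc(A) = \mc(A)\cap\trir$ are preserved along the orbit of $[A]$ under $U_{J(A)}$, and the hypothesis propagates. Consequently, applying the first part repeatedly, $[A].u = [A].\tilde u$ for a product $\tilde u$ of such operations. It remains to show each step fixes core entries. For $(s,t)\in\core(A)$ with $t\neq i$ this is automatic. For $t=i$ I argue that column $i$ cannot carry any minor or supplementary condition: either kind would force a right main $(s',\bar i)\in\rmc(A)$ (directly for a minor, by definition via a minor in the same column for a supplementary), and its arm would again place $(i,j)$ into $\Limb(A)$, which is impossible. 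So the only core possibility in column $i$ is a left main $(s,i)\in\lmc(A)$, for which the definition of ``rightmost entry'' gives $A_{s,j}=0$; the column operation then alters $A_{s,i}$ by $-\alpha A_{s,j}=0$, leaving it intact.

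The main obstacle is routing the arm-contradiction uniformly across types $\fB_n$, $\fC_n$, $\fD_n$, including the $\fB_n$ extra factor at $j=n+1$ and the $\fC_n$-antidiagonal arms: each case must be funnelled through the same observation that $\cA(r,\bar i)$ is a subset of row $i$ and therefore meets every $(i,j)\in\pUP$.
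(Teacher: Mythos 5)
Your proposal is correct and follows essentially the same route as the paper's proof: show that the hypothesis together with $(i,j)\in J(A)$ forces column $\bar i$ of $A$ to vanish (via the arm of a putative right main condition $(r,\bar i)$ covering row $i$), so the extra factors of $x_{ij}(\alpha)$ act trivially; then observe that column $i$ can carry no minor or supplementary condition and that a left main condition $(s,i)$ is unaffected because $A_{sj}=0$. Your write-up is somewhat more explicit than the paper's (in particular on the type-$\fC_n$ antidiagonal and type-$\fB_n$ $j=n+1$ cases and on the propagation of the hypothesis along the $U_{J(A)}$-orbit), but the argument is the same.
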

\begin{proof} 
If column $\bar i$ contains a nonzero value at position $(k,\bar i)\in \rmc$, position $(i,j)$ is on $\cA(k,\bar i)$ and hence not contained in $J(A)$. Thus $(i,j)\in J(A)$ implies that column $\bar i$ is a zero column and inspecting \ref{illTruncatedColumnOperationG} we see that the first claim of the lemma holds. Moreover $A$ and $B$ differ only in column $i$ and in particular  $\supp(B)\cap \trir=\rmc(A) = \rmc(B)$. 
By the same argument column $i$ cannot contain a minor condition and hence no supplementary conditions as well. Moreover, if it contains a left main condition $(l,i)$ then $A_{lj}=0$ and hence adding a multiple of column $j$ to column $i$ will not change $A_{li}$. Thus $(l,i)$ is  then the only core position on column $i$, and we conclude $B_{st}=A_{st}$ for all $(s,t)\in \core(A)$. From this the second claim of the lemma follows immediately.
\end{proof}

\begin{Cor}\label{J in stab}
Let $[A]\in \hat V$ be a verge character, and  $J(A)$ be defined as in lemma \ref{def of J(A)}.  Then  $U_{J(A)} =  \Stab_U[A]$.
\end{Cor}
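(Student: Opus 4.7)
The plan is to establish $U_{J(A)} = \Stab_U[A]$ by combining the order bound $|\Stab_U[A]| \leq q^m = |U_{J(A)}|$ from Corollary \ref{stabupbound} with the reverse inclusion $U_{J(A)} \subseteq \Stab_U[A]$. Since the root subgroups $X_{ij}$ with $(i,j) \in J(A)$ generate $U_{J(A)}$, it suffices to show that every such generator $x_{ij}(\alpha)$, $\alpha \in \F_q$, fixes $[A]$ under the dot-action; equality of orders then forces equality of the two subgroups.

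The first step is to collapse the action of $x_{ij}(\alpha)$ to a single restricted column operation. Because $[A]$ is verge we have $\supp(A) = \main(A) = \lmc(A) \cup \rmc(A)$, and since $\lmc(A) \subseteq \tril$ and $\rmc(A) \subseteq \trir$, it follows that $\supp(A) \cap \trir = \rmc(A)$. The hypothesis of Lemma \ref{core position invariant} is therefore met, yielding $[A].x_{ij}(\alpha) = [A].\tilde x_{ij}(\alpha)$ for all $(i,j) \in J(A)$. By Proposition \ref{TruncatedColumnOperationG} the right-hand side is obtained by adding $-\alpha$ times column $j$ of $A$ to column $i$ and then projecting out entries outside $\pUP$, so the problem reduces to showing this restricted column operation acts trivially on $[A]$.

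The core calculation is then entirely local. Since $A$ is verge, the only possibly nonzero entries of column $j$ sit at main conditions, and by the staircase property there is at most one row $r$ with $(r,j) \in \main(A)$. If column $j$ is zero the operation is trivial, so assume such an $r$ exists. The assumption $(i,j) \in J(A) = \pUP \setminus \Limb(A)$ rules out $r < i$, since in that case $(i,j)$ would lie on the leg $\cL(r,j) \subseteq \Limb(A)$. Thus $r \geq i$, and the single nonzero update $-\alpha A_{rj}$ hits position $(r,i)$, which is either the diagonal point $(i,i)$ or strictly below the diagonal; in both cases the projection $\pi_{\pUPt}$ annihilates the entry, so $[A].\tilde x_{ij}(\alpha) = [A]$.

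The step I expect to require the most care is the uniform applicability of Lemma \ref{core position invariant} across the type-specific root subgroup formulas of Lemma \ref{rootsubgroupsU} --- in particular the three-factor expression for $x_{i,n+1}(\alpha)$ in type $\fB_n$ and the antidiagonal generators $x_{i\bar i}(\alpha)$ in type $\fC_n$. However, the verge hypothesis together with $(i,j) \in J(A)$ already forces column $\bar i$ of $A$ to vanish (otherwise a right main condition in column $\bar i$ would place $(i,j)$ on the arm $\cA$ and thus in $\Limb(A)$), so the auxiliary mirror-factors produce no contribution and the column-operation analysis above runs uniformly. Combining $U_{J(A)} \subseteq \Stab_U[A]$ with the upper bound from Corollary \ref{stabupbound} yields the asserted equality.
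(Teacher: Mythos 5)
Your proposal is correct and follows essentially the same route as the paper: reduce to the inclusion $U_{J(A)}\subseteq\Stab_U[A]$ via the order bound of Corollary \ref{stabupbound}, collapse each generator $x_{ij}(\alpha)$, $(i,j)\in J(A)$, to a single restricted column operation using Lemma \ref{core position invariant}, and observe that a nonzero entry of column $j$ above row $i$ would be a main condition placing $(i,j)$ on a leg, hence in $\Limb(A)$ --- a contradiction. Your write-up is merely more explicit than the paper's about verifying the hypothesis $\supp(A)\cap\trir=\rmc(A)$ and about the case $r\geq i$ being annihilated by the projection $\pi_{\pUPt}$.
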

\begin{proof} By \ref{stabupbound} it suffices to show $U_{J(A)}\subseteq \Stab_U[A]$. Let $(i,j)\in J(A)$ and $\alpha\in \F_q$ and let $[B]=[A].x_{ij}(\alpha)$.  Since $[A]$ is a verge, it is a core. By \ref{core position invariant} $[B]$ and $[A]$ differ at most in column $i$, and this happens only, if column $j$ of $A$ contains a nonzero entry above row $i$. Since $[A]$ is a verge, such a nonzero entry has to be a main condition, forcing $(i,j) \in \Limb(A)$, a contradiction. Thus $[B]=[A]$ and the assertion of the lemma follows.
\end{proof}

\begin{Lemma}\label{auxorderlemma}
Let $[A]$ be a core character and let $J(A)$ be defined as in lemma \ref{def of J(A)}. Then  $|U_{J(A)}| = |\Stab_U(A)|$.
\end{Lemma}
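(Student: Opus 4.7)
By Corollary \ref{stabupbound} we already have $|\Stab_U[A]|\le q^m$ where $m=|J(A)|$. Using the orbit--stabilizer identity $|\cO_A|\cdot|\Stab_U[A]|=|U|=q^{a+m}$, with $a=|\Limb(A)|=|\Pl(A)|$, the lemma becomes equivalent to the upper bound $|\cO_A|\le q^a$, which together with Corollary \ref{orbitsize} would yield $|\cO_A|=q^a$. The plan is therefore to prove
\[
\cO_A = \{[A].y : y\in U_L\},
\]
where $U_L$ denotes the set of products $y=\prod_{(i,j)\in\Limb(A)}x_{ij}(\alpha_{ij})$ taken in the fixed order of Lemma \ref{fLM} (reduced legs first, then arms). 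By Theorem \ref{Orbitdescription} one has $|U_L|=q^a$, so this identity gives exactly what is needed.

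Because $J(A)$ is closed (Lemma \ref{def of J(A)}) and $\pUP=\Limb(A)\cup J(A)$, Theorem \ref{anyorder} applied to a linear ordering of $\pUP$ in which $\Limb(A)$-positions precede $J(A)$-positions yields a unique factorisation $u=y u_J$ with $y\in U_L$ and $u_J\in U_{J(A)}$. It hence suffices to show that for every $y\in U_L$ and every $u_J\in U_{J(A)}$ the character $([A].y).u_J$ again lies in $\{[A].y':y'\in U_L\}$. I would proceed by induction on the number of root-subgroup generators in a reduced expression of $u_J$, the engine being the case of a single generator $x_{kl}(\alpha)$ with $(k,l)\in J(A)$.

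The inductive step rests on two structural consequences of $(k,l)\notin\Limb(A)$: column $l$ of $A$ contains no main condition above row $k$ (because $(k,l)\notin\cL$), and column $\bar k$ of $A$ vanishes, since $(k,l)\notin\cA$ forbids a right main condition at $(r,\bar k)$, and for the core character $[A]$ one has $\supp(A)\cap\trir=\rmc(A)$ (minor and supplementary entries lying in $\tril$), as in the proof of Lemma \ref{core position invariant}. Combined with the description of the restricted column operations in \ref{illTruncatedColumnOperationG}, this forces $[A].x_{kl}(\alpha)$ to differ from $[A]$ only at $\mathfrak{P}$-places in the sense of Definition \ref{PlaceDisting}, leaving every core position and every $\fM$-place untouched. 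The unique $y_{kl}(\alpha)\in U_L$ furnished by Theorem \ref{Orbitdescription} and matching these $\mathfrak{P}$-place values then has trivial arm coefficients, so Lemma \ref{fLMaux} guarantees that $[A].y_{kl}(\alpha)$ agrees with $[A]$ on core and on $\fM$; hence $[A].x_{kl}(\alpha)=[A].y_{kl}(\alpha)$, equivalently $x_{kl}(\alpha)\,y_{kl}(\alpha)^{-1}\in\Stab_U[A]$.

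The main obstacle is extending this single-generator analysis from $y=1$ to an arbitrary $y\in U_L$. For general $y$ one must verify that the two key vanishings (no main condition in column $l$ above row $k$; column $\bar k$ still zero) persist in $[A].y$. Using the explicit descriptions of leg and arm actions in Lemmas \ref{fLM}, \ref{fM}, and \ref{fM2}, one tracks how $y$ alters the affected columns and shows that the minor and supplementary entries introduced or modified by the arm factors of $y$ do not create the forbidden configurations. Once this is established the single-generator argument transports to $[A].y$ and produces $y'\in U_L$ with $([A].y).x_{kl}(\alpha)=[A].y'$; iterating through a reduced expression of $u_J$ completes the induction. Particular care is needed for type $\fB_n$ at $l=n+1$, where the quadratic correction $\tfrac12\alpha^2$ in \ref{illTruncatedColumnOperationG} must be tracked, and for the antidiagonal case $l=\bar k$ of type $\fC_n$; each is handled via the corresponding explicit formula from Lemma \ref{rootsubgroupsU}.
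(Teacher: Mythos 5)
Your proposal reformulates the lemma (via orbit--stabilizer) as the statement that the orbit has size at most $q^{|\Pl(A)|}$, i.e., that $\cO_A=\{[A].y : y\in U_L\}$, and then tries to prove closure of the right-hand side under the action of $U_{J(A)}$ by induction on generators. This is a genuinely different route from the paper, which instead constructs an explicit injection $\phi:U_{J(A)}\to\Stab_U[A]$, $u\mapsto uy_u$, where $y_u\in U_L$ is the unique element from Theorem~\ref{Orbitdescription} returning $[A].u$ to a core. The paper's key simplification is to treat $u\in U_{J(A)}$ all at once: it decomposes $A=\verge(A)+A_1$, uses Corollary~\ref{J in stab} to conclude $[\verge(A)].u=[\verge(A)]$, deduces that $[A].u$ differs from $[A]$ only in rows and to the left of minor/supplementary conditions, and then invokes Lemma~\ref{omit1move} and Lemma~\ref{core position invariant} to get $[A].uy_u=[A]$; injectivity comes from the unique factorisation in Theorem~\ref{anyorder}. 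This machinery avoids any generator-by-generator tracking.

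The gap in your plan is exactly where you flag it: the extension of the single-generator analysis from $y=1$ to arbitrary $y\in U_L$. Your two structural vanishings do hold for $[A]$ itself (for a core, $\supp(A)\cap\trir=\rmc(A)$ forces column $\bar k$ to vanish when $(k,l)\notin\cA$, and $(k,l)\notin\cL$ rules out a main condition above row $k$ in column $l$), and the argument that restricted leg moves over $\cL^\circ$ preserve the vanishing of column $\bar k$ is plausible because $\cL^\circ$ explicitly excludes leg positions sitting on arms, so the corresponding columns $\bar a$ are also zero for a core. But for a general $y\in U_L$ that includes arm factors, those factors modify precisely the columns $a$ and $\bar l$ of \ref{fM} and can populate columns in $\trir$; you then have to re-examine whether a subsequent restricted column operation by $x_{kl}(\alpha)$ still only touches $\mathfrak{P}$-places and leaves core and $\fM$ alone, and whether the resulting character is of the form $[A].y'$. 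You acknowledge this as ``the main obstacle'' and sketch an intention to ``track how $y$ alters the affected columns,'' but no such tracking is given; ``does not create the forbidden configurations'' is a claim, not an argument, and the special cases $l=n+1$ ($\fB_n$) and $l=\bar k$ ($\fC_n$) add further moving parts. Moreover, even if closure under $J(A)$-generators were established at each stage, you would need to verify that the intermediate character, before the re-factorisation into $[A].y'$, really is of that form, which cannot be deduced from Theorem~\ref{Orbitdescription} alone since that theorem only prescribes the entries at $\Pl(A)$, not the behavior at core positions under an arbitrary sequence of moves. So the proposal, as written, outlines an approach that could possibly be made to work but leaves its central inductive step unproved; the paper's verge-decomposition argument is the device that disposes of exactly this difficulty.
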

\begin{proof}
By \ref{stabupbound} we only need to show $|U_{J(A)}| \leq |\Stab_U(A)|$.
Let $u\in U_{J(A)}, [B]=[A].u$ and $[A_0]=\verge(A)$ and $A_1=A-A_0\in V$.  By \ref{J in stab}, we have $[A_0]=[A_0].u$, and hence $A_0=\pi(A_0 u^{-t})$ with $\pi = \pi_{\pUPt}$ as in \ref{UactsAuto}. Thus 
$B=A.u^{-t}=\pi(Au^{-t})=\pi((A_0+A_1)u^{-t})=A_0+A_1.u^{-t}$.
Since $[A]$ is a core character, we have $\main(A_1)\subseteq \minc(A)\cup \suppl(A)$, and hence $[B]$ can be different from $[A]$ only at positions in the row and to the left of minor or supplementary conditions.
 Therefore $[B]$ satisfies the assumption of corollary \ref{omit1move} and lemma \ref{core position invariant}. 
  By \ref{tocore} there exists $\lambda_{ij}\in \F_q$ for each  $(i,j)\in \Limb(A)$, such that $[B].y_u$ is a core character, and by \ref{Orbitdescription}, $y_u$ is uniquely determined, where 
$
y_u= \prod_{(i,j)\in \Limb(A) }x_{ij}(\lambda_{ij}),
$
the product is taken again in the order of $\Limb(A)$ defined in the previous section.
By  \ref{omit1move} 
\begin{equation}\label{Auy1}
[A].uy_u=[B].y_u\quad \text{with}\quad B=\sum_{(s,t)\in \core(B)}B_{st}e_{st}=\sum_{(s,t)\in \core(A)}B_{st}e_{st}.
\end{equation}
Moreover by \ref{core position invariant},  we have $[B]_{st}=([A].u)_{st}=[A]_{st}$ for all $(s,t)\in \core(A)$. Observing that $[A]$ is a core character,
 equation \ref{Auy1} implies $[A].uy_u=[A]$.
Thus $uy_u\in \Stab_{U}(A)$. Now we define a map:
\[
\phi: U_{J(A)}\rightarrow \Stab_{U}(A): u\mapsto u y_u.
\]
We prove that $\phi$ is injective, then we have shown 
$|U_{J(A)}|\leqslant |\Stab_U(A)|$. For $u, v\in U_{J(A)}$ we define the corresponding $y_u$ and $y_v$ as in \ref{tocore}. Assume $uy_u=vy_v$. Recall that $\pUP$ is the disjoint union $J(A)\dot\cup\Limb(A)$. Thus fixing  a linear ordering of $J(A)$, using the linear ordering of $\Limb(A)$ defined in the last section and defining the positions in $\Limb(A)$ to come after all elements of $J(A)$ defines a linear ordering of $\pUP$. The uniqueness part of \ref{anyorder} ensures  $u=v$. Therefore $\phi $ is injective as desired.
\end{proof}

\begin{Theorem}\label{classifybycore}
Let $[A]\in \hat V$ be staircase. Then for $\Lambda:\Pl(A)\longrightarrow \F_q: (i,j)\mapsto \lambda_{ij}$ there exists precisely one $[B]=[B(\Lambda)]\in \cO_A$ with $B_{ij}=\lambda_{ij}$ for all $(i,j)\in \Limb(A)$. Moreover
\[
\cO_{A}=\{[B(\Lambda)]\,|\,\Lambda:\Pl(A)\longrightarrow \F_q\}
\]
In particular, $\cO_A$ contains precisely one core character $[A_0]=[B(\Lambda_0)]$ with $\Lambda_0: \Pl(A)\longrightarrow \F_q: (i,j)\mapsto 0$ for all $(i,j)\in \Pl(A)$. As a consequence 
\[
|\cO_A|=q^{|\Pl(A)|}=q^{|\Limb(A)|}=q^{|\Limb(\verge(A))|}=[U:U_{J(A)}].
\]
\end{Theorem}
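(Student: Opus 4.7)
The plan is to combine the existence part of Theorem \ref{Orbitdescription} with an orbit--stabilizer count obtained from \ref{auxorderlemma} to pin down $|\cO_A|$ exactly, and then to observe that the parametrisation $\Lambda\mapsto[B(\Lambda)]$ is forced to be a bijection, whence both the uniqueness of $[B(\Lambda)]$ and the uniqueness of the core character drop out at once.

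First I would invoke Theorem \ref{Orbitdescription} to produce, for every $\Lambda:\Pl(A)\to\F_q$, a character $[B(\Lambda)]\in\cO_A$ with $B(\Lambda)_{kl}=\lambda_{kl}$ at every place position. Since any two distinct functions $\Lambda,\Lambda'$ must disagree at some $(k,l)\in\Pl(A)$, the associated characters differ at that position, hence are distinct. This already yields the lower bound $|\cO_A|\geq q^{|\Pl(A)|}$ and, in view of the bijection $\f$ of \ref{placelimb}, equivalently $|\cO_A|\geq q^{|\Limb(A)|}$.

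For the matching upper bound I would pass to a core character: by \ref{tocore} there exists $[A_0]\in\cO_A$ which is a core. Since $\mc$ is an orbit invariant, $\Limb(A_0)=\Limb(A)$ and $J(A_0)=J(A)$. Applying \ref{auxorderlemma} to $[A_0]$ gives $|\Stab_U[A_0]|=|U_{J(A)}|=q^{|J(A)|}$ (using \ref{anyorder}), so by orbit--stabilizer
\[
|\cO_A|=|\cO_{A_0}|=\frac{|U|}{|\Stab_U[A_0]|}=\frac{q^{|\pUPs|}}{q^{|J(A)|}}=q^{|\pUPs|-|J(A)|}=q^{|\Limb(A)|}=q^{|\Pl(A)|}.
\]

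Combining the two bounds forces equality, so the map $\Lambda\mapsto[B(\Lambda)]$ from $\F_q^{\Pl(A)}$ onto $\cO_A$ is a bijection; this simultaneously gives the uniqueness of $[B(\Lambda)]$, the equality $\cO_A=\{[B(\Lambda)]\,|\,\Lambda:\Pl(A)\to\F_q\}$ and, by specialising to $\Lambda_0\equiv 0$, the uniqueness of the core character in $\cO_A$. The final chain of equalities then reduces to the observations that $\mc(A)=\mc(\verge(A))$ (so that $\Limb(A)=\Limb(\verge(A))$) and that $[U:U_{J(A)}]=q^{|\pUPs|-|J(A)|}$ by \ref{anyorder}. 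The one step requiring genuine care is the passage to $[A_0]$ in the upper bound: one must notice that $J$ and $\Limb$ depend only on the orbit-invariant set $\mc(A)$ so that the count from \ref{auxorderlemma} applied at $[A_0]$ transports back to $[A]$; once this is observed, the whole argument is a one-line orbit--stabilizer calculation placed against the lower bound.
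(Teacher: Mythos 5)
Your proposal is correct and follows essentially the same route as the paper: lower bound $q^{|\Pl(A)|}\leq|\cO_A|$ from the construction in \ref{Orbitdescription}, the identification $|\Pl(A)|=|\Limb(A)|$ via \ref{placelimb}, and the matching count $|\cO_A|=[U:U_{J(A)}]$ from \ref{auxorderlemma} together with orbit--stabilizer. If anything, you are slightly more careful than the paper in explicitly passing to a core representative $[A_0]$ before invoking \ref{auxorderlemma} (which is stated only for core characters) and noting that $J$ and $\Limb$ are orbit invariants, a point the paper's own proof leaves implicit.
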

\begin{proof}
In \ref{fLM} and \ref{fM2} we constructed for each
 $\Lambda$ an element $[B]= [B(\Lambda)]\in \cO_A$. Thus $|\cO_A|\geqslant q^a$ with $a=|\Pl(A)|$ (comp \ref{orbitsize}). By \ref{placelimb} we have  $|\Pl(A)|=a=|\Limb(A)|$. 
  If $|J(A)|=m$, we have $|\pUP|=a+m$ and hence $q^{a}=[U:U_{J(A)}]$.
By \ref{auxorderlemma} we obtain 
$$q^a\leqslant |\cO_A|=[U:\Stab_U([A])]=[U:U_{J(A)}] =q^a,$$ and therefore we have equality. From this all claims of the theorem follow immediately.
\end{proof}

\section{Andr\'{e}-Neto supercharacters}\label{ANsuperchar}

In this last section we shall decompose the Andr\'{e}-Neto supercharacters of $U$ into characters afforded by orbit modules $\C\cO_A$, for certain $[A]\in \hat V$.  First we briefly describe the  Andr\'{e}-Neto elementary characters and relate those to our set up. Recall that $\tilde n=n+1$ for type $\fB_n$ and  $\tilde n=n$ otherwise.

\begin{Defn}\label{ANelemsubgps}
For $(i,j)\in\pUP$ we define  (compare [\cite{andreneto1}, p. 398]).
\begin{equation}\label{ANelsubsets}
\varrho_{i,j} =\begin{cases} \{(i,k)\in\pUP\,|\,i<k<j\,\} & \text{if}\quad (i,j)\in\tril\\
                                  \{(i,k)\in\pUP\,|\,i<k\leq n\,\}\cup\{(\bar j,l)\in\pUP\,|\,\bar j <l\leq n\,\} &   \text{if}\quad (i,j)\in\trir\\
\end{cases}.
\end{equation}

Note that the second case includes $j=\bar i$ for type $\mathfrak{C}_n$, where $\varrho_{i,\bar i} =  \{(i,k)\in\pUP\,|\,i<k\leq n\,\}$.

We set $J_{i,j} = \pUP\setminus \varrho_{i,j}$ and note that $J_{i,j}$ is closed in $\pUP$ as well as $J^\circ_{i,j} = J_{i,j}\setminus (i,j)$. Thus  $J_{i,j}$ arises by removing from $\pUP$ parts of row $i$ respectively of rows $i$ and $\bar j$.  We denote the pattern subgroups $U_{J_{i,j}}$ and $U^\circ_{J_{i,j}}$ by $U_{i,j}$ and $U^\circ_{i,j}$ respectively for short. Moreover it is not hard to check that $U^\circ_{i,j}$ is a normal subgroup of $U_{i,j}$, (see [\cite{andreneto1}, p. 399]).\hfill$\square$
\end{Defn}

Recall that we fixed a non trivial linear character $\theta:(\F_q,+)\rightarrow\C^{*}$. For $\alpha\in\F_q$ define 
$$
\theta_\alpha:(\F_q,+)\rightarrow \C^*:\lambda\mapsto\theta(\alpha\lambda).
$$
Then  $\{\theta_\alpha\,|\,\alpha\in\F_q\,\}$ is the set of all linear characters of the additive group of $\F_q$.

\begin{Lemma}\label{elementarychar}(see [\cite{andreneto1}, p.399]) Let $(i,j)\in\pUP$ and $0\not =\alpha\in\F_q$. Then $\chi^{i,j}_\alpha:U_{i,j}\rightarrow\C^*: u\mapsto \theta_\alpha(u_{ij}) = \theta(\alpha u_{ij})$ defines a nontrivial linear character of $U_{i,j}$. \hfill$\square$
\end{Lemma}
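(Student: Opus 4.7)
Nontriviality is immediate: by lemma \ref{rootsubgroupsU} the root subgroup element $x_{ij}(\beta)$ satisfies $(x_{ij}(\beta))_{ij}=\beta$, so $\chi^{i,j}_\alpha(x_{ij}(\beta))=\theta(\alpha\beta)$, which differs from $1$ for suitable $\beta\in\F_q$ since $\alpha\neq 0$ and $\theta$ is a nontrivial character of $(\F_q,+)$. The substantive task is to show that $u\mapsto u_{ij}$ is a group homomorphism $U_{i,j}\to(\F_q,+)$; composition with $\theta_\alpha$ then yields the linear character $\chi^{i,j}_\alpha$. Since $u,v\in U_{i,j}\subseteq U_N(q)$ are unitriangular, matrix multiplication gives
\[
(uv)_{ij}=u_{ij}+v_{ij}+\sum_{i<k<j}u_{ik}v_{kj},
\]
so additivity reduces to the vanishing of this cross-sum for all $u,v\in U_{i,j}$.

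The main tool is the \emph{pattern subgroup principle}: for any closed $J\subseteq\pUP$ and $u\in U_J$, the entry $u_{rs}$ vanishes whenever $(r,s)\in\pUP\setminus J$. I plan to establish this by induction on the length of a product decomposition of $u$. Inspection of lemma \ref{rootsubgroupsU} shows that each generator $x_{rs}(\beta)$ with $(r,s)\in J$ has $\pUP$-support concentrated on $(r,s)$: the mirrored term $\pm\beta e_{\bar s,\bar r}$ lies in $\RP$ outside $\pUP$, and the type-$\fB_n$ quadratic correction $-\tfrac12\beta^2 e_{r,\bar r}$ sits in $\CC$, which is not contained in $\pUP=\UP$ for type $\fB_n$. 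The closure conditions of lemma \ref{Upattern}(i),(ii) govern the matrix products $e_{rs}e_{r's'}=\delta_{s,r'}e_{rs'}$ and their reflected analogues, forcing the $\pUP$-support of any product to stay inside $J$. Specialising to $J=J_{i,j}$ yields $u_{rs}=0$ for every $u\in U_{i,j}$ and every $(r,s)\in\varrho_{i,j}$.

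It remains to analyse the cross-sum case by case. If $(i,j)\in\tril$, every $(i,k)$ with $i<k<j$ lies in $\varrho_{i,j}$ by definition, so $u_{ik}=0$ and each term vanishes. If $(i,j)\in\trir$, I split the sum at $k=n$. For $i<k\leq n$ one has $(i,k)\in\varrho_{i,j}$ and so again $u_{ik}=0$. For $n<k<j$ an arithmetic check gives $k+j>N$, hence $(k,j)\notin\pUP$, and lemma \ref{lemmaurs} expresses
\[
v_{kj}=-\varepsilon_{kj}v_{\bar j,\bar k}-\sum_{k<l<j}\varepsilon_{lj}v_{kl}v_{\bar j,\bar l}.
\]
Both $(\bar j,\bar k)$ and each $(\bar j,\bar l)$ belong to $\{(\bar j,m)\in\pUP:\bar j<m\leq n\}\subseteq\varrho_{i,j}$, since $\bar k,\bar l\leq n$ follows from $k,l\geq n+1$, so the corresponding $v$-entries vanish by the pattern subgroup principle; a secondary induction on $j-k$ reduces the remaining $\RPC$-factors $v_{kl}$ to combinations of $\pUP$-entries already controlled. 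The main obstacle lies in this last step: the recursive use of lemma \ref{lemmaurs} demands careful bookkeeping to guarantee that every $\pUP$-entry produced along the way lands inside $\varrho_{i,j}$, with particular delicacy near the middle index $n+1$ in type $\fB_n$, where the quadratic-term generators $x_{i,n+1}(\alpha)$ can complicate the support analysis.
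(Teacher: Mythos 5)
Your two-step strategy --- a support principle for pattern subgroups ($u\in U_J$ implies $u_{rs}=0$ for $(r,s)\in\pUP\setminus J$) combined with lemma \ref{lemmaurs} to kill the cross-sum $\sum_{i<k<j}u_{ik}v_{kj}$ in $(uv)_{ij}=u_{ij}+v_{ij}+\sum_{i<k<j}u_{ik}v_{kj}$ --- is the right one; the paper gives no proof and simply cites [\cite{andreneto1}]. The argument is complete for $(i,j)\in\tril$, for type $\fC_n$ (including $j=\bar\imath$) and for type $\fD_n$. But the step you yourself flag as delicate genuinely fails in type $\fB_n$. For $(i,j)\in\trir$ the index $k=n+1$ does occur (since $j>\tilde n=n+1$), and there your claim that ``$\bar k\leq n$ follows from $k\geq n+1$'' is false: $\overline{n+1}=n+1$. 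Lemma \ref{lemmaurs} then gives only $v_{n+1,j}=-v_{\bar\jmath,\,n+1}$ (the sum does vanish), and with $\varrho_{i,j}$ as printed in \ref{ANelsubsets} both $(i,n+1)$ and $(\bar\jmath,n+1)$ lie in $J_{i,j}$, so the surviving term $u_{i,n+1}v_{n+1,j}=-u_{i,n+1}v_{\bar\jmath,\,n+1}$ need not vanish. This is not a repairable detail of your proof but an actual failure of the statement for that reading of \ref{ANelsubsets}: in type $\fB_2$ ($N=5$) take $(i,j)=(1,4)$, so $\varrho_{1,4}=\{(1,2)\}$ and $J_{1,4}=\{(1,3),(1,4),(2,3)\}$; for $u=x_{13}(1)$ and $v=x_{23}(1)$ one computes $(uv)_{14}=-1$ while $u_{14}+v_{14}=0$, so $u\mapsto\theta(\alpha u_{14})$ is not a homomorphism on this $U_{1,4}$.

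The resolution is that in type $\fB_n$ the set $\varrho_{i,j}$ for $(i,j)\in\trir$ must also contain one of the middle-column positions $(i,n+1)$, $(\bar\jmath,n+1)$ (as in Andr\'e--Neto's original definition); this is exactly what the degree count $|\varrho_{i,j}|=j-i-1$ in \ref{eldegree} forces, since the printed set has only $j-i-2$ elements there. With that correction the offending product vanishes and your case analysis closes; note also that your ``secondary induction on $j-k$'' is unnecessary, because every summand in the recursion of lemma \ref{lemmaurs} already carries a factor $v_{\bar\jmath\,\bar l}$ with $(\bar\jmath,\bar l)\in\varrho_{i,j}$. Finally, the support principle you invoke does need more than you write: in the induction one must check that cross-terms coming from entries of $u$ at positions in $\RPC$ (which are \emph{not} controlled by the inductive hypothesis) can never land on a position of $\pUP$; this is where the inequalities $\bar c\leq r$ versus $r<\bar c$ enter, and only after that reduction do conditions (i) and (ii) of \ref{Upattern} finish the argument.
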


The induced character $\xi^{i,j}_\alpha = \Ind_{U_{i,j}}^U(\chi^{i,j}_\alpha)$ is called  {\bf elementary character} associated with $(i,j)\in\pUP$ and $\alpha\in\F^*_q$ in [\cite{andreneto1}].  From \ref{ANelsubsets} one sees by direct inspection that 
\begin{equation}\label{eldegree}
\deg(\xi^{i,j}_\alpha) = q^{|\varrho_{i,j}|} = [U:U_{i,j}] = 
\begin{cases}q^{j-i-1} & \text{if}\quad (i,j)\in\UP\\
                    q^{n-i} & \text{if} \quad j = \bar i.
\end{cases}
\end{equation}

Let $[A]\in\hat V$ be a core with $\mc(A) = \{(i,j)\}$ and $A_{ij} = \alpha$. Thus by \ref{coredef} $\verge(A) = \alpha e_{ij}$ and in particular $[A]$ is a staircase core. Obviously $A=\alpha e_{ij}$ if $j\leq \tilde n$ or $U$ is of type $\mathfrak{C}_n$ and $A=\alpha e_{ij} + \beta e_{i\bar j}$ for some $\beta \in \F_q$ if $n < j$ and $U$ is of type $\mathfrak{B}_n$ or $\mathfrak{D}_n$. In this case $(i,\bar j)$ is the unique minor condition and there are no supplementary conditions. Using \ref{Orbitdescription} and \ref{defofplace} we can describe the orbit $\cO_A$ by filling the positions in  
\begin{equation}\label{placesElOrb}
\Pl(\cO_A) = 
\begin{cases} \varrho_{i,j} = \{(i,k)\in\pUP\,|\,i<k<j\,\} & \text{if}\quad (i,j)\in\tril\\
                      \{(i,k)\in\pUP\,|\,i<k<j\,\}\setminus\{(i,\bar j)\} & \text{if}\quad (i,j)\in\trir,
\end{cases}
\end{equation}
by arbitrary elements of $\F_q$. Note that all of  these positions are in row $i$. In particular 
 \begin{equation}\label{elOrbSize}
 |\cO_A| = 
 \begin{cases} 
                    q^{j-i-1} = \deg(\xi^{i,j}_\alpha) & \text{if} \quad (i,j)\in\tril\\\
                    q^{j-i-2} = q^{-1}\deg(\xi^{i,j}_\alpha) & \text{for types $\mathfrak{B}_n,\mathfrak{D}_n$, if} \quad (i,j)\in\trir\\
                    q^{j-i-1} = \deg(\xi^{i,j}_\alpha) & \text{for type $\mathfrak{C}_n$ if}\quad (i,\bar i)\not=(i,j)\in\trir\\
                    q^{2(n-i)} = (\deg(\xi^{i,j}_\alpha))^2 & \text{for type $\mathfrak{C}_n$ if} \quad j = \bar i.
                    
\end{cases}
\end{equation}
                    
We write $\cO_A^{i,j} = \{[A].u\,|\,u\in U_{i.j}\,\}$.  Note that $\supp(B)\subseteq \row i$ and $B_{ij}=A_{ij}=\alpha$ for all $[B]\in\cO_A$.  

\begin{Lemma}\label{ijorbits}
Let $[A]\in \hat V$ as above. Then
$$ 
\cO_A^{i,j} = 
\begin{cases}
                    \{[B]\in\cO_A\,|\,B_{ij} = \alpha,  \supp(B)\subseteq \varrho_{i,j}\cup \{(i,j)\}\,\} = \cO_A  & \text{if}\quad (i,j)\in\tril\\
                     \{[B]\in\cO_A\,|\,B_{ik}=0\,\,\text{for}\,\, n<k<j\,\} & \text{if}\quad (i,\bar i) \not = (i,j)\in\trir\\
                     \{[B]\in\cO_A\,|\,B_{i\bar i}=\alpha,\, \supp(B) \subseteq\varrho_{i,j}\cup\{(i,\bar i)\}\,\} &  \text{if} \quad j = \bar i \,\,\text{in case of type $\mathfrak{C}_n $}.
\end{cases}
$$
Therefore in all cases $\supp(B)\cap J_{i,j} = \{(i,j)\}$  and in addition $B_{ij}= \alpha$ for all $[B]\in\cO^{i,j}_A$. In particular 
$$
[B]u =  \chi^{i,j}_\alpha (u)[B].u\quad\text{for all $[B]\in\cO^{i,j}_A$ and all}\, u\in U_{i,j}.
$$
\end{Lemma}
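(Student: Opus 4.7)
Plan. My approach combines the orbit parameterization in Theorem~\ref{Orbitdescription} with the combinatorics of the pattern subgroup $U_{i,j}$. First I would unwind the combinatorial data in each case: since $\mc(A)=\{(i,j)\}$ and $[A]$ is a staircase core, the supplementary conditions vanish, so $\core(A)=\{(i,j)\}$ in Cases $(i,j)\in\tril$ and $j=\bar i$ (type $\fC_n$), whereas $\core(A)=\{(i,j),(i,\bar j)\}$ for the type $\fB_n,\fD_n$ version of Case $(i,j)\in\trir$. Reading \ref{lowerhook}, this forces $\Limb(A)=\cL(i,j)$ sitting in column $j$ for Case $\tril$, $\Limb(A)=\cL(i,j)\dot\cup\cA(i,j)$ with the arm living in row $\bar j$ for Case $\trir$ non-antidiagonal, and $\Limb(A)=\cA(i,\bar i)$ in row $i$ for the $\fC_n$-antidiagonal case; the bijection $\f:\Limb(A)\to\Pl(\cO_A)$ from \ref{placelimb} sends $\cL\to\mathfrak p$ and $\cA\to\fm$.

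The central combinatorial step is to intersect $\Limb(A)$ with $\varrho_{i,j}$ and with $J_{i,j}$ using \ref{ANelsubsets}. Legs $\cL(i,j)$ sit in column $j$ and are disjoint from $\varrho_{i,j}$, hence entirely contained in $J_{i,j}$, while the arm $\cA$ splits as $(\cA\cap J_{i,j})\dot\cup(\cA\cap\varrho_{i,j})$. Under $\f$, the $\varrho$-part corresponds exactly to the ``frozen'' places $(i,k)\in\fm(i,j)$ with $n<k<j$ (respectively $n<k<\bar i$ in the $\fC_n$-antidiagonal case), while the $J_{i,j}$-part gives the ``modifiable'' places on which $U_{i,j}$ can adjust values freely. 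Realizability is then immediate from Theorem~\ref{Orbitdescription}: for any choice of $\alpha_{kl}\in\F_q$ indexed by the modifiable Limb positions, the product $u=\prod_{(k,l)\in\Limb(A)\cap J_{i,j}}x_{kl}(\alpha_{kl})$ taken in the Section~4 order lies in $U_{i,j}$, and $[A].u\in\cO_A^{i,j}$ realises the prescribed modifiable-place values with $0$ at the frozen ones; this gives the ``$\supseteq$'' half of each case of the orbit identity.

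For the reverse inclusion I would factor an arbitrary $u\in U_{i,j}$ via Theorem~\ref{anyorder} into $u=u_1u_2$ using the ordering on $J_{i,j}$ that lists $\Limb(A)\cap J_{i,j}$ first, so that $u_2$ is a product over $J(A)\cap J_{i,j}$. The key subclaim is $[A].u_2=[A]$: when $[A]$ is a verge (Cases~1 and~3, and the degenerate subcase $\beta=0$ of Case~2) this is immediate from Corollary~\ref{J in stab}; for the non-verge Case~2 with $A=\alpha e_{ij}+\beta e_{i\bar j}$ in types $\fB_n,\fD_n$, I would verify stabilisation generator by generator using the restricted column operations of \ref{illTruncatedColumnOperationG}, checking that for every $(k,l)\in J(A)\cap J_{i,j}$ the action of $X_{kl}$ perturbs neither column $j$ (carrying $\alpha$) nor column $\bar j$ (carrying $\beta$) at any position in $\pUP$; the exclusions $(k,l)\notin\cL(i,j)\cup\cA(i,j)\cup\varrho_{i,j}$ are precisely what rules out every ``dangerous'' configuration in which a multiple of $\alpha$ or $\beta$ could propagate into such a slot. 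I expect this generator-by-generator check to be the main technical obstacle, because the two-term $A$ creates genuine cross-effects between columns $j$ and $\bar j$ that Corollary~\ref{J in stab} does not address.

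Finally, the support statement and scalar formula fall out quickly. From the above we have $B_{ij}=\alpha$ (the main-condition value is preserved throughout $\cO_A$) and $B_{kl}=0$ for every frozen $(k,l)$, so $\supp(B)\cap J_{i,j}=\{(i,j)\}$. For the concluding multiplication formula I invoke Theorem~\ref{monomial}: $[B]u=\theta(\kappa(-B,\pi(u^{-1})))\,[B].u$, and the support constraint collapses the pairing $\kappa(-B,\pi(u^{-1}))$ to the single term $-B_{ij}\,\pi(u^{-1})_{ij}$. Since $u\mapsto u_{ij}$ is additive on $U_{i,j}$ (which is exactly what makes $\chi^{i,j}_\alpha$ a linear character of $U_{i,j}$ in \ref{elementarychar}), one has $\pi(u^{-1})_{ij}=-u_{ij}$, so the scalar equals $\theta(\alpha u_{ij})=\chi^{i,j}_\alpha(u)$, as required.
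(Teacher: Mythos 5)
Your combinatorial identification of $\core(A)$, $\Limb(A)$, and of which places are ``frozen'' versus ``modifiable'' under $U_{i,j}$ is correct and is essentially what the paper's terse proof means by ``inspecting \ref{fLM} and \ref{fM2}''; likewise your derivation of the scalar $\chi^{i,j}_\alpha(u)$ from \ref{monomial} and \ref{traceform} via $\supp(B)\cap J_{i,j}=\{(i,j)\}$ is exactly the paper's computation. The ``$\supseteq$'' inclusion via products over $\Limb(A)\cap J_{i,j}$ is also sound.

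The gap is in your ``$\subseteq$'' direction, specifically the subclaim $[A].u_2=[A]$ for $u_2$ a product over $J(A)\cap J_{i,j}$. This is false in Case 2 (types $\fB_n,\fD_n$, $(i,j)\in\trir$) whenever the minor-condition entry $\beta=A_{i\bar j}$ is nonzero --- and that is precisely the case the paper needs, since \ref{identificationANorbits} applies \ref{ElHoms}, hence this lemma, to $A_\beta$ for every $\beta\in\F_q$. Concretely, for $i<k<\bar j$ the position $(k,\bar j)$ lies in $J(A)\cap J_{i,j}$: it is not on the leg $\cL(i,j)$ (which sits in column $j$), not on the arm $\cA(i,j)$ (which sits in row $\bar j$), and not in $\varrho_{i,j}$ (which meets only rows $i$ and $\bar j$). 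Yet by \ref{fillbyleftlegs} the element $x_{k\bar j}(\gamma)$ sends $A=\alpha e_{ij}+\beta e_{i\bar j}$ to a character with entry $-\gamma\beta$ at the modifiable place $(i,k)$, so it does \emph{not} stabilize $[A]$. Your proposed generator-by-generator criterion --- that $X_{kl}$ perturbs neither column $j$ nor column $\bar j$ --- does not detect this, because the perturbation occurs in column $k$; stabilizing $[A]$ would require preserving every entry of row $i$, not just the two carrying $\alpha$ and $\beta$. (There is also a composition-order slip: with $u=u_1u_2$ and $u_1$ listed first, $[A].u=([A].u_1).u_2$, so even a correct $[A].u_2=[A]$ would not close the argument.) What is true, and what you should prove instead, is that the described set $D$ is stable under every generator $X_{kl}$ with $(k,l)\in J_{i,j}$: a restricted column operation can create a nonzero entry at a frozen place $(i,c)$ with $n<c<j$ only through the arm generators $x_{\bar j\,\bar c}$ with $\bar c\leq n$, and these positions lie in $\varrho_{i,j}$ and are therefore excluded from $U_{i,j}$. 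Since $[A]\in D$ and $D\subseteq\cO_A^{i,j}$ by your surjectivity argument, this closure statement yields $\cO_A^{i,j}=D$ with no stabilizer claim needed.
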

\begin{proof}
Inspecting \ref{fLM} and \ref{fM2} one checks all but the last assertions of the lemma immediately. By \ref{monomial} we have  $[B]u = \theta(\kappa(B,u))[B].u$ and  $\theta(\kappa(B,u)) = \theta(B_{ij}u_{ij}) = \theta(\alpha u_{ij}) = \theta_\alpha(u_{ij}) = \chi^{i,j}_\alpha (u)$ by \ref{traceform}, since  $\{(i,j)\}\subseteq\supp(B)\cap \supp(u) \subseteq \supp(B)\cap J_{i,j} = \{(i,j)\}$.
\end{proof}

Fix $(i,j)\in\pUP$ and $\alpha\in\F_q^*$ and let $f\in\C U_{i,j}$ be the central idempotent such that $\C f$ affords the linear character $\chi = \chi^{i,j}_\alpha$. Then $M = \Ind_{U_{i,j}}^U(\C f)$ affords $\xi = \xi^{i,j}_\alpha$.

\begin{Lemma}\label{ElHoms} Keep the notation introduced above and let
 $$
 x = \sum_{B\in\cO_{i,j}}[B]\in \C\cO^{i,j}_A \subseteq \C\cO_A.
 $$
 Then $\C x$ is a one dimensional submodule of $\RRes^U_{U_{ij}}(\C\cO_A)$ isomorphic to $\C f$. As a consequence there is a nontrivial $U$-homomorphism from $M$ to $\C\cO_A$.
\end{Lemma}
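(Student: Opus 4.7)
The plan is to verify directly that $U_{i,j}$ acts on the line $\C x$ via the character $\chi = \chi^{i,j}_\alpha$, and then invoke Frobenius reciprocity for the second assertion.

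First I would establish that $U_{i,j}$ permutes the set $\cO^{i,j}_A$. This is immediate from the definition $\cO^{i,j}_A = \{[A].u \mid u\in U_{i,j}\}$, since the dot action of $U$ on $\hat V$ restricts to a right action of $U_{i,j}$, and $\cO^{i,j}_A$ is precisely one of its orbits. Consequently, for any $u\in U_{i,j}$, the map $[B]\mapsto [B].u$ is a bijection on $\cO^{i,j}_A$.

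Next I would compute $xu$ for $u\in U_{i,j}$ using the last assertion of Lemma \ref{ijorbits}:
\begin{equation*}
xu \;=\; \sum_{[B]\in\cO^{i,j}_A}[B]u \;=\; \sum_{[B]\in\cO^{i,j}_A}\chi^{i,j}_\alpha(u)\,[B].u \;=\; \chi(u)\sum_{[B]\in\cO^{i,j}_A}[B].u \;=\; \chi(u)\,x,
\end{equation*}
where the last equality uses the permutation property established above. Since the elements $[B]\in\cO^{i,j}_A$ are distinct elements of the monomial basis of $\C U$ (via the identification of Theorem \ref{monomial}), $x\neq 0$, so $\C x$ is a one-dimensional $\C U_{i,j}$-submodule of $\RRes^U_{U_{i,j}}(\C\cO_A)$ on which $U_{i,j}$ acts by $\chi$. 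Thus $\C x\cong \C f$ as $\C U_{i,j}$-modules.

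For the final consequence, I would apply Frobenius reciprocity: the inclusion $\C x\hookrightarrow \RRes^U_{U_{i,j}}(\C\cO_A)$ gives a nonzero element of $\Hom_{\C U_{i,j}}(\C f,\RRes^U_{U_{i,j}}(\C\cO_A))\cong \Hom_{\C U}(M,\C\cO_A)$, yielding the nontrivial $U$-homomorphism $M\to \C\cO_A$. The only delicate point in the argument is the coefficient computation in the first display, but this is handed to us by Lemma \ref{ijorbits}; there is no real obstacle, as the genuine work sits in that lemma and in the support description preceding it.
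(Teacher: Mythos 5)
Your proof is correct and follows the same route as the paper: compute $xu=\chi(u)x$ via Lemma \ref{ijorbits} and the fact that $U_{i,j}$ permutes $\cO^{i,j}_A$, then apply Frobenius reciprocity. You add the (welcome) explicit remarks that the dot action permutes the orbit and that $x\ne 0$, but the substance is identical.
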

\begin{proof}
Applying lemma  \ref{ijorbits} we obtain for any $u\in U_{i,j}$:
$$
xu =  \sum_{B\in\cO_{i,j}}[B]u =  \chi(u)\sum_{B\in\cO_{i,j}}[B].u =  \chi(u) u
$$
As a consequence, using Frobenius reciprocity, we get:
$$
(0) \not= \Hom_{\C U_{i,j}}(\C f,\RRes^U_{U_{i,j}}(\C\cO_A)) \cong \Hom_{\C U}(M,\C\cO_A),
$$
proving the lemma.
\end{proof}

If $U$ is of type $\mathfrak{B}_n$ or $\mathfrak{D}_n$ and if $(i,j)\in\trir$ and $\beta\in \F_q$ we want to distinguish cores $[A]$ with entry $\beta$ at the minor condition. Thus we write $A_\beta = \beta e_{i\bar j} + \alpha e_{ij}\in V$, if $\tilde n<j<\bar i$ and $A = A_0 = \alpha e_{ij}\in V$ for all types and all $j$. In particular $A_\beta$ is a core with $A_0 = A = \verge(A_\beta)$ for all $\beta\in\F_q$. 

\begin{Lemma}\label{mincond} Suppose $U$ is of type $\mathfrak{B}_n$ or $\mathfrak{D}_n$ and let $(i,j)\in\trir$. Then the distinct orbit modules $\C\cO_{A_\beta}, \beta \in \F_q$ have no irreducible constituent in common and hence afford orthogonal  characters.
\end{Lemma}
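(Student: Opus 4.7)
The plan is to exhibit an abelian subgroup $H\le U$ on which every orbit module $\C\cO_{A_\beta}$ restricts to be isotypic of a character $\theta_\beta$ depending injectively on $\beta$. Once this is in hand the conclusion is routine: for $\beta\ne\beta'$ the characters $\theta_\beta$ and $\theta_{\beta'}$ of $H$ are distinct, so the two restrictions to $H$ are isotypic of disjoint characters and
\[
 \Hom_U(\C\cO_{A_\beta},\C\cO_{A_{\beta'}})\hookrightarrow\Hom_H(\C\cO_{A_\beta},\C\cO_{A_{\beta'}})=0.
\]
By semisimplicity of $\C U$ this is equivalent to the orbit modules having no common irreducible constituent, hence to their characters being orthogonal.

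The natural candidate is $H=X_{i,\bar j}$. Since $(i,j)\in\trir$ is the unique main condition of $A_\beta$ with associated minor condition $(i,\bar j)$, and no supplementary conditions can arise because main and minor both lie in row $i$, one obtains $\core(A_\beta)=\{(i,j),(i,\bar j)\}$ and $\Pl(A_\beta)=\{(i,k)\mid i<k<j,\;k\ne\bar j\}$. Theorem \ref{classifybycore} then forces every $[B]\in\cO_{A_\beta}$ to be supported in row $i$ strictly between columns $i$ and $j+1$, with the fixed core values $B_{ij}=\alpha$ and $B_{i\bar j}=\beta$, and with column $\bar i$ of $B$ identically zero since $\bar i>j$.

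Next I would compute the action of $x_{i,\bar j}(\gamma)$ on such a $[B]$. Because $(i,j)\in\trir$ forces $\bar j\le n$ and $\bar j\ne n+1$, lemma \ref{rootsubgroupsU} yields $x_{i,\bar j}(\gamma)=1+\gamma e_{i,\bar j}-\gamma e_{j,\bar i}$, and the position $(j,\bar i)$ lies outside $\pUP$ (one would need $j<i$), so $\pi(x_{i,\bar j}(-\gamma))=-\gamma e_{i,\bar j}$; hence the monomial scalar becomes $\theta(-\kappa(B,\pi(x_{i,\bar j}(-\gamma))))=\theta(\gamma\beta)$. On the other hand
\[
 B\,x_{i,\bar j}(-\gamma)^{t}=B-\gamma Be_{\bar j,i}+\gamma Be_{\bar i,j}=B-\gamma\beta e_{i,i},
\]
since column $\bar j$ of $B$ contributes only $\beta e_{i,i}$ (killed by $\pi$) while column $\bar i$ of $B$ vanishes. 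Plugging into theorem \ref{monomial} gives $[B]\,x_{i,\bar j}(\gamma)=\theta(\gamma\beta)[B]=\theta_\beta(\gamma)[B]$, which is the required scalar action; since $\beta\mapsto\theta_\beta$ is injective, the concluding step of the strategy applies and the lemma follows. The main obstacle is the book-keeping in this last computation: one needs the tight description of $\supp(B)$ from theorem \ref{classifybycore} and has to keep straight which columns of $B$ appear and which terms survive $\pi$; after that it is a direct application of the monomial action formula.
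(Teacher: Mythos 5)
Your strategy coincides with the paper's own proof: both restrict to $H=X_{i\bar j}$, note that $H$ fixes every basis character of $\C\cO_{A_\beta}$, and try to separate the orbit modules by the resulting linear characters of $H$. The permutation part of your computation is fine, and so is the scalar formula: $x_{i\bar j}(\gamma)$ acts on $[B]$ by $\theta(\gamma B_{i\bar j})$.

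The gap is the assertion that $B_{i\bar j}=\beta$ for \emph{every} $[B]\in\cO_{A_\beta}$, which you derive from Theorem \ref{classifybycore}. That theorem parametrizes the orbit by the values at the places and produces a unique core character; it does not say that the entry at the minor condition is constant along the orbit, and in fact it is not once $\fm(i,j)\neq\emptyset$ --- see the term $-\beta B_{r\bar a}$ in Lemma \ref{fM}(1), which alters the minor entry whenever some place in $\fm(i,j)$ carries a nonzero value. Explicitly, in type $\fD_3$ with $(i,j)=(1,5)$, so $\bar j=2$ and $A_\beta=\alpha e_{15}+\beta e_{12}$, acting by $x_{24}(\delta)$ and then $x_{23}(\gamma)$ gives $[C]\in\cO_{A_\beta}$ with
\[
C=\alpha e_{15}+(\beta-\gamma\delta\alpha)e_{12}+\delta\alpha e_{13}+\gamma\alpha e_{14},
\]
and $x_{12}(t)$ acts on $[C]$ by the scalar $\theta(t(\beta-\gamma\delta\alpha))$, not by $\theta_\beta(t)$. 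Hence the restriction of $\C\cO_{A_\beta}$ to $X_{i\bar j}$ is not $\theta_\beta$-isotypic, and the disjointness argument collapses: every $\theta_v$ occurs in the restrictions of both $\C\cO_{A_\beta}$ and $\C\cO_{A_{\beta'}}$. (The paper's printed proof makes the same claim and shares this problem.) The lemma is salvageable along your lines: replace $X_{i\bar j}$ by the subgroup $H$ generated by all $X_{ik}$ with $\bar j\leq k<j$. Every $[B]\in\cO_{A_\beta}$ is an $H$-eigenvector whose eigencharacter records the tuple $(B_{ik})_{\bar j\leq k<j}$, and $B_{i\bar j}$ plus an explicit quadratic expression in the entries $B_{ik}$ with $(i,k)\in\fm(i,j)$ is constant equal to $\beta$ on the orbit (in the example above, $B_{12}+B_{13}B_{14}/\alpha=\beta$). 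These tuples are therefore disjoint for distinct $\beta$, and your concluding Hom-space argument then goes through with $H$ in place of $X_{i\bar j}$.
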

\begin{proof} Inspecting \ref{4action} we see that the $X_{i\bar j}$ is contained in the stabilizer $\Stab_U[B]$ for all $[B]\in\cO_{A_\beta}$ and acts on $[B]$ by the linear character $\theta_\beta$ (see \ref{monomial} and \ref{rootelmaction}). Therefore, if $\beta,\gamma\in\F_q$ with $\beta\not=\gamma$, the orbit modules $\C\cO_{A_\beta}$ and $\C\cO_{A_\gamma}$ cannot have an irreducible constituent in common and hence afford orthogonal characters.
\end{proof}

\begin{Defn}\label{vergemodule} For any staircase character $[A]\in\hat V$ we let $\cV (A)$ be the set of characters $[B]\in\hat V$ satisfying $\verge(B)=\verge(A)$ and let $\C\cV(A)$ be the $\C U$-module with monomial basis $\cV (A)$. Thus
$$
\C\cV(A) = \bigoplus_{\verge(\cO_B) = \verge(A)}\C\cO_B
$$
It is an immediate consequence of theorem \ref{monomial} and remark \ref{compTypeA}, that $\cV(A) = \tilde\cO_A = \{[A].u\,|\,u\in\widetilde U\}$ is an Andr\'{e}-Yan orbit yielding a supercharacter of $\widetilde U$. 
\hfill$\square$
\end{Defn}  

We can now describe the elementary Andr\'{e}-Neto characters in terms of our orbit modules:

\begin{Theorem}\label{identificationANorbits} Let $(i,j)\in \pUP$, $0\not=\alpha\in\F_q$ and let $A=\alpha e_{ij}\in V$. As above let $M$ be a $\C U$-module affording the elementary character $\xi^{i,j}_\alpha$. 
\begin{itemize} 
\item[1)] If $(i,j)\in\tril$, or if $U$ is of type $\mathfrak{C}_n$ and $(i,j)\in \UP$, then $M$ is irreducible and $M\cong \C\cO_A$.
\item[2)] If $U$ is of type $\mathfrak{B}_n$ or $\mathfrak{D}_n$ and $(i,j)\in\trir$, then  
$$
M\cong \bigoplus_{\beta\in\F_q} \C\cO_{A_\beta} = \bigoplus_{[B]\in\hat V, \verge[B]=[A]}[B] = \C\cV(A)
$$
is a decomposition of M into a direct sum of $q$ many irreducible $\C U$-modules.  
\item[3)] If $U$ is of type $\mathfrak{C}_n$ and $j=\bar i$, then $M$ is irreducible and occurs with multiplicity $1$ in $\C\cO_A$. 

\end{itemize}
\end{Theorem}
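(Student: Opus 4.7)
The proof splits naturally into the three cases, with common tools being Frobenius reciprocity, the dimension counts \ref{eldegree}--\ref{elOrbSize}, Lemma \ref{ElHoms} (producing nonzero homs from $M$ into orbit modules), Lemma \ref{mincond} (orthogonality for Case 2), and Mackey's irreducibility criterion.

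For Case 1, both $M$ and $\C\cO_A$ have dimension $q^{j-i-1}$ by \ref{eldegree} and \ref{elOrbSize}, and Lemma \ref{ElHoms} furnishes a nonzero $U$-hom $\phi:M\to\C\cO_A$. The plan is to show $M$ is irreducible via Mackey's criterion: for every $g\in U\setminus U_{i,j}$ the characters ${}^{g}\chi^{i,j}_\alpha$ and $\chi^{i,j}_\alpha$ differ on $U_{i,j}\cap gU_{i,j}g^{-1}$. This reduces to a commutator computation: for $g=x_{kl}(\mu)$ with $(k,l)\in\varrho_{i,j}$ (i.e.\ a root position in the ``deleted'' part of row $i$ or row $\bar j$), conjugation of $x_{ij}(\lambda)$ by $g$ introduces nontrivial terms that alter the $(i,j)$-entry, breaking the character equality. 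Irreducibility of $M$ then forces the nonzero $\phi$ to be injective, and the dimension equality upgrades it to an isomorphism.

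For Case 2, I would apply Lemma \ref{ElHoms} separately to each core $A_\beta$, $\beta\in\F_q$, with the vector $x_\beta=\sum_{[B]\in\cO^{i,j}_{A_\beta}}[B]$ in place of $x$, yielding nonzero homs $\phi_\beta:M\to\C\cO_{A_\beta}$. A parallel Mackey argument shows each $\C\cO_{A_\beta}$ is irreducible (the minor condition $(i,\bar j)$ is harmless, since $(i,\bar j)\in\varrho_{i,j}$ for types $\mathfrak B_n,\mathfrak D_n$), so $\phi_\beta$ is surjective and $\C\cO_{A_\beta}$ is an irreducible constituent of $M$. By Lemma \ref{mincond} these constituents are pairwise non-isomorphic (otherwise they would share an irreducible). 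Their dimensions sum to $q\cdot q^{j-i-2}=q^{j-i-1}=\dim M$, which rules out any further constituent and yields $M\cong\bigoplus_{\beta\in\F_q}\C\cO_{A_\beta}=\C\cV(A)$. For Case 3, the same Mackey argument makes $M$ irreducible with $\dim M=q^{n-i}$; its multiplicity in $\C\cO_A$ equals $\dim\Hom_U(M,\C\cO_A)$, which by Frobenius equals the multiplicity of $\chi^{i,\bar i}_\alpha$ in $\RRes^U_{U_{i,\bar i}}\C\cO_A$, and by Lemma \ref{ijorbits}(c) only the single $U_{i,\bar i}$-orbit $\cO_A^{i,\bar i}$ contributes a $\chi^{i,\bar i}_\alpha$-eigenvector (its orbital sum), giving multiplicity $1$.

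The main obstacle is the Mackey irreducibility verification, which requires case-split conjugation calculations in $U$ by Lie type $\mathfrak B_n,\mathfrak C_n,\mathfrak D_n$ and by the location of $(i,j)$ (in $\tril$, in $\trir$, or on the antidiagonal). The subtle contrast between Cases 1 and 3 (irreducibility of $M$) and Case 2 (splitting into $q$ irreducibles) traces back to whether conjugators $g\in U$ exist which preserve $\chi^{i,j}_\alpha$ on the intersection $U_{i,j}\cap gU_{i,j}g^{-1}$: in the orthogonal types and for right main conditions, the minor condition $(i,\bar j)$ lies in $\varrho_{i,j}$ and conjugators acting through row $\bar j$ produce nontrivial ``minor'' contributions that are $\chi^{i,j}_\alpha$-invariant, accounting for the factor of $q$. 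Once the Mackey step is handled, the remaining steps (dimension counting and Frobenius reciprocity) are routine.
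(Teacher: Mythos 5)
Your skeleton for extracting the isomorphisms coincides with the paper's: a nonzero homomorphism from $M$ into each $\C\cO_{A_\beta}$ via \ref{ElHoms}, pairwise orthogonality via \ref{mincond}, and the dimension counts \ref{eldegree}, \ref{elOrbSize} to force equality. The difference, and the gap, lies in where the structural information about $M$ comes from. The paper does not prove that $M$ is irreducible in cases 1) and 3), nor that it splits into exactly $q$ pairwise non-isomorphic irreducibles in case 2); it imports this wholesale from [\cite{andreneto1}, 2.1] and, for the multiplicity statement of case 3), from [\cite{andreneto2}, 2.6]. You propose to re-derive these facts by Mackey's criterion and explicitly defer the computation as ``the main obstacle''; as written, the crux of the argument is therefore missing. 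Note also that your logic in case 2) runs in the opposite direction from the paper's: there, irreducibility of the modules $\C\cO_{A_\beta}$ is an \emph{output} (obtained from the known decomposition of $M$ into $q$ irreducibles together with the dimension count), whereas you need it as an \emph{input}.

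That input is where your sketch would run into concrete trouble with the tools available here. Mackey's criterion applies to $M=\Ind_{U_{i,j}}^U(\chi^{i,j}_\alpha)$, but to run a ``parallel Mackey argument'' on $\C\cO_{A_\beta}$ you must first realize the orbit module as $\Ind_{\Stab_U[A_\beta]}^U(\mu)$ for an explicitly known stabilizer and linear character $\mu$. For $\beta\neq 0$ the character $[A_\beta]$ is a core but not a verge, and the paper only establishes the equality of orders $|\Stab_U[A_\beta]|=|U_{J(A_\beta)}|$ in \ref{auxorderlemma}; the explicit determination of such stabilizers is deferred to [\cite{GJD2}]. Similarly, in case 3) your multiplicity-one claim needs more than \ref{ijorbits}: that lemma identifies the single $U_{i,\bar i}$-orbit $\cO_A^{i,\bar i}$ on which $U_{i,\bar i}$ acts through $\chi^{i,\bar i}_\alpha$, but it says nothing about the remaining $U_{i,\bar i}$-orbits inside $\cO_A$ (of which there are many, since $|\cO_A|=q^{2(n-i)}$ while $|\cO_A^{i,\bar i}|\leq q^{n-i}$); one must still show that none of them affords $\chi^{i,\bar i}_\alpha$, i.e.\ that this character disagrees on the relevant point stabilizers with the monomial coefficient characters. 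If you want a self-contained proof rather than the citations, the cleanest route is to compute $\langle\xi^{i,j}_\alpha,\xi^{i,j}_\alpha\rangle$ by Mackey's intertwining number formula, which yields $1$ in cases 1) and 3) and $q$ in case 2), and then run the dimension-count argument exactly as you and the paper both do.
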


\begin{proof}
By [\cite{andreneto1}, 2.1]  $M$ is irreducible in cases 1) and 3) and decomposes into a sum of $q$ many irreducible pairwise non isomorphic $\C U$-modules in case 2). Now \ref{ElHoms} implies, that $M$ and $\C\cO_{A_\beta}$ have a composition factor in common for all $\beta\in\F_q$. Consequently $M$ is a submodule of $\C\cO_A = \C\cO_{A_0}$ in cases 1) and 3), and hence $M\cong\C\cO_A$ in case 1) by \ref{elOrbSize}. This proves case 1).

If $U$ is of type $\mathfrak{B}_n$ or $\mathfrak{D}_n$ and $(i,j)\in\trir$, then \ref{mincond} implies that the common composition factors $S_\beta$ of $M$ and $\C\cO_{A_\beta}$ for $\beta\in\F_q$  are pairwise non isomorphic and we conclude $M\cong \oplus_{\beta\in\F_q}S_\beta$. Again by \ref{elOrbSize} we find $\C\cO_{A_\beta}\cong S_\beta$ is irreducible and case 2) follows. Finally case 3) follows directly from [\cite{andreneto2},2.6], (indeed there are the other irreducible constituents of $\C\cO_A$ determined as well).
\end{proof}

\begin{Remark}\label{HimLeMag} In [\cite{HLM}] by different methods some irreducible characters of $p$-Sylow subgroups of untwisted finite Chevalley groups are investigated in terms of certain families. These are labeled by single positive roots of the Lie type in question. Each family of single root characters contains a collection of characters of minimal degree, called midafi.  In case of $p$-Sylow subgroups $U$ of classical untwisted type these are afforded  by our orbit modules generated by cores with one main condition corresponding to the single root there. 
\end{Remark}

As an immediate consequence in view of \ref{compTypeA} we obtain:

\begin{Cor}\label{ElCharResTypA} The elementary character $\xi^{i,j}_\alpha$ is  afforded by the restriction to $U$ of the $\widetilde U$-orbit module $\C\cV(A)$ with $A= \alpha e_{ij}$, except if $U$ is of  type $\mathfrak{C}_n$ and $j=\bar i$, where  $\xi^{i,\bar i}_\alpha$ is an irreducible constituent hereof.
\hfill$\square$
\end{Cor}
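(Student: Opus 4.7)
The plan is to reduce the corollary directly to Theorem \ref{identificationANorbits} by describing $\C\cV(A)\big|_U$ as an explicit direct sum of $\C U$-orbit modules. By Definition \ref{vergemodule} together with Remark \ref{compTypeA}, the space $\C\cV(A)$ is the Andr\'e--Yan $\widetilde U$-orbit module $\tilde\cO_A$, and as a $\C U$-module it already comes with the internal decomposition $\C\cV(A)\big|_U = \bigoplus_{[B]:\,\verge(\cO_B)=\verge(A)}\C\cO_B$. Hence the restriction to $U$ is formal, and the task reduces to enumerating the $U$-orbits inside $\cV(A)$ and matching the outcome against the three cases of Theorem \ref{identificationANorbits}.

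To enumerate, I will use that $[B]\in\cV(A)$ iff $\mc(B) = \{(i,j)\}$ and $B_{ij}=\alpha$, which forces $\supp(B)$ to lie in row $i$ with rightmost nonzero entry at $(i,j)$. This gives $|\cV(A)| = q^r$, where $r$ is the number of positions $(i,k)\in\pUP$ with $i<k<j$. Comparing this count with the orbit sizes from \ref{elOrbSize} yields three subcases: in case~1 of Theorem \ref{identificationANorbits} (so $(i,j)\in\tril$, or type $\fC_n$ with $(i,j)\in\UP$) we have $|\cV(A)| = |\cO_A|$, so $\cV(A) = \cO_A$ is a single $U$-orbit; in case~2 (types $\fB_n,\fD_n$ with $(i,j)\in\trir$) we obtain $\cV(A) = \bigsqcup_{\beta\in\F_q}\cO_{A_\beta}$ with $A_\beta = \alpha e_{ij}+\beta e_{i\bar j}$, partitioned by the value at the minor condition; and in case~3 (type $\fC_n$ with $j=\bar i$) we again have $\cV(A)=\cO_A$, of size $q^{2(n-i)}$.

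Feeding these decompositions into the expression for $\C\cV(A)\big|_U$ and invoking Theorem \ref{identificationANorbits} completes the proof. In case~1 we get $\C\cV(A)\big|_U = \C\cO_A \cong M$; in case~2 we get $\C\cV(A)\big|_U = \bigoplus_{\beta\in\F_q}\C\cO_{A_\beta} \cong M$; and in case~3 we get $\C\cV(A)\big|_U = \C\cO_A$, which by Theorem \ref{identificationANorbits}(3) contains the irreducible module $M$ affording $\xi^{i,\bar i}_\alpha$ with multiplicity one, so $\xi^{i,\bar i}_\alpha$ is indeed an irreducible constituent.

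The main point that requires care is the size bookkeeping for type $\fC_n$: Definition \ref{coredef} treats minor conditions there as places rather than core positions, which accounts for the extra factor of $q$ compared with types $\fB_n,\fD_n$ and explains why the antidiagonal case $j=\bar i$ must be singled out (the orbit $\cO_A$ is then too large, namely of degree $(\deg\xi^{i,\bar i}_\alpha)^2$, and the elementary character appears only as a constituent). Beyond that careful cross-check against \ref{elOrbSize} and \ref{placesElOrb}, the rest is an immediate consequence of Theorem \ref{identificationANorbits} and Remark \ref{compTypeA}.
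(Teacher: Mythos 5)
Your proposal is correct and follows essentially the same route as the paper, which presents the corollary as an immediate consequence of Theorem \ref{identificationANorbits} together with Remark \ref{compTypeA}; your cardinality bookkeeping via \ref{elOrbSize} simply makes explicit the identification of $\cV(A)$ with the relevant union of $U$-orbits that the paper leaves to the reader (and that is already stated for case 2 inside Theorem \ref{identificationANorbits} itself).
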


Next we inspect the notion of basic subsets of positive roots as defined in [\cite{andreneto1}, p.396] in view of our setting. 

\begin{Defn}\label{basicset} A subset $\cD\subseteq\UR=\tilde \Phi^+ = \{(i,j)\,|\,1\leq i<j\leq N\}$ of the set $\UR$ of positions to the north of the diagonal is called {\bf basic}, if it satisfies:
\begin{itemize}
\item[i)] $(i,j)\in\cD$ if and only if $(\bar j, \bar i)\in \cD$, (so $\cD$ is mirror symmetric with respect to the antidiagonal).
\item[ii)] Each row and each column of $\UR$ contains at most one element of $\cD$.  \hfill$\square$
\end{itemize}
 
\end{Defn}

Note that i) ensures, that $\cD\cap\pUP$ already determines $\cD$ completely and hence $\cD$ can be interpreted as set of positive roots of type $\mathfrak{B}_n, \mathfrak{C}_n$ and $\mathfrak{D}_n$ respectively. Obviously if $\cD$ satisfies ii) then $\cD\cap\pUP$ does as well, but condition ii) for all of $\cD$ (in conjunction with condition i)) is stronger: It implies as well, that on the arm  $\cA(i,\bar j) = \{(j,k)\in\pUP\,|\, j<k\leq\bar j\}$ as defined in \ref{lowerhook}, there is no further position contained in $\cD$, if $(i,\bar j)\in\cD\cap\trir$.

\begin{Example}\label{nonmainsep} If $1<j<k<n$ and $M_1=(i,\bar j), M_2=(j,\bar k)$ are contained in $\cD$ then $\bar M_2=(k,\bar j)$ is contained in  $\cD$ as well as $M_1$, contradicting ii):
\end{Example}

\begin{center}
\begin{picture}(120,120)
\put(0,120){\line(1,0){120}}
\put(120,0){\line(0,1){120}}
\put(0,120){\line(1,-1){120}}
\put(45,75){\circle*{3}} 
\put(35,70){\makebox{$k$}}
\put(73,120){\makebox{$\bar k$}}
\put(30,90){\circle*{3}} 
\put(20,85){\makebox{$j$}}
\put(87,120){\makebox{$\bar j$}}
\put(15,105){\circle*{3}} 
\put(5,100){\makebox{$i$}}
\put(84,101){\makebox{$M_1$}}
\put(70,87){\makebox{$M_2$}}
\put(84,72){\makebox{$\bar M_2$}}

\dottedline{2}(30,90)(90,90)
\dottedline{2}(90,120)(90,30)
\put(60,60){\line(1,1){60}}

\end{picture}
\end{center}

Since $\cD\cap\pUP$ satisfies ii), it can be interpreted as main conditions of characters $[A]\in\hat V$. Then condition ii) for $\cD$ in conjunction with condition i) translates into the requirement for $\main(A)=\cD\cap\pUP$ to be main separated according to the following definition:

\begin{Defn}\label{mainsep}
 The main conditions $\main(A)$ of a character $[A]\in \hat V$  are called {\bf separated} if 
${\Limb}(A)\cap \main(A)=\emptyset$. If so, we call $[A]$ and $\cO_A$ {\bf main separated}.\hfill$\square$
\end{Defn}

 Note, that if $[A]$ is main separated, then $[A]$ is staircase as well, since a character $[A]$ which is not staircase, contains at least one column with  two main conditions, such that the lower one is on the leg of the higher one or is on the antidiagonal with the higher one being a right main condition. For staircase characters $[A]\in\hat V$ main separated requires in addition, that on the arms to right main conditions there are no further main conditions. 
 
 \begin{Defn}\label{ANsupchar}(see [\cite{andreneto1}]).
 Given a nonempty basic subset $\cD$ of $\UR$ and a map $\Phi:\cD\cap\pUP\rightarrow \F_q^*$ the Andr\'{e}-Neto supercharacter $\xi_{\cD,\Phi}$ is defined as follows: 
$$
\xi_{\cD,\Phi} = \prod_{(i,j)\in\cD\cap\pUPs}\xi^{i,j}_{\Phi(i,j)},
$$
where the elementary character $\xi^{i,j}_{\Phi(i,j)}$ has been defined above. If $\cD$ is the empty set, the corresponding supercharacter is defined to be the trivial one.
\hfill$\square$ \end{Defn}

We set $B(i,j) = \Phi(i,j)e_{ij}\in V_{\URs}$ for $(i,j)\in\cD$. Moreover we define 
$$
\widetilde\cO_C = \{[C].u\,|\,u\in\widetilde U\}\quad\text{for}\,\, [C] \in \hat V_{\pUPs},
$$ 
identifying the $\C\widetilde U$-modules $\hat V_{\pUPs}$ and $\hat{\widetilde V}_{\pUPs}$ as in \ref{compTypeA}. 

If $U$ is not of type $\mathfrak{C}_n$ or $(i,j)\in\UP$ for all $(i,j)\in\cD\cap\pUP$ we obtain applying \ref{ElCharResTypA},  that  $\xi_{\cD,\Phi}$ is afforded by the restriction to $U$ of the $\C\widetilde U$-module 
\begin{equation}\label{superchartensorproduct}
\bigotimes_{(i,j)\in\cD\cap\pUPs}\C\widetilde{\cO}_{B(i,j)},
\end{equation}
which is 
\begin{equation}\label{vergemoddec}
 \bigotimes_{(i,j)\in\cD\cap\pUPs}\C\cV(B(i,j)).
\end{equation}
 In any case $\xi_{\cD,\Phi}$  is afforded by a direct summand of the tensor product \ref{vergemoddec}. Wee call the $\C\widetilde U$-module of \ref{superchartensorproduct} {\bf Andr\'{e}-Neto module}. 

One checks easily that the condition ii) of \ref{basicset} says that the hypothesis of [\cite{yan}, 6.2] is satisfied. Applying this we have shown that  the 
Andr\'{e}-Neto module \ref{superchartensorproduct} is an Andr\'{e}-Yan  $\widetilde U$-orbit module and we can describe its restriction to $U$:

\begin{Theorem}\label{DecANorbits} Let $\emptyset\not=\cD\subseteq \UR$ be a basic set and let $\Phi:\cD\cap\pUP\rightarrow\F_q^*$ be a map. Set 
$$
A=A(\cD,\Phi) = \sum_{(i,j)\in\cD\cap\pUPs}\Phi(i,j)e_{ij}\in V_{\pUPs}\leq V_{\URs}.
$$
If $j\not=\bar i$ 
for all $(i,j)\in\cD\cap\pUP$, the Andr\'{e}-Neto supercharacter $\xi_{\cD,\Phi}$ is afforded by the the restriction to $U$ of the $\widetilde U$-orbit module $\C\widetilde{\cO}_A$. If $(i,\bar i)\in\cD$ for some $1\leq i\leq n$, then $\xi_{\cD,\Phi}$ is afforded by a direct summand hereof. Moreover, $[A]\in\hat V$   is a main separated verge and $\widetilde{\cO}_A$ decomposes under the action of  $U\leq \widetilde U$ into the disjoint union of all $U$-orbits of the form $\cO_B$ with $[B]\in\hat V$ a main separated core with $\verge(B)=A$. Thus 
$$
\Res ^{\widetilde U}_U (\C\widetilde {O}_A) = \C\cV(A) = \bigoplus_{\text{cores\,}[B]\in\hat V\atop \verge(B) = A} \C\cO_B
$$
is a direct sum decomposition of the Andr\'{e}-Neto module attached to the supercharacter $\xi_{\cD,\Phi}$  into $U$-orbit modules. \hfill$\square$
\end{Theorem}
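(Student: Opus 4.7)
My plan is to dispatch the theorem in three stages: establish that $[A]$ is a main separated verge, identify the Andr\'e--Neto tensor module with $\C\widetilde{\cO}_A$ via Yan's orbit theorem, and then decompose the restriction to $U$ using Theorem~\ref{classifybycore}.

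First, I would observe that $[A]=[A(\cD,\Phi)]$ is a verge character essentially tautologically: condition~ii) of Definition~\ref{basicset} forces $\supp(A)=\cD\cap\pUPs$ to have at most one position per row, so the unique nonzero entry in row~$i$ is automatically the rightmost, giving $\main(A)=\supp(A)$ and hence $A=\verge(A)$. For main separation I would check $\Limb(A)\cap\main(A)=\emptyset$ by case distinction. A leg position $(p,j)\in\cL(i,j)\cap\main(A)$ would place two distinct elements of $\cD$ in column~$j$, contradicting~ii). An arm position $(\bar l,q)\in\cA(k,l)\cap\main(A)$ with $(k,l)\in\rmc(A)$ lies in $\cD\cap\pUPs$, and its mirror $(\bar q,l)$, forced by~i) to be in $\cD$, would share column~$l$ with $(k,l)$, again violating~ii) unless $\bar q=k$; but then $(p,q)=(\bar l,\bar k)$ is the mirror of $(k,l)$, which off the antidiagonal does not even lie in $\pUPs$, while on the antidiagonal for type~$\mathfrak{C}_n$ the arm at $(k,\bar k)$ is located in row~$k$ (see \ref{lowerhook}), and the same-row version of the column argument settles this residual case.

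Second, the discussion preceding the theorem, together with Corollary~\ref{ElCharResTypA}, identifies $\xi_{\cD,\Phi}$ as being afforded by (a direct summand of) the tensor product $\bigotimes_{(i,j)\in\cD\cap\pUPs}\C\widetilde\cO_{B(i,j)}$. The basic-set condition~ii) is precisely the hypothesis of~[\cite{yan}, 6.2], so I would invoke Yan's theorem to conclude that this tensor product is itself a single Andr\'e--Yan $\widetilde U$-orbit module. Comparing verges (the only nonzero entries of $A$ occur at positions $(i,j)\in\cD\cap\pUPs$ with prescribed values $\Phi(i,j)$), this orbit module is $\C\widetilde\cO_A=\C\cV(A)$, the last equality by Definition~\ref{vergemodule}.

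Finally, to decompose $\Res^{\widetilde U}_U\C\widetilde\cO_A=\C\cV(A)$ into $U$-orbit modules I would note that every $[B]\in\cV(A)$ satisfies $\verge(B)=A$, hence $\main(B)=\main(A)$, so $[B]$ inherits main separation (in particular staircaseness) from $[A]$. Theorem~\ref{classifybycore} then assigns to each $U$-orbit $\cO_B\subseteq\cV(A)$ a unique core character, while conversely every main separated core $[C]$ with $\verge(C)=A$ manifestly lies in $\cV(A)$ and defines a distinct $U$-orbit $\cO_C\subseteq\cV(A)$. This bijection yields the claimed direct sum decomposition. I expect the main obstacle to be the invocation of~[\cite{yan}, 6.2] in step two: translating between Yan's external description in $\widetilde U$ and our internal bookkeeping of $\hat V=\hat V_{\pUPs}$ (via \ref{compTypeA}) requires care, particularly to ensure the tensor product of single-root orbit modules is identified with the single orbit module $\C\widetilde\cO_A$ exactly as claimed. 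The main-separation case analysis in step one, though delicate at the antidiagonal in type~$\mathfrak{C}_n$, is a bounded bookkeeping exercise.
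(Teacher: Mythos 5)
Your proposal is correct and follows essentially the same route as the paper, which proves this theorem through the discussion preceding it: Corollary \ref{ElCharResTypA} plus [\cite{yan}, 6.2] (via condition ii) of \ref{basicset}) to identify the Andr\'{e}-Neto tensor module with the single $\widetilde U$-orbit module $\C\widetilde\cO_A=\C\cV(A)$, the equivalence of conditions i)--ii) with main separation of $\main(A)$ (your mirror-argument case analysis makes explicit what the paper illustrates in Example \ref{nonmainsep}), and Theorem \ref{classifybycore} to split $\C\cV(A)$ into $U$-orbit modules indexed by the main separated cores with verge $A$.
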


 Rewriting tensor products in \ref{superchartensorproduct} using  [\cite{yan}, 6.2] into $\widetilde U$-orbit modules works for any  $\cD\subseteq\UR$ satisfying condition ii) of \ref{basicset}. The requirement that  $\cD$ satisfies  in addition condition i) of \ref{basicset} is needed to show, that the Andr\'{e}-Neto supercharacters of $U$ are pairwise orthogonal.
 
 In [\cite{GJD2}] we determine the stabilizers in $U$ of main separated cores and prove, that every irreducible $\C U$-module is constituent of some main separated orbit module. Moreover we show, that the $U$-orbit modules $\cO_A$ for main separated cores $[A]\in\hat V$ with $\supp(A)\subseteq \UP$ are either isomorphic or afford orthogonal characters. In type $\mathfrak{C}_n$ if $\supp(A)\not\subseteq\UP$, that is, if $(i,\bar i)\in\main(A)$ for some $1\leq i< n$ this is not true in view of part 3) of theorem \ref{identificationANorbits}. This case requires a modification of the original setting and is presently under investigation.


\providecommand{\bysame}{\leavevmode ---\ }
\providecommand{\og}{``} \providecommand{\fg}{''}
\providecommand{\smfandname}{and}
\providecommand{\smfedsname}{\'eds.}
\providecommand{\smfedname}{\'ed.}
\providecommand{\smfmastersthesisname}{M\'emoire}
\providecommand{\smfphdthesisname}{Th\`ese}

\end{document}